\newtheorem{theorem}{Theorem}[section]
\newtheorem{lemma}[theorem]{Lemma}
\newtheorem{corollary}[theorem]{Corollary}
\newtheorem{proposition}[theorem]{Proposition}
\newtheorem*{coro*}{Corollary}
\theoremstyle{definition}}
\theoremstyle{definition}\newtheorem{example}[theorem]{Example}}
\theoremstyle{definition}
\newtheorem{definition}[theorem]{Definition}
\newtheorem{question}[theorem]{Question}
\newtheorem{fact}[theorem]{Fact}
\newtheorem{claim}[theorem]{Claim}
\newtheorem*{remark*}{Remark}
\theoremstyle{definition}\newtheorem{remark}[theorem]{Remark}}
\def\D{\ensuremath{\mathbb D}}
\def\K{\ensuremath{\mathbb K}}
\def\T{\ensuremath{\mathbb T}}
\def\R{\ensuremath{\mathbb R}}
\def\Z{\ensuremath{\mathbb Z}}
\def\C{\ensuremath{\mathbb C}}
\def\N{\ensuremath{\mathbb N}}
\def\sp{{\rm span}}
\def\bth{\begin{theorem}}
\def\blm{\begin{lemma}}
\def\bpr{\begin{proposition}}
\def\bpf{\begin{proof}}
\def\epf{\end{proof}}
\def\epr{\end{proposition}}
\def\elm{\end{lemma}}
\def\eth{\end{theorem}}
\def\bco{\begin{corollary}}
\def\eco{\end{corollary}}
\def\be{\begin{enumerate}}
\def\ee{\end{enumerate}}
\def\bea{\begin{enumerate}[\rm (a)]}
\def\beun{\begin{enumerate}[\rm (1)]}
\def\bei{\begin{enumerate}[\rm (i)]}
\def\sgn{{\rm sgn}}
\def\llb{\llbracket}
\def\rrb{\rrbracket}
\def\bdf{\begin{definition}}
\def\edf{\end{definition}}
\newcommand{\pss}[2]{\ensuremath{{\langle #1,#2\rangle}}}
\newcommand{\ba}[1]{\overline{#1}}
\newcommand{\bbu}{\mathcal{B}_{1}}
\newcommand{\ide}[2]{\mathbf{i}_{\,#1\!,\,#2}}
\newcommand{\gd}{G_{\delta }}
\newcommand{\bbx}{{\mathcal{B}}_{1}(X)}
\newcommand{\wot}{\texttt{WOT}}
\newcommand{\sot}{\texttt{SOT}}
\newcommand{\sote}{\texttt{SOT}\mbox{$^{*}$}}
\newcommand{\sotb}{\texttt{SOT}\mbox{$_{*}$}}
\newcommand{\bx}{{\mathcal B}(X)}
\newcommand{\bu}{\mathcal{B}_{1}}
\newcommand{\lp}{\ell_{p}}
\author[S. Grivaux]{Sophie Grivaux}
\address[S. Grivaux]{CNRS, Univ. Lille, UMR 8524 - Laboratoire Paul
Painlev\'e, F-59000 Lille, France}
\email{sophie.grivaux@univ-lille.fr}
\author[\'{E}. Matheron]{\'{E}tienne Matheron}
\address[\'{E}. Matheron]{Univ. Artois, UR 2462 - Laboratoire de Math\'{e}matiques de Lens (LML)\\ F-62300 Lens, France}
\email{etienne.matheron@univ-artois.fr}
\author[Q. Menet]{Quentin Menet}
\address[Q. Menet]{Service de Probabilit\'e et Statistique, D\'epartement de Math\'ematique\\ Universit\'{e} de Mons\\ Place du Parc 20\\ 7000 Mons, Belgium}
\email{quentin.menet@umons.ac.be}
\title[Generic sets of $\ell_p\,$-$\,$contractions]{Generic properties of $\ell_p\,$-$\,$contractions and\\ similar operator topologies}
\begin{document}

\begin{abstract}
%
If $X$ is a separable reflexive Banach space, there are several natural Polish topologies on $\bbx$, the set of contraction operators on $X$ (none of which being clearly ``more natural'' than the others), and hence several \textit{a priori} different notions of genericity -- in the Baire category sense -- for properties of contraction operators.   So it makes sense to investigate to which extent the generic properties, \textit{i.e.} the comeager sets, really depend on the chosen topology on $\bbx$. In this paper, we focus on $\ell_p\,$-$\,$spaces, $1<p\neq 2<\infty$. We show that for some pairs of natural Polish topologies on $\mathcal B_1(\ell_p)$, the comeager sets are in fact the same; and our main result asserts that  for $p=3$ or $3/2$ and in the real case, \emph{all} topologies on $\mathcal B_1(\ell_p)$ lying between the Weak Operator Topology and the Strong$^*$ Operator Topology share the same comeager sets. Our study relies on the consideration of continuity points of the identity map for two different topologies on $\mathcal{B}_1 (\ell_p)$.  The other essential ingredient in the proof of our main result is a careful examination of norming vectors for finite-dimensional contractions of a special type.%
%
\end{abstract}

\keywords{Operator topologies, $\ell_p\,$-$\,$spaces, typical properties, comeager sets, points of continuity, norming vectors}
\subjclass{46B25, 47A15, 54E52, 47A16}
 \thanks{This work was supported in part by
the project FRONT of the French
National Research Agency (grant ANR-17-CE40-0021) and by the Labex CEMPI (ANR-11-LABX-0007-01). The third author is a Research Associate of the Fonds de la Recherche Scientifique - FNRS}
\thanks{The second author is indebted to Miguel Mart\'in, Javier Meri and Debmalya Sain for very pleasant and interesting discussions regarding norming vectors and the contents of the present paper.}

\maketitle

\section{Introduction and main results}\label{Intro}

\subsection{Polish spaces of operators and typical properties}\label{Subsection 1.1}
Let $X$ be a real or complex infinite-dimensional separable Banach space.  
Denote by $\mathcal B(X)$ the space of all bounded operators on $X$, and by $\bbx$ the unit ball of $\mathcal B(X)$, \mbox{\it i.e.} the set of {contraction operators} on $X$. 
In this paper, we will be interested in \emph{typical properties} of elements of \(\bu(X)\) in the sense of Baire Category. More precisely, the setup is the following: we endow the ball \(\bu(X)\) with a topology \(\tau \) which turns it into a \emph{Polish} (i.e. separable and completely metrizable) space. A property (P) of elements of \(\bu(X)\) is said to be \emph{typical for} \(\tau \),  or \(\tau \)-\emph{typical} if the set of operators satisfying (P) is comeager in \((\bu(X),\tau )\), i.e. contains a dense \(G_{\delta }\) subset of \((\bu(X),\tau )\). A typical property is thus a property that is satisfied by ``quasi-all'' contractions (in the Baire Category sense). We will mainly focus on the case where \(X\) is the real or complex \(\lp\,\)-$\,$space, \(1\le p<\infty\).
\par\smallskip
The ball $\bbx$ is usually not a Polish space when endowed with the operator norm topology (as it is not separable), and some weaker topologies are to be considered in order to turn $\bbx$ into a Polish space. We will consider in this paper four natural topologies on $\bx$: the \emph{Strong Operator Topology} (\sot), the \emph{Weak Operator Topology} (\wot), the \emph{Strong$^*$ Operator Topology} (\sote), and a topology that might be called the ``{Dual Strong Operator Topology}'', which we denote by \sotb. Recall that \sot\ is the topology of pointwise convergence, that \wot\ is the topology of weak pointwise convergence, and that \sote\ is the topology of pointwise convergence for operators and their adjoints. As to \sotb, it is the topology of pointwise convergence for the adjoints. In other words, if $(T_i)$ is a net in $\mathcal B(X)$ and if $T\in\mathcal B(X)$, then 
\begin{align*}
T_i\xrightarrow{\sot}T&\iff T_ix\xrightarrow{\Vert\,\cdot\,\Vert} Tx\quad\hbox{for all $x\in X$},\\
T_i\xrightarrow{\wot}T&\iff T_ix\xrightarrow{w} Tx\quad\hbox{for all $x\in X$},\\
T_i\xrightarrow{\sote}T&\iff T_i\xrightarrow{\sot}T\quad{\rm and}\quad T_i^*\xrightarrow{\sot}T^*,\\
T_i\xrightarrow{\sotb}T&\iff  T_i^*\xrightarrow{\sot}T^*.
\end{align*}

\noindent The space $\mathcal B(X)$ is not a Baire space for any of these topologies, but its unit ball $\bbx$ is nicer: it is well known that since $X$ is separable, $(\bbx,\sot)$ is Polish; that if $X^*$ is separable then $(\bbx,\sote)$ is Polish; and that if $X$ is reflexive then $(\bbx,\wot)$ is compact and metrizable (hence Polish) and $(\bbx,\sotb)$ is Polish. Note also that obviously $\sote=\sot\vee\sotb$, \mbox{\it i.e.} \sote\ is the coarsest topology which is finer than both \sot\ and \sotb, and that $\wot\subset\sot\cap\sotb$. The topology \sotb\ may look a bit artificial, but our results will show that it is indeed natural to consider it.
\par\smallskip
The study of typical properties of  contractions was initiated by Eisner \cite {E} and Eisner-M\'atrai  \cite{EM} in a purely Hilbertian setting, and further developed in \cite{GMM1} and \cite{GMM2}.
The setting of the monograph \cite{GMM1} is also mainly Hilbertian, and it is focused on typical properties of operators connected to linear dynamics (existence of orbits with various properties, existence of non-trivial measures...). The works \cite{GMM2} and \cite{GM} are set in the broader context of (complex) \(\lp\,\)-$\,$spaces, and centre around the following question: is it true that a typical \(T\in\bu(\lp)\) has a non-trivial invariant subspace? The underlying motivation of this question is, of course, the famous  Invariant Subspace Problem, which asks (for a given Banach space $X$) whether any bounded operator \(T\) on \(X\) admits a closed subspace \(M\) with \(M\neq\{0\}\) and 
\(M\neq X\) such that \(T(M)\subseteq M\). The answer is known to be negative in general (\cite{Enflo}, \cite{R1}), and examples of operators without non-trivial invariant subspaces have been constructed by Read on \(\ell_{1}\) or \(c_{0}\) (\cite{R2}, \cite{R3}, see also \cite{GR} for a unifying approach to such constructions). Let us also mention the existence, proved by Argyros and Haydon in \cite{AH}, of separable infinite-dimensional spaces \(X\) on which every operator does have a non-trivial invariant subspace. Despite considerable efforts, no answer (positive or negative) to the Invariant Subspace Problem have been obtained so far for any reflexive $X$, which was a motivation for studying its \textit{a priori} more tractable ``generic'' version in \cite{GMM2}.
\par\smallskip 
One would rather naturally expect typical properties of contractions \(T\in(\bu(X),\tau )\) to depend heavily on the Polish topology $\tau$ under consideration. This intuition is quite right in the Hilbertian  setting. Indeed, in the case of the complex \(\ell_{2}\,\)-$\,$space, it is proved in \cite{E} that a typical \(T\in(\bbu(\ell_2),\wot)\) is unitary, whereas in \cite{EM} the following  surprising result is obtained: a typical \(T\in(\bbu(\ell_2),\sot)\) is unitarily equivalent to $B_\infty$, the backward shift of infinite multiplicity acting on \(\ell_{2}(\Z_{+},\ell_{2})\). In particular, the typical situation for $\wot$ (resp. $\sot$) is that $T$ (resp. $T^*$) is an isometry. As to the topology \sote, it is proved in \cite{GMM1} that  a typical $T\in(\mathcal B_1(\ell_2), \sote)$ is such that  $2T$ and $2T^*$ are hypercyclic (\textit{i.e.} they admit vectors with dense orbit), so that $T$ and $T^*$ are very far from being isometries.
\par\smallskip
It is clear from these results that a typical \(T\in\bu(\ell_{2})\) for any one of the topologies \wot, \sot, \sotb\ has a non-trivial invariant subspace. It is much less clear that this remains true for the topology \sote; but it is indeed the case thanks to a  deep result of Brown, Chevreau and Pearcy \cite{BCP}, according to which any \(T\in\bu(\ell_{2})\) whose spectrum $\sigma(T)$ contains the unit circle \(\T\) has a non-trivial invariant subspace. As it can be shown (see \cite{EM}) that a typical \(T\in(\bu(\ell_{2}),\sote)\) is such that \(\sigma (T)=\ba{\,\D}\), the closed unit disk in $\C$, it follows that an \sote-$\,$typical \(T\in\bu(\ell_{2})\) does have a non-trivial invariant subspace. 
\par\smallskip
Much less is known on (complex) \(\lp\,\)-$\,$spaces: while it is still true that an \sot$\,$-$\,$typical \(T\in\bu(\ell_{1})\) has a non-trivial invariant subspace (see \cite{GMM2}), the question remains widely open for typical operators \(T\in\bu(\lp)\), \(1<p\neq 2<\infty\), whatever the topology we consider on \(\bu(\lp)\) among our favourite ones. However, at least for \(p>2\), an interesting link between typical properties of contractions on \(\lp\) for \emph{two different topologies} is uncovered in \cite{GMM2}: if $p>2$, then any comeager subset in \((\bu(\lp),\sote)\) is also comeager in \((\bu(\lp),\sot)\); in other words, a property of $\ell_p\,$-$\,$contractions which is typical for \sote\ is also typical for \sot. Since it is not too hard to show that an \sote-$\,$typical \(T\in\bu(\lp)\) is such that \(2T^{*}\) is hypercyclic (which implies in particular that $T$ has no eigenvalue), it follows that an  \sot$\,$-$\,$typical \(T\in\bu(\lp)\) has no eigenvalue \cite{GMM2}. It should be pointed out that we are unable to provide a proof of this result which does not make use of the topology \sote.

\subsection{Main results}\label{Subsection 1.2}
With the above result from \cite{GMM2} in mind, our purpose in the present paper is to undertake a systematic study of the links between comeager subsets of \(\bu(\lp)\), \(1<p\neq 2<\infty\) for different topologies, with a view towards a better understanding of typical properties of contractions on \(\lp\). The basic question we consider is the following: 
\begin{question}\label{Question B}
 Let \(X\) be a real or complex \(\lp\,\)-$\,$space, with \(1<p\neq 2<\infty\). Is it true that the four topologies \wot, \sot, \sotb, and \sote\ on \(\bu(\lp)\) have the same comeager sets?
\end{question}

\smallskip We will in fact consider a formally stronger property than just having the same comeager sets. Following \cite{Lodz}, let us say that two topologies $\tau$ and $\tau'$ on an abstract set $\mathbf M$ are \emph{similar} if they have the same dense sets, or equivalently the same sets with empty interior. It is easy to see that similar topologies have the same comeager sets, and simple examples show that the converse is not true (see Section \ref{SecTwo}). Similar topologies may be extremely different. Consider for example the classical {Sorgenfrey topology} on $\R$, which is  generated by the half-open intervals $[a,b)$, $a,b\in\R$. This topology is zero-dimensional and non-metrizable, and yet it is similar to the usual topology of $\R$.

\smallskip Our first result regarding similarity of operator topologies is the following.

\begin{theorem}\label{Theorem A}
 Let \(X\) be a real or complex \(\lp\,\)-$\,$space with \(1<p\neq 2<\infty\).
 \begin{enumerate}
  \item [\emph{(1)}] If \(p>2\), the topologies \emph{\sot}\ and \emph{\sote}\ on \(\bu(X)\) are similar, and the topologies \emph{\wot}\ and \emph{\sotb}\ are similar.
  \item [\emph{(2)}] If \(1<p<2\), the topologies \emph{\sotb}\ and \emph{\sote}\ on \(\bu(X)\) are similar, and the topologies \emph{\wot}\ and \emph{\sot}\ are similar.
 \end{enumerate}

\end{theorem}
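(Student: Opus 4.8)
The key fact to exploit is that two topologies $\tau \subseteq \tau'$ are similar precisely when every nonempty $\tau$-open set has nonempty $\tau'$-interior; equivalently, when the set of points at which the identity map $(\bu(X),\tau) \to (\bu(X),\tau')$ is continuous is $\tau$-dense (a $\tau$-open set avoiding the $\tau'$-interior of some nonempty $\tau$-open set would be disjoint from all continuity points). Since $\wot \subseteq \sot \subseteq \sote$ and $\wot \subseteq \sotb \subseteq \sote$, in each of the four assertions I only need to produce a $\tau$-dense set of continuity points for the relevant pair, where $\tau$ is the coarser topology.

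**The case $p > 2$.**

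I would first treat the pair $(\sot,\sote)$. Let $T_0 \in \bu(\ell_p)$ and let $\varepsilon > 0$, $x_1,\dots,x_k \in \ell_p$ be given; I must find a $\sot$-neighborhood of some nearby $T$ on which $\sote$-convergence is controlled. The decisive geometric input is that in $\ell_p$ with $p > 2$, the unit ball is uniformly convex with modulus of power type $p$, while the \emph{dual} ball (in $\ell_{p'}$, $p' < 2$) is uniformly smooth with a strong modulus — in fact, for $p > 2$ one has the pointwise inequality controlling $\|T^* f\|$ in terms of $\|Tx\|$-type quantities that makes $\sot$-small perturbations automatically $\sot$-small for adjoints, when restricted near a point of strict convexity. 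Concretely, the plan is: given $(T_i) \xrightarrow{\sot} T$ with all $T_i, T$ contractions, use the fact that for $x$ a smooth point of the $\ell_p$-ball, $T x_i \to Tx$ in norm together with the contraction bound forces the functionals $x_i^* \circ T_i$ (where $x^*$ is the duality map) to converge; iterating over a dense set of coordinate directions and using that finitely many coordinate functionals suffice to test $\sot$-convergence of adjoints on $\ell_{p'}$, one gets $T_i^* \xrightarrow{\sot} T^*$. The continuity points are then at least all $T_0$ that are "generic enough" — e.g. all $T_0$ whose range contains no flat spots, and such a set is $\sot$-dense by an explicit perturbation argument (perturb $T_0$ by a small finite-rank operator into coordinate directions). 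The pair $(\wot,\sotb)$ is handled by the \emph{same} argument run on adjoints: a $\wot$-convergent net of contractions on $\ell_p$ dualizes to a $\sot$-convergent-in-the-weak-sense net of contractions on $\ell_{p'}$ with $p' < 2$, and one transfers the continuity-point statement through the duality $\ell_p^* = \ell_{p'}$.

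**The case $1 < p < 2$ and the main obstacle.**

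For $1 < p < 2$ the roles swap: now $\ell_p$ itself is uniformly smooth (modulus of power type $p$) and $\ell_{p'}$ ($p' > 2$) is uniformly convex, so the argument above applies \emph{to the adjoints}, giving that $\sotb$ and $\sote$ are similar, and likewise $\wot$ and $\sot$ are similar — this is just assertion (1) read through the isometric identification of $\bu(\ell_p)$ with (a weak-* -type slice of) $\bu(\ell_{p'})$ via $T \mapsto T^*$, which swaps $\sot \leftrightarrow \sotb$ and fixes $\wot$ and $\sote$. So structurally (2) is dual to (1). The step I expect to be the real obstacle is making the "smooth/strictly convex point" argument \emph{uniform and quantitative} enough to produce genuinely $\tau$-\emph{dense} sets of continuity points rather than merely nonempty ones: one needs, for a prescribed finite set of test vectors and a prescribed $\varepsilon$, an explicit modulus $\delta$ depending only on $p$ (via the power-type constants of $\ell_p$ and $\ell_{p'}$) such that a $\delta$-$\sot$-perturbation of a suitably chosen $T$ is an $\varepsilon$-$\sote$-perturbation. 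This is where the hypothesis $p \neq 2$ is essential — at $p = 2$ the two power-type exponents coincide in the wrong way and the trick collapses (indeed the introduction records that $\sot$ and $\sote$ have genuinely different comeager sets on $\ell_2$). I would isolate this quantitative perturbation lemma first, prove it by hand using the James-type characterization of smoothness together with the explicit form of the duality map on $\ell_p$-spaces (namely $x \mapsto (|x_n|^{p-1}\operatorname{sgn} x_n)_n / \|x\|_p^{p-1}$, which is norm-to-norm continuous away from $0$ on $\ell_p$ for \emph{all} $1 < p < \infty$ but whose modulus of continuity degenerates at $0$ at a $p$-dependent rate), and then assemble Theorem~\ref{Theorem A} from it together with a routine density-of-perturbations argument.
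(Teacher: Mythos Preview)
Your overall architecture matches the paper's: reduce similarity to density of continuity points for the identity map, and obtain (2) from (1) via $T \mapsto T^*$, which swaps \sot\ with \sotb\ while fixing \wot\ and \sote. But there are two problems, one minor and one substantial.

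The minor one: your similarity criterion is stated backwards. For $\tau \subseteq \tau'$, the condition ``every nonempty $\tau$-open set has nonempty $\tau'$-interior'' is vacuous, since a $\tau$-open set is already $\tau'$-open. What is actually needed is that every nonempty $\tau'$-open set have nonempty $\tau$-interior; equivalently (Lemma~\ref{tropsimple}), that $\mathcal C(\tau,\tau')$ be $\tau'$-dense --- density in the \emph{finer} topology, not the coarser one. This matters: in the Polish setting $\mathbf i_{\tau,\tau'}$ is Borel~$1$ (Lemma~\ref{Lemma 3.1}), so $\mathcal C(\tau,\tau')$ is automatically $\tau$-comeager, hence $\tau$-dense for free. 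The paper in fact establishes \sote-density, the strongest statement available.

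The substantial one: the mechanism you propose for producing continuity points does not work as written and is not what the paper does. Norm-continuity of the duality map $\mathbf J$ gives $\mathbf J(T_i x) \to \mathbf J(Tx)$ from $T_i x \to Tx$, but this does not yield $T_i^* f \to T^* f$ on any useful set of $f$: the identity $T^*\bigl(\mathbf J(Tz)\bigr) = \mathbf J(z)$ holds only at \emph{norming} vectors $z$ of $T$, and you have no handle on the norming vectors of the approximating $T_i$. The paper's argument (Proposition~\ref{lp}) is concrete and quite different. For $p>2$ one first approximates, in \sote, by a finite-rank $B \in \mathcal B_1(E_M)$ with $\Vert B\Vert = 1$ that admits a norming vector $z$ whose image $Bz$ has \emph{full support} in $E_M$ (Lemma~\ref{5.15}). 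The key estimate (Lemma~\ref{5.16}) then says: for any contraction $T$, $\Vert P_M(T-B)P_M\Vert < \delta$ forces $\Vert P_M(T-B)\Vert < \varepsilon$. Thus a \wot-type control of the upper-left $M\times M$ block forces \sotb-type control of the first $M$ rows, so such $B$ lie in $\mathcal C(\wot,\sotb) \subseteq \mathcal C(\sot,\sote)$. The proof is a direct computation applying the pointwise inequality $|u+v|^p + |u-v|^p \geq 2|u|^p + p\,|u|^{p-2}|v|^2$ (valid for $p \geq 2$) to the coordinates of $T(z \pm s y)$ with $y = (I-P_M)x$: the full-support hypothesis on $Bz$ keeps the factors $|\langle e_m^*, Tz\rangle|^{p-2}$ bounded away from zero, and the near-norming bound $\Vert P_M Tz\Vert > 1-\delta$ makes the resulting inequality effective after optimizing in $s$. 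This is where $p>2$ enters --- through the exponent $p-2$ in Kan's inequality, not through the modulus of continuity of $\mathbf J$. Your ``quantitative perturbation lemma'' is precisely Lemma~\ref{5.16}, but the smoothness/duality-map route you sketch does not reach it; the ingredients you are missing are the finite-rank approximant with a norming vector of generic image, and the pointwise $\ell_p$ inequality exploiting it.
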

It follows immediately that a property (P) of contractions on \(\lp\), \(p>2\) is \sot$\,$-$\,$typical (resp. \wot$\,$-$\,$typical) if and only if it is \sote-$\,$typical (resp. \sotb-$\,$typical). Analogous statements hold in the case \(1<p<2\).
\par\smallskip
However, Theorem \ref{Theorem A} falls short of proving, for instance, that an \sot$\,$-$\,$typical contraction on the complex \(\lp\,\)-$\,$space for \(1<p<2\) has no eigenvalue -- which is one of the nagging questions left open in \cite{GMM2} -- while a positive answer to the Question \ref{Question B} would yield a weight of interesting corollaries concerning typical properties of contractions on \(\lp\).

\smallskip
Our main result provides a positive answer to Question \ref{Question B} in the particular case where \(X\) is a \emph{real} \(\lp\,\)-$\,$space with \(p=3\) or \(p=3/2\):

\begin{theorem}\label{Theorem C}
 If \(X\) is a real \(\lp\,\)-$\,$space with \(p=3\) or \(p=3/2\), then all topologies on  \(\bbx\) lying between the two topologies \emph{\wot} and \emph{\sote}\  are similar.
\end{theorem}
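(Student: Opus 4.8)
The plan is to reduce Theorem \ref{Theorem C} to a single additional similarity statement on top of Theorem \ref{Theorem A}, and then to prove that statement by the ``continuity points'' method announced in the introduction. Recall first two elementary facts: similarity is an equivalence relation on topologies on a fixed set, and it is \emph{sandwiched}, i.e. if $\tau_1\subseteq\tau\subseteq\tau_2$ and $\tau_1$ is similar to $\tau_2$, then $\tau$ is similar to both (the three associated classes of dense sets are nested and the extreme ones coincide). Hence it suffices to prove that $\wot$ and $\sote$ are similar on $\bbx$. Combining this with Theorem \ref{Theorem A}: for $p=3$ we already know $\sot\sim\sote$ and $\wot\sim\sotb$, so by transitivity \emph{one} further cross-similarity between the two classes, say $\wot\sim\sot$, is enough to merge all four topologies; for $p=3/2$ we know $\sotb\sim\sote$ and $\wot\sim\sot$, so it is enough to establish $\wot\sim\sotb$. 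Since $T\mapsto T^*$ is a homeomorphism from $(\bu(\ell_p),\sot)$ onto $(\bu(\ell_q),\sotb)$ and from $(\bu(\ell_p),\wot)$ onto $(\bu(\ell_q),\wot)$, where $q$ is conjugate to $p$, the statement ``$\wot\sim\sotb$ on $\bu(\ell_{3/2})$'' is equivalent to ``$\wot\sim\sot$ on $\bu(\ell_3)$''. So everything comes down to proving: \emph{in the real case, $\wot$ and $\sot$ are similar on $\bu(\ell_3)$}.

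For this I would use the following sufficient condition for similarity of comparable topologies, which I expect to be available from Section \ref{SecTwo}: if $\tau\subseteq\tau'$ and the set of continuity points of $\mathrm{id}\colon(\mathbf M,\tau)\to(\mathbf M,\tau')$ is $\tau'$-dense, then $\tau$ and $\tau'$ are similar (given a nonempty $\tau'$-open $U$, any continuity point lying in $U$ provides a nonempty $\tau$-open subset of $U$, so every $\tau$-dense set meets $U$). Concretely, one wants a $\sot$-dense set $\mathcal C\subseteq\bu(\ell_3)$ such that, whenever $T_n\in\bu(\ell_3)$ and $T_n\xrightarrow{\wot}T$ with $T\in\mathcal C$, then $T_n\xrightarrow{\sot}T$. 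Since $\ell_3$ has the Kadec--Klee (Radon--Riesz) property, since $T_nx\rightharpoonup Tx$, and since $\|T_nx\|\le\|x\|$ for all $x$, membership of $T$ in $\mathcal C$ reduces to the single requirement $\limsup_n\|T_nx\|_3\le\|Tx\|_3$ for every $x\in\ell_3$ and every such sequence $(T_n)$ — in other words: \emph{no $\wot$-approximation of $T$ by contractions may asymptotically enlarge $\|T\,\cdot\,\|$ in any direction}.

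The operators in $\mathcal C$ should be assembled from finite-dimensional contractions of a carefully chosen special type, glued together (e.g. in a block-diagonal fashion, or completed by an isometric ``shift'' part so as to use up the remaining coordinates). That such operators are $\sot$-dense is routine, as $\sot$-neighbourhoods only involve finitely many vectors; the heart of the matter is to show that each such $T$ lies in $\mathcal C$. If not, there are contractions $T_n\xrightarrow{\wot}T$ with $\|T_nx_0\|$ converging, along a subsequence, to a limit strictly larger than $\|Tx_0\|$; then $(T_n-T)x_0=T_nx_0-Tx_0$ is weakly null but bounded away from $0$ in norm, hence (passing to a further subsequence and using a gliding-hump argument) nearly disjointly supported from any prescribed finite family of vectors, so that an $\ell_3$-additivity estimate for such vectors shows that $T_n$ can remain a contraction only if the perturbation $T_n-T$ becomes, for large $n$, ``trivial'' on the directions along which $T$ attains its norm; and the special type of $T$ is designed so that this is incompatible with $(T_n-T)x_0$ being non-negligible, a contradiction. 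Making this rigorous rests on an \emph{exact description of the norming vectors} — the unit vectors attaining the operator norm — of the special finite-dimensional contractions, together with the ensuing rigidity statement. The Lagrange conditions characterising norming vectors of a contraction $A$ involve the duality map $t\mapsto|t|^{p-1}\sgn(t)$ of $\ell_p$, which for $p=3$ is the piecewise-polynomial map $t\mapsto t|t|$ (and for $p=3/2$ is $t\mapsto\sgn(t)\sqrt{|t|}$); over $\R$ this turns the relevant systems into polynomial ones that can be analysed completely, which is what fails for general $p\neq 2$ or over $\mathbb{C}$ (there the duality map of $\ell_3$ is $z\mapsto|z|\,\bar z$, neither holomorphic nor polynomial). \textbf{This norming-vector analysis, and the rigidity of the special contractions it is meant to yield, is the main obstacle}; it is precisely what forces the restrictions to $p\in\{3,3/2\}$ and to real scalars, and it seems natural to conjecture that Theorem \ref{Theorem C} nonetheless holds for every $1<p\neq 2<\infty$.
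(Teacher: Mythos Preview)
Your reduction in the first paragraph is exactly right and matches the paper: by sandwiching, it suffices that $\wot$ and $\sote$ be similar; by Theorem~\ref{Theorem A} and the adjoint homeomorphism $T\mapsto T^*$, everything reduces to showing $\wot\sim\sot$ on $\bu(\ell_3)$ in the real case. Your continuity-point criterion from paragraph~2 is also correct.

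The genuine gap is in your third paragraph. Your reformulation ``$\limsup_n\|T_nx\|\le\|Tx\|$ for every $x$'' is indeed equivalent to $T\in\mathcal C(\wot,\sot)$, but it is not the condition one actually verifies. The paper does not attempt to rule out norm-enlarging $\wot$-approximations for \emph{all} $x$ via a gliding-hump/contradiction argument; that route is not known to work and your sketch (``the perturbation becomes trivial on the directions along which $T$ attains its norm, which is incompatible with $(T_n-T)x_0$ being non-negligible'') is not a mechanism one can make rigorous as stated. The clean sufficient condition you are missing is Proposition~\ref{KK3}: if $\|T\|=1$ and $\overline{\sp}\,\mathcal N(T)=\ell_3$, then $T\in\mathcal C(\wot,\sot)$. (For $z\in\mathcal N(T)$ one has $\|T_nz\|\le\|z\|=\|Tz\|$ automatically, so Kadec--Klee gives $T_nz\to Tz$; then bounded linear extension handles all $x$.) By Corollary~\ref{KK&meager} the whole problem therefore becomes: show that the set of $T\in\bu(\ell_3)$ with $\overline{\sp}\,\mathcal N(T)=\ell_3$ is $\sot$-dense (Proposition~\ref{keymachin}).

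This last step is where all the work lies, and it is not a Lagrange-multiplier computation. The paper's construction is inductive: one starts from a ``special'' finite-rank contraction $S_0\in\mathcal B(E_N,E_M)$ approximating the target (Lemma~\ref{specapprox}), whose norming vectors necessarily have full support (Lemma~\ref{suppspec}). One then performs successive ``maximal modifications'' $S_0\mapsto S_1\mapsto\cdots$ (Definition~\ref{modif}), each raising the ambient dimension by~$1$ but raising $\dim\sp\,\mathcal N(S_k)$ by at least~$2$ (Lemma~\ref{dim+2}); the process therefore terminates with $\sp\,\mathcal N(S_K)=E_{N+K}$, and one completes by a shift on the remaining coordinates. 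The two places where $p=3$ and $\K=\R$ are genuinely used are (i) Lemma~\ref{sign}/\ref{finitesupport}: for sign-compatible $x,y\in\mathcal N(S)$ the map $t\mapsto\|x+ty\|^3-\|S(x+ty)\|^3$ is a \emph{polynomial} of degree~$\le 3$ on a half-line, which together with the ``interval property'' forces $\mathcal N(S)\cap S_{E_N}$ to be finite; and (ii) Lemma~\ref{hardtruc}: the same degree-$3$ polynomial structure is what allows one to show that the modification $S_{\mathbf y,\eta,\delta}$ stays a contraction for some $\delta>0$ (``$p=3$ is a good exponent''). Your intuition that the real duality map $t\mapsto t|t|$ is the algebraic reason for the restriction is correct, but the way it enters is through these polynomial identities for $\|\cdot\|^3$, not through a first-order optimality system.
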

It is tempting to conjecture that the same result holds true for any real \(\lp\,\)-$\,$space with \(1<p\neq 2<\infty\), but we have been unable to prove it for any other value of \(p\) than \(p=3\) and \(p=3/2\). We do not take the risk of making the same conjecture for complex \(\lp\,\)-$\,$spaces: the situation might well be different.

\subsection{About the proofs} 
As it turns out,  the question of the similarity of two topologies \(\tau \) and \(\tau '\) on some abstract set $\mathbf M$ is very much enlightened by the consideration of the \emph{points of continuity} of the formal identity map between the spaces \((\mathbf M,\tau )\) and \((\mathbf M,\tau ')\). This  point of view will be our starting point for the proofs of Theorem \ref{Theorem A} and Theorem \ref{Theorem C}. It should be noted that the proof of Theorem \ref{Theorem A} relies heavily on some results from \cite{GMM2}, and that Theorem \ref{Theorem A}  could have been proved using only the methods of \cite{GMM2}; but the ``points of continuity approach'' makes everything much more transparent. As to the proof of Theorem \ref{Theorem C}, it requires quite a lot of additional work besides the use of points of continuity. In particular,  a thorough study of the \emph{norming vectors} for certain classes of contractions on the real \(\ell_{3}\,\)-$\,$space will be needed. 

\subsection{Organization of the paper} 
We develop the points of continuity approach in Section \ref{SecTwo}. As a first illustration we give some natural examples of similar topologies.  We also describe the continuity points of the identity map between \((\mathbf M,\tau )\) and \((\mathbf M,\tau ')\) when \(\mathbf M=\bu(\ell_{2})\) and \(\tau \), \(\tau '\) are chosen among \wot, \sot, \sotb,  \sote\ (Proposition \ref{CPH}). Finally, we show that for certain classes of Banach spaces \(X\), all contractions $T\in\bbx$ with enough norming vectors are points of $\wot\,$-$\,\sot$ continuity (Proposition \ref{KK3}); a result that will be essential for the proof of Theorem \ref{Theorem C}. Theorem \ref{Theorem A} is proved in Section \ref{SecFour}, and Theorem \ref{Theorem C} is proved in Section \ref{p=3}.  Thanks to the points of continuity approach, things boil down to showing that the contractions with enough norming vectors are $\sot\,$-$\,$dense in $\mathcal B(\ell_3)$ (Proposition \ref{keymachin}). The remaining sections are not directly related to Theorem \ref{Theorem A} and Theorem \ref{Theorem C}, but they fit naturally into the landscape. In Section \ref{SecThree}, we show that  the points of continuity approach allows to retrieve in a very direct way the main Hilbertian typicality results from \cite{E} and \cite{EM}. In Section \ref{NAsection}, we show that, in strong contrast to what happens for the operator-norm topology, an \sote-$\,$typical \(T\in\bu(\lp)\), \(1<p<\infty\) does not attain its norm. Finally, Section \ref{FredhSection} contains additional facts concerning similar topologies, other examples of points of $\wot\,$-$\,\sot$ continuity, and a few typicality results pertaining to Fredholm theory.

\subsection{Notation}
We will denote by $\overline{\,\mathbb{D}}$ the closed unit disk in $\C$, and by $\T$ the unit circle.
The letter $X$ will always denote a real or complex infinite-dimensional separable Banach space, and the scalar field will be denoted by $\K$. The closed unit ball of $X$ will be denoted by $B_X$. The canonical basis of $\ell_p$ or $c_0$ is  $(e_j)_{j\in\Z_+}$, and $(e_j^*)$ is the associated sequence of coordinate functionals. If $N\in\Z_+$ we set $E_N:=\sp(e_0,\dots ,e_N)$, and we denote by $P_N$ the canonical projection of $X$ onto $E_N$.  The letter $\mathbf M$ will stand for an abstract set potentially equipped with several topologies. If $\tau$ and $\tau'$ are  two topologies on $\mathbf M$, we denote by $\mathcal C(\tau,\tau')$ the set of all points of continuity of the identity map $\mathbf i_{\tau,\tau'}:(\mathbf M,\tau)\to (\mathbf M,\tau')$.

\section{Similarity and points of continuity}\label{SecTwo}
\subsection{Two general facts} 
We start by presenting two very simple facts concerning similarity of topologies on an abstract set $\mathbf M$. The first one is well known; see \mbox{e.g.} \cite[Proposition 9]{Andri} or \cite[Theorem 2.2]{Lodz}.

\blm\label{similarcomeager} Let $\tau$ and $\tau'$ be two topologies on $\mathbf M$. If $\tau$ and $\tau'$ are similar, then they have the same comeager sets.
\elm
\bpf It is enough to show that if $\tau$ and $\tau'$ are similar, then they have the same nowhere dense sets. Now, given a topology on $\mathbf M$, a set $E\subseteq \mathbf M$ is nowhere dense if and only if the following holds true: for any set $A\subseteq\mathbf M$ with non-empty interior, one can find a set $B$ with non-empty interior such that $B\subseteq A$ and $B\cap E=\emptyset$. The lemma follows immediately.
\epf

\begin{remark} Topologies sharing the same comeager sets may not be similar. For example, on $\mathbf M:=\R$, let $\tau$ be the usual topology and let $\tau'$ be the topology generated by $\tau\cup\{ \R\setminus\mathbb Q\}$, so that a set $V'\subseteq\R$ is $\tau'\,$-$\,$open if and only if $V'=V\cup \bigl(W\cap(\R\setminus\mathbb Q)\bigr)$ where $V,W$ are $\tau\,$-$\,$open. By the Baire Category Theorem and since $\tau\subset \tau'$, any closed set with empty interior in $(\R,\tau)$ is also closed with empty interior in $(\R,\tau')$, and hence any $\tau\,$-$\,$meager set is $\tau'$-$\,$meager. Conversely, since every $\tau'\,$-$\,$closed set $C'\subseteq\R$ has the form $C'=C\cup D$ where $C$ is $\tau\,$-$\,$closed and $D\subseteq\mathbb Q$, any closed set with empty interior in $(\R,\tau')$ is $\tau\,$-$\,$meager and hence any $\tau'$-$\,$meager set is $\tau\,$-$\,$meager. So $\tau$ and $\tau'$ share the same comeager sets; but they are not similar since $\mathbb Q$ is not dense in $(\R,\tau')$. However, for the operator topologies we will be considering, the two properties are in fact equivalent; see Proposition \ref{similarcomeagerbis}.
\end{remark}

\par\smallskip Our second lemma shows the relevance of points of continuity of identity maps when investigating the similarity of two topologies.

\blm\label{tropsimple} Let  $\tau$ and $\tau'$ be two topologies on $\mathbf M$.
\be
\item[\rm (a)] For any set $D\subseteq \mathbf M$, we have $\overline{D}^{\,\tau}\cap \mathcal C(\tau,\tau')\subseteq \overline{D}^{\,\tau'}$.
\item[\rm (b)] If $D\subseteq \mathbf M$ is $\tau\,$-$\,$dense and $\tau'$-$\,$closed, then $D\supseteq\mathcal C(\tau,\tau')$.
\item[\rm (c)] If $\mathcal C(\tau,\tau')$ is $\tau'$-$\,$dense in $\mathbf M$ and $\mathcal C(\tau',\tau)$ is $\tau\,$-$\,$dense, then $\tau$ and $\tau'$ are similar.
\ee
\elm
\bpf  (a) If $x\in \overline{D}^{\,\tau}\cap \mathcal C(\tau,\tau')$, one can find a net $(z_i)\subseteq D$ such that $z_i\xrightarrow{\tau} x$; and then $z_i\xrightarrow{\tau'} x$ because $x\in \mathcal C(\tau,\tau')$.

(b) follows immediately from (a).

(c) By symmetry, it is enough to show that  if $\mathcal C(\tau,\tau')$ is $\tau'\,$-$\,$dense in $\mathbf M$, then any $\tau\,$-$\,$dense  subset of $\mathbf M$ is $\tau'$-$\,$dense; which is obvious: 
if $D\subseteq \mathbf M$ is $\tau\,$-$\,$dense, then $\mathcal C(\tau,\tau')\subseteq \overline{D}^{\,\tau'}$ by (a), and hence $D$ is $\tau'$-$\,$dense. 
\epf

\begin{remark} The assumption of part (c) above is satisfied in particular if $\tau\subseteq \tau'$ and $\mathcal C(\tau,\tau')$ is $\tau'$-$\,$dense in $\mathbf M$. This is the situation we will mostly consider in what follows.
\end{remark}

 \smallskip 
 \subsection{Examples of similar topologies} The next proposition is a concrete illustration of Lemma \ref{tropsimple}. Recall that a Banach space $X$ is said to have the \emph{Kadec-Klee property} if the following holds true: if $(x_n)$ is a sequence in $B_X$ such that $x_n\xrightarrow{w} x$ where $\Vert x\Vert =1$, then in fact $\Vert x_n-x\Vert\to 0$. (This is the ``sequential'' definition of Huff \cite{Hu}.) For example, $\ell_p$ has the Kadec-Klee property for every $1\leq p<\infty$. One defines in the same way the  \emph{$w^*$-$\,$Kadec-Klee property} for a dual Banach space $X=Z^*$. For example, $\ell_1=c_0^*$ has the $w^*$-$\,$Kadec-Klee property; and so does $\ell_p$ for any $1<p<\infty$ since KK and $w^*$-$\,$KK  are the same properties for reflexive spaces.
 \bpr\label{KK} Let $X$ be a Banach space with separable dual {\rm (}resp. a separable dual space{\rm )} having the Kadec-Klee property {\rm (}resp. the $w^*$-$\,$Kadec-Klee property{\rm )}
 , and let $\mathbf M:=B_X$, the closed unit ball of $X$. If $\tau$ is any topology on $B_X$ finer than the weak {\rm (}resp. the weak$^*${\rm )} topology
 , weaker than the norm topology and such that the unit sphere $S_X$ is $\tau\,$-$\,$dense in $B_X$, then the topologies $\tau$ and $w$ {\rm (}resp. $\tau$ and $w^*${\rm )} are similar.
 \epr
 \bpf If $X^*$ is separable (so that $(B_X,w)$ is metrizable) and $X$ has the Kadec-Klee property, then $S_X\subseteq \mathcal C(w,\Vert\,\cdot\,\Vert)$ and hence $S_X\subseteq \mathcal C(w,\tau)$; and similarly, if $X$ is a separable dual space with the $w^*$-$\,$Kadec-Klee property then $S_X\subseteq \mathcal C(w^*, \tau)$.  So the result follows immediately from Lemma \ref{tropsimple}.
\epf

\begin{remark*} It follows in particular from Corollary \ref{KK} that if $X$ is a separable dual space with the $w^*$-$\,$Kadec-Klee property (e.g. if $X=\ell_1$), then the weak topology and the weak$^*$ topology on $B_X$ are similar. This could of course have been proved directly.
\end{remark*}
 
 \smallskip The following special case of Proposition \ref{KK} allows to construct lots of similar Polish topologies on the unit ball of a separable dual space with the $w^*$-$\,$Kadec-Klee property.

 \bco\label{KK2} Let $X=Z^*$ be a separable dual Banach space, and let $\mathbf M:=B_{Z^*}$. Let also $\mathbf q=(q_i)_{i\in I}$ be a separating countable family of continuous and $w^*$-$\,$lower semicontinuous seminorms on $Z^*$.  Denote by $\tau$ the topology on $B_{Z^*}$ generated by $\mathbf q$. 
 \be
 \item[\rm (i)] The topology $\tau$ is Polish, finer than the weak$^*$ topology restricted to $B_{Z^*}$, and weaker than the norm topology.
 \item[\rm (ii)] If $X=Z^*$ has the $w^*$-$\,$Kadec-Klee property and $\inf_{\Vert h\Vert=1} \max_{i\in F} q_i(h)=0$ for every finite set $F\subseteq I$, then the topologies $\tau$ and $w^*$ are similar.
 \ee 
 \eco
 \bpf (i) Since the seminorms $q_i$ are continuous, $\tau$ is weaker than the norm topology. 
 
 \smallskip Let us show that $\tau$ is finer than $w^*$ restricted to $B_{Z^*}$. Since each seminorm $q_i$ is $w^*$-$\,$lower semicontinuous, we can write for each $i\in I$ the set
 $\{x\in X\textrm{ ; } q_i(x)\le 1\}$ as
 $$\bigcap_{z\in A_i} \{ z^*\in Z^* ;\; \vert \langle z^*, z\rangle\vert \leq 1\}$$ for some set $A_i\subseteq Z$. Since the family $\mathbf q$ is separating, we then have $\overline{\rm span}\,\bigcup_{i\in I} A_i=Z$; and the result follows easily since we are working on the bounded set $B_{Z^*}$. 
 
\smallskip  Since $\mathbf q$ is a countable family of seminorms, the topology $\tau$ is metrizable; namely, assuming as we may that $\mathbf q$ is uniformly bounded (on bounded sets) and that $I\subseteq \N$, it is generated by the metric $d(x,y):=\sum_{i\in I} 2^{-i} q_i(x-y)$. Since $(B_{Z^*}, w^*)$ is compact and the seminorms $q_i$ are $w^*$-$\,$lower semicontinuous, it is not hard to check that $(B_{Z^*},d)$ is complete. Hence $\tau$ is completely metrizable. Moreover, $(B_{Z^*},\tau)$ is separable since $\tau$ is weaker than the norm topology of $B_{Z^*}$. So $(B_{Z^*},\tau)$ is Polish.
 
 \smallskip  
 (ii) By Proposition \ref{KK}, in order to show that $\tau$ and $w^*$ are similar, it is enough to prove that 
 $S_{X}$ is $\tau\,$-$\,$dense in $B_{X}$. Let $x_0\in B_{X}$ be arbitrary, let $\varepsilon >0$, and let $F$ be any finite subset of $I$: we need to find a vector $x\in S_{X}$ such that $q_i(x-x_0)<\varepsilon$ for all $i\in F$. By our assumption on $\mathbf q$, one can find some $h\in X$ with $\Vert h\Vert=1$ such that $q_i(h)\leq\varepsilon/2$ for all $i\in F$. Then $q_i\bigl((x_0+th)-x_0\bigr)\leq t\varepsilon/2$ for any $t\in\R_+$ and all $i\in F$. Now, we have $\Vert x_0\Vert\leq 1$ and $\Vert x_0+2h\Vert\geq 2\Vert h\Vert-\Vert x_0\Vert\geq 1$. So we can find $t\in [0,2]$ such that $\Vert x_0+th\Vert=1$; and then the vector $x:= x_0+th$ has the required properties.
  \epf
 
 \smallskip
 \begin{example} Let $1\leq p<\infty$, and let $\mathbf M$ be the unit ball of $\ell_p(\Z_+\times\Z_+)$. Consider the ``row topology'' $\tau$ and the ``column topology'' $\tau'$ on $\mathbf M$, defined as follows: $\tau$ is generated by the seminorms $q_i(x):= \left(\sum_{j=0}^\infty \vert x_{i,j}\vert^p\right)^{1/p}$, $i\in \Z_+$, and $\tau'$ is generated by the seminorms $q'_j(x):=\left(\sum_{i=0}^\infty \vert x_{i,j}\vert^p\right)^{1/p}$, $j\in \Z_+$. Then $\tau$ and $\tau'$ are {\rm (}clearly incomparable{\rm )} similar Polish topologies on $\mathbf M$.
 \end{example}
 \bpf Each seminorm $q_i$ is $w^*$-$\,$lower semicontinuous, being the supremum of the $w^*$-$\,$continuous seminorms $q_{i,N}(x):=\left(\sum_{j=0}^N \vert x_{i,j}\vert^p\right)^{1/p}$, $N\ge 0$; and we have $\bigcap_{i\in F} \ker(q_i)\neq\{ 0\}$ for any finite set $F\subseteq\Z_+$. The same is true for the seminorms $q'_j$, so the result follows from Corollary \ref{KK2}. 
 \epf
 
 \smallskip
 \begin{example} Let $1\leq p <\infty$ and let $\mathbf M:=B_{\ell_p}$. For any bounded sequence of positive real numbers $\mathbf w=(w_n)_{n\geq 0}$ and any $p\leq q\leq \infty$, denote by $\Vert\,\cdot\,\Vert_{\mathbf w,q }$ the associated weighted $\ell_q\,$-$\,$norm on $X:=\ell_p$, \mbox{\it i.e.}
 \[ \Vert x\Vert_{\mathbf w, q}:=\Vert \mathbf w x\Vert_{q}=\left( \sum_{n=0}^\infty w_n \vert x_n\vert^q\right)^{1/q}, \quad x\in \ell_p.\]
 Then $(B_{\ell_p}, \Vert\,\cdot\,\Vert_{\mathbf w,q })$ is Polish for any choice of $(\mathbf w,q)$. Moreover, if $\mathbf w$ and $\mathbf w'$ are two weight sequences such that $\inf_{n\in\Z_+} w_n=0=\inf_{n\in\Z_+} w'_n$, then the topologies generated by $\Vert\,\cdot\,\Vert_{\mathbf w,q }$ and $\Vert\,\cdot\,\Vert_{\mathbf w',q' }$ are similar for any choice of $q,q'\geq p$. However, these topologies are incomparable as soon as $\inf_{n\in\Z_+} w_n/w'_n=0= \inf_{n\in\Z_+} w'_n/w_n$.
 \end{example}
 \bpf Since $\inf_{\Vert h\Vert=1} \Vert h\Vert_{\mathbf w, q}\leq \inf_{n\in\Z_+} \Vert e_n\Vert_{\mathbf w,q}=\inf_{n\in\Z_+} w_n$, this follows again from Corollary \ref{KK2}. 
 \epf

\subsection{Operator topologies} 
Let us now consider the case where \(\mathbf M=\bbu(X)\), with two different topologies chosen within the set \(\{\wot,\sot,\sot_{*},\sote\}\). The following lemma is  well known, but we give a proof for completeness' sake. Recall that
 a map $f:E\to E'$ between topological spaces $E$ and $E'$ is said to be \emph{Borel $1$} if $f^{-1}(V)$ is $F_\sigma$ in $E$ for any open set $V\subseteq E'$. (This notion is really interesting only if all open sets in $E$ are $F_\sigma$, e.g. if $E$ is metrizable, since otherwise continuous maps need not be Borel $1$; but it makes sense for arbitrary topological spaces.) It is well known that if $E'$ is second-countable, then the set of continuity points of any Borel $1$ map $f:E\to E'$ is $G_\delta$ and comeager in $E$ (see \mbox{e.g.} \cite[Theorem 24.14]{Ke}). 
\par\smallskip
\begin{lemma}\label{Lemma 3.1}
 Let \(X\) be a Banach space, and let \(\mathbf M:=\bbu(X)\).
 \begin{enumerate}
  \item [\emph{(1)}] if \(X\) is separable, then the identity map \(\ide{\emph{\wot}}{\emph{\sot}}\) is Borel \emph{1}.
  \item[\emph{(2)}] if \(X^{*}\) is separable, then the map \(\ide{\emph{\wot}}{\emph{\sote}}\) is Borel 1, and hence  $\mathbf i_{\emph{\wot},\emph{\sot}}$, \(\ide{\emph{\wot}}{\emph{\sot}_{*}}\) and 
  $\ide{\emph{\sot}}{\emph{\sote}}$ 
  are also Borel \emph{1}.
 \end{enumerate}
\end{lemma}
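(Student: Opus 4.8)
The plan is to prove (1) and (2) separately, exploiting in each case the separability of an appropriate space to reduce the Borel-$1$ property to a countable condition. Recall that $\mathbf i_{\tau,\tau'}$ is Borel $1$ precisely when the preimage of every $\tau'$-open set is $F_\sigma$ for $\tau$; since both \sot\ and \sote\ are metrizable when the relevant space is separable, it suffices to check this on a countable basis of $\tau'$-open sets, and in fact on the subbasic open sets given by the defining seminorms. So the real content is: each subbasic \sot-open (resp. \sote-open) subset of $\bbx$ is \wot-$F_\sigma$.

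For part (1), fix a countable dense set $\{x_k\}\subseteq X$; the \sot-topology on $\bbx$ is generated by the seminorms $T\mapsto\Vert Tx_k\Vert$. A subbasic \sot-open set has the form $\{T\in\bbx:\Vert Tx_k-y\Vert<r\}$ for $x_k$ in the dense set, $y\in X$, $r>0$; and it is enough to treat sets of the form $V_{k,y,r}:=\{T:\Vert Tx_k-y\Vert<r\}$. Now the function $T\mapsto \Vert Tx_k - y\Vert$ is \wot-lower semicontinuous on $\bbx$: indeed $\Vert Tx_k-y\Vert=\sup_{\Vert\varphi\Vert\le 1}|\langle Tx_k-y,\varphi\rangle|$, and for each fixed $\varphi\in X^*$ the map $T\mapsto\langle Tx_k,\varphi\rangle$ is \wot-continuous, so $\Vert Tx_k-y\Vert$ is a supremum of \wot-continuous functions. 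Hence $\{T:\Vert Tx_k-y\Vert\le c\}$ is \wot-closed for every $c$, which gives $V_{k,y,r}=\bigcup_{m\ge 1}\{T:\Vert Tx_k-y\Vert\le r-1/m\}$ as a countable union of \wot-closed sets, i.e. $F_\sigma$. Since \sot\ restricted to $\bbx$ is second-countable (separable metrizable), a countable family of such $V_{k,y,r}$ forms a subbasis, and preimages of arbitrary \sot-open sets are countable unions of finite intersections of $F_\sigma$ sets, hence $F_\sigma$. This proves $\mathbf i_{\wot,\sot}$ is Borel $1$.

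For part (2), when $X^*$ is separable we additionally have that for $T\in\bbx$ the seminorms $T\mapsto\Vert T^*\varphi_j\Vert$, with $\{\varphi_j\}$ dense in $X^*$, generate the $\sotb$-part of $\sote=\sot\vee\sotb$. The key point is again \wot-lower semicontinuity: $\Vert T^*\varphi_j-\psi\Vert=\sup_{\Vert x\Vert\le1,\,x\in D}|\langle T x,\varphi_j\rangle-\langle x,\psi\rangle|$ for any norm-dense $D\subseteq B_X$, and each $T\mapsto\langle Tx,\varphi_j\rangle$ is \wot-continuous; so $T\mapsto\Vert T^*\varphi_j-\psi\Vert$ is \wot-lower semicontinuous, and exactly as above each subbasic \sote-open set is \wot-$F_\sigma$. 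Since $X^*$ separable forces $X$ separable and $(\bbx,\sote)$ second-countable, we conclude $\mathbf i_{\wot,\sote}$ is Borel $1$. The final three assertions follow formally: $\mathbf i_{\wot,\sot}$ and $\mathbf i_{\wot,\sotb}$ are Borel $1$ because \sot\ and \sotb\ are coarser than \sote\ (a preimage under $\mathbf i_{\wot,\sot}$ of a \sot-open set is the preimage of a \sote-open set, which is $F_\sigma$); and $\mathbf i_{\sot,\sote}$ is Borel $1$ because \sot\ is finer than \wot, so an \sote-open set has \sot-$F_\sigma$ preimage — indeed if $U$ is \sote-open then $\mathbf i_{\wot,\sote}^{-1}(U)$ is \wot-$F_\sigma$, hence \sot-$F_\sigma$ (every \wot-closed set is \sot-closed), and this set equals $\mathbf i_{\sot,\sote}^{-1}(U)$ as a subset of $\bbx$.

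The only mildly delicate point — and the place to be careful — is the passage from ``subbasic open sets have $F_\sigma$ preimage'' to ``all open sets do'': this uses that the target topology restricted to $\bbx$ is second-countable, so that every open set is a countable union of finite intersections of subbasic ones, and a countable union of finite intersections of $F_\sigma$ sets is $F_\sigma$ (finite intersections of $F_\sigma$ sets are $F_\sigma$ in a metrizable — in particular, any topological — space since $F_\sigma$ is closed under finite intersection). Second-countability of $(\bbx,\sot)$ holds because it is separable metrizable (here $X$ separable is needed), and likewise for $(\bbx,\sote)$ when $X^*$ is separable; no Baire-category input is required for this lemma, only for the subsequent statement about continuity points.
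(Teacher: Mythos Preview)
Your proof is correct and follows essentially the same approach as the paper's: both reduce to showing that the subbasic \sot-open (resp.\ \sote-open) sets $\{T:\Vert Tx-y\Vert<r\}$ and $\{T:\Vert T^*\varphi-\psi\Vert<r\}$ are \wot-$F_\sigma$, and both establish this via the \wot-lower semicontinuity of $T\mapsto\Vert Tx-y\Vert$ (resp.\ $T\mapsto\Vert T^*\varphi-\psi\Vert$) as a supremum of \wot-continuous functionals. Your treatment is slightly more explicit about why second-countability is needed to pass from subbasic sets to arbitrary open sets, and about deducing the ``hence'' clause in (2) from the topology inclusions; the paper leaves these points implicit.
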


\bpf (1) We have to show that any $\sot\,$-$\,$open set  $\mathcal U\subseteq \bbx$ is $\wot\,$-$\, F_\sigma$. Since $X$ is separable, $\mathcal U$ is a countable union of finite intersections of sets of the form \[\mathcal V(A,x , \varepsilon):=\bigl\{ T\in\bbx;\; \Vert  (T-A)x\Vert<\varepsilon\bigr\} ,\] where $A\in\bbx$,  $x\in X$ and $\varepsilon >0$. Now, if $T\in\bbx$, then \[ T\in \mathcal V(A,x, \varepsilon)\iff \exists n\in\N\; \Bigl( \forall x^*\in B_{X^*}\;:\; \left\vert \pss{x^*}{(T-A)x}\right\vert\leq \varepsilon-\frac1n\Bigr).\]

\noindent This shows that each set $\mathcal V(A,x, \varepsilon)$ is $\wot\,$-$F_\sigma$; and hence that $\mathcal U$ is $\wot\,$-$F_\sigma$ as well.

\smallskip (2) Assume that $X^*$ is separable, and let us show that  \(\ide{\wot}{\sote}\) is Borel 1. Let $\mathcal U\subseteq \bbx$ be $\sote\,$-$\,$open. Then $\mathcal U$ is a countable union of finite intersections of sets either of the form $\mathcal V(A,x , \varepsilon)$, or of the form $\mathcal V^*(A,x^*,\varepsilon):= \bigl\{ T\in\bbx;\; \Vert  (T-A)^*x^*\Vert<\varepsilon\bigr\} $, where $A\in\bbx$, $x^*\in X^*$ and $\varepsilon >0$. We already know that each set $\mathcal V(A,x , \varepsilon)$ is $\wot\,$-$\,F_\sigma$. Moreover, if $T\in\bbx$ then
\[ T\in \mathcal V(A,x^*, \varepsilon)\iff \exists n\in\N\; \Bigl( \forall x\in B_{X}\;:\; \left\vert \pss{x^*}{(T-A)x}\right\vert\leq\varepsilon-\frac1n\Bigr).\] 

\noindent So each set $\mathcal V(A,x^*, \varepsilon)$ is $\wot\,$-$\, F_\sigma$, and hence $\mathcal U$ is $\wot\,$-$\, F_\sigma$.
\epf
\par\smallskip

From Lemma \ref{Lemma 3.1} and since all the relevant topologies are Polish, we immediately deduce the following corollary:

\begin{corollary}\label{CPCom} If $X^*$ is separable,  $\mathcal C(\emph{\sot,\,\sote})$ is a dense $G_\delta$ subset of $(\bbx,\emph{\sot})$. If $X$ is reflexive and separable, then $\mathcal C(\emph{\wot,\,\sot})$, $\mathcal C(\emph{\wot,\,\sotb})$ and $\mathcal C(\emph{\wot,\,\sote})$ are dense $G_\delta$ subsets of $(\bbx, \emph{\wot})$.
\end{corollary}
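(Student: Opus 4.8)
The plan is to derive Corollary~\ref{CPCom} directly from Lemma~\ref{Lemma 3.1} together with the classical fact quoted just before that lemma: if $E$ is a topological space whose open sets are all $F_\sigma$ (in particular, if $E$ is metrizable), $E'$ is second-countable, and $f:E\to E'$ is Borel~$1$, then the set of continuity points of $f$ is a dense $G_\delta$ subset of $E$ whenever $E$ is Baire. So the work reduces to checking, in each of the claimed cases, that the domain is Polish (hence Baire and metrizable), the target is second-countable, and the relevant identity map is Borel~$1$.

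First I would treat the \sote\ statement. Assume $X^*$ is separable. Then $(\bbx,\sot)$ is Polish (this is recalled in Section~\ref{Subsection 1.1}, using separability of $X$, which follows from separability of $X^*$), so it is metrizable and Baire; and $(\bbx,\sote)$ is Polish as well (again recalled in Section~\ref{Subsection 1.1}, since $X^*$ separable), hence second-countable. By Lemma~\ref{Lemma 3.1}(2) the map $\ide{\sot}{\sote}$ is Borel~$1$. Applying the quoted theorem on continuity points of Borel~$1$ maps (\cite[Theorem 24.14]{Ke}) with $E=(\bbx,\sot)$ and $E'=(\bbx,\sote)$, we conclude that $\mathcal C(\sot,\sote)$ is a dense $G_\delta$ subset of $(\bbx,\sot)$.

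Next the three \wot\ statements. Assume $X$ is reflexive and separable; then $X^*$ is also separable. In this case $(\bbx,\wot)$ is compact and metrizable, hence Polish (so Baire, metrizable); and $(\bbx,\sot)$, $(\bbx,\sotb)$, $(\bbx,\sote)$ are all Polish, hence second-countable. By Lemma~\ref{Lemma 3.1}(2) the maps $\ide{\wot}{\sot}$, $\ide{\wot}{\sotb}$ and $\ide{\wot}{\sote}$ are all Borel~$1$. Applying the same continuity-points theorem three times, with $E=(\bbx,\wot)$ and $E'$ equal to $(\bbx,\sot)$, $(\bbx,\sotb)$, $(\bbx,\sote)$ respectively, we obtain that $\mathcal C(\wot,\sot)$, $\mathcal C(\wot,\sotb)$ and $\mathcal C(\wot,\sote)$ are each a dense $G_\delta$ subset of $(\bbx,\wot)$.

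There is no real obstacle here: the corollary is a formal consequence of Lemma~\ref{Lemma 3.1} and a standard descriptive-set-theoretic fact, and the only points requiring a word of justification are the Polishness assertions, which are all stated as ``well known'' earlier in Section~\ref{Subsection 1.1}. If anything needs a little care it is simply keeping track of which hypothesis ($X^*$ separable versus $X$ reflexive separable) is needed for which conclusion, and noting that Lemma~\ref{Lemma 3.1}(2) indeed supplies all four Borel~$1$ statements used above.
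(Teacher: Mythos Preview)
Your proof is correct and takes essentially the same approach as the paper, which simply states that the corollary follows ``immediately'' from Lemma~\ref{Lemma 3.1} and the fact that all the relevant topologies are Polish. You have just spelled out the details of this deduction: Polishness of the domain gives Baire and metrizable, Polishness of the target gives second-countable, Lemma~\ref{Lemma 3.1} gives the Borel~$1$ property, and the cited classical fact on continuity points of Borel~$1$ maps does the rest.
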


\subsection{The Hilbertian case} When $X$ is the Hilbert space $\ell_2$, it turns out that one can identify the points of continuity for each one of the five possible pairings $(\tau,\tau')$ of the topologies \wot, \sot, \sotb, \sote\ such that $\tau\subseteq\tau'$. This will be used in Section \ref{SecThree}.
\begin{proposition}\label{CPH} Let $\mathbf M:=\mathcal B_1(\ell_2)$.
\be
\item[\rm (1)] $\mathcal C(\emph{\wot, \sot})$ is the set of all isometries.
\item[\rm (2)] $\mathcal C(\emph{\wot, \sote })$ is the set of all unitary operators.
\item[\rm (3)] $\mathcal C(\emph{\wot, \sotb})$ is the set of all co-isometries.
\item[\rm (4)] $\mathcal C(\emph{\sot, \sote})$ is the set of all co-isometries.
\item[\rm (5)] $\mathcal C(\emph{\sotb, \sote})$ is the set of all isometries.

\ee
\end{proposition}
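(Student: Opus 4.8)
The strategy is to exploit the two structural inputs we already have: first, Corollary \ref{CPCom}, which tells us that each of the five sets $\mathcal C(\tau,\tau')$ is a dense $G_\delta$ in $(\mathbf M,\tau)$ (so in particular it is non-empty and "large"); second, the elementary observation that for a net $(T_i)$ in $\mathcal B_1(\ell_2)$ converging $\wot$ to $T$, we have $T_i\xrightarrow{\sot}T$ if and only if $\Vert T_ix\Vert\to\Vert Tx\Vert$ for every $x$ (because in a Hilbert space, weak convergence together with convergence of norms forces norm convergence — this is the Hilbertian Kadec–Klee property, or just expanding $\Vert T_ix-Tx\Vert^2$). The plan is to prove (1) carefully and then obtain (2)–(5) by symmetric/adjoint arguments.

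\emph{The inclusion "isometry $\Rightarrow$ $\wot$--$\sot$ continuity point".} Let $T$ be an isometry and let $(T_i)\subseteq\mathbf M$ with $T_i\xrightarrow{\wot}T$. Fix $x\in\ell_2$. Since $\Vert T_i\Vert\le 1$ we have $\limsup_i\Vert T_ix\Vert\le\Vert x\Vert=\Vert Tx\Vert$, while weak lower semicontinuity of the norm applied to $T_ix\xrightarrow{w}Tx$ gives $\liminf_i\Vert T_ix\Vert\ge\Vert Tx\Vert$. Hence $\Vert T_ix\Vert\to\Vert Tx\Vert$, and by the Hilbert-space fact quoted above $T_ix\xrightarrow{\Vert\cdot\Vert}Tx$. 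As $x$ was arbitrary, $T_i\xrightarrow{\sot}T$, so $T\in\mathcal C(\wot,\sot)$.

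\emph{The converse inclusion.} This is the step I expect to be the real content. Suppose $T\in\mathbf M$ is \emph{not} an isometry, so there is $x_0$ with $\Vert x_0\Vert=1$ and $\Vert Tx_0\Vert<1$; I must produce a net $T_i\xrightarrow{\wot}T$ with $T_i\not\xrightarrow{\sot}T$, i.e. exhibit $T$ as a point of discontinuity. The idea is to add to $T$ a "wandering unitary tail" that is $\wot$-negligible but keeps the norm on $x_0$ equal to $1$: concretely, pick a sequence of unit vectors $f_k$ going weakly to $0$ with $f_k\perp Tx_0$ (possible since $\ell_2$ is infinite-dimensional), set $c:=\sqrt{1-\Vert Tx_0\Vert^2}>0$, and let $T_k:=T+c\,(f_k\otimes x_0')$, where $x_0'$ is chosen in $B_{\ell_2^*}$ with $\langle x_0',x_0\rangle=1$ and small on a prescribed finite set — more carefully, one works along a net indexed by finite-dimensional subspaces to ensure the perturbation is $\wot$-small while $\Vert T_kx_0\Vert^2=\Vert Tx_0\Vert^2+c^2=1$ (after checking the cross term vanishes, which is why $f_k\perp Tx_0$ was arranged, possibly after also choosing $x_0'$ to kill $\langle Tx_0',x_0\rangle$-type cross terms, or simply replacing $T$ by a finite-rank approximation on $x_0^{\perp}$). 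One also needs $\Vert T_k\Vert\le 1$; this can be forced by instead "rotating" within the two-dimensional space $\mathrm{span}(Tx_0,f_k)$ rather than adding, i.e. defining $T_k$ to agree with $T$ off a large finite-dimensional subspace and to act on $x_0$ as an isometry into $\mathrm{span}(Tx_0,f_k)$ — the clean way is: $T_k x := Tx$ for $x\perp x_0$ and $T_kx_0:=Tx_0+cf_k$, then check $T_k\in\mathcal B_1$ directly. Then $T_k\xrightarrow{\wot}T$ (the rank-one corrections tend weakly to $0$) but $\Vert T_kx_0\Vert=1\ne\Vert Tx_0\Vert$, so $T_kx_0\not\to Tx_0$ in norm.

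\emph{Deducing (2)–(5).} Items (3), (4), (5) follow from (1) and its proof by dualizing. Since $\sotb$ is the $\sot$-topology transported through $T\mapsto T^*$, and $T\mapsto T^*$ is a homeomorphism $(\mathbf M,\wot)\to(\mathbf M,\wot)$, $(\mathbf M,\sotb)\to(\mathbf M,\sot)$ and $(\mathbf M,\sote)\to(\mathbf M,\sote)$, we get $\mathcal C(\wot,\sotb)=\{T:\ T^*\in\mathcal C(\wot,\sot)\}=\{T:\ T^*\text{ isometry}\}=\{\text{co-isometries}\}$, which is (3); $\mathcal C(\sotb,\sote)=\{T:\ T^*\in\mathcal C(\sot,\sote)\}$, so (5) will follow once (4) is known, and (4) is the "adjoint version" of (1): $T\in\mathcal C(\sot,\sote)$ iff whenever $T_i\xrightarrow{\sot}T$ one also has $T_i^*\xrightarrow{\sot}T^*$, and the same norm-convergence argument (now the relevant norms are $\Vert T_i^*x^*\Vert$, controlled using $T_i^*\xrightarrow{\wot}T^*$ which is implied by $T_i\xrightarrow{\sot}T$) shows this holds exactly when $T^*$ is an isometry, i.e. $T$ is a co-isometry; the converse discontinuity construction is the adjoint of the one above. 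Finally (2): $\sote=\sot\vee\sotb$, and $\mathbf i_{\wot,\sote}$ is continuous at $T$ iff both $\mathbf i_{\wot,\sot}$ and $\mathbf i_{\wot,\sotb}$ are, so $\mathcal C(\wot,\sote)=\mathcal C(\wot,\sot)\cap\mathcal C(\wot,\sotb)=\{\text{isometries}\}\cap\{\text{co-isometries}\}=\{\text{unitaries}\}$. The one point to double-check in this last step is that continuity of a map into a sup-topology is equivalent to continuity into each factor, which is immediate from the definition of $\tau\vee\tau'$ and the fact that a basic $\sote$-neighbourhood is a finite intersection of $\sot$- and $\sotb$-neighbourhoods.

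The main obstacle is the converse inclusion in (1): arranging a $\wot$-convergent net (or sequence) of genuine \emph{contractions} that destroys $\sot$-convergence requires the perturbation to simultaneously (i) vanish $\wot$, (ii) preserve the norm bound $\le 1$, and (iii) bump $\Vert T_ix_0\Vert$ up to $1$; points (ii) and (iii) pull in opposite directions and force the "rotation inside a growing tail" construction rather than a naive additive perturbation, together with a careful check that the cross terms with $Tx_0$ can be annihilated.
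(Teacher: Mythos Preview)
Your reductions of (2), (3), (5) to (1) and (4), and your ``easy'' inclusions (isometries $\subseteq\mathcal C(\wot,\sot)$ and co-isometries $\subseteq\mathcal C(\sot,\sote)$) are correct and essentially identical to the paper's. The genuine gap is in your converse for (1): the construction $T_k x_0:=Tx_0+cf_k$, $T_k=T$ on $x_0^\perp$, does \emph{not} produce a contraction in general. The cross term you worry about is not $\langle Tx_0,f_k\rangle$ but rather $\langle Ty,f_k\rangle$ for $y\perp x_0$, and this cannot be killed by choosing $f_k\perp Tx_0$ alone. Concretely, take $T=\tfrac12 I$, $x_0=e_0$, $c=\sqrt{3}/2$, $f_k=e_k$: then for $x=e_0+e_k$ one computes $\Vert T_kx\Vert^2=(5+2\sqrt3)/4>2=\Vert x\Vert^2$. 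Arranging $f_k$ orthogonal to the whole range of $T$ is impossible when $T$ has dense range, and a net indexed by finite-dimensional subspaces still leaves the contraction bound to fail on vectors outside the index set. Since your converse for (4) is ``the adjoint of the one above'', it inherits the same gap.

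The paper avoids the direct construction entirely. It uses Lemma~\ref{tropsimple}\,(b): if a set $D$ is $\tau$-dense and $\tau'$-closed, then $D\supseteq\mathcal C(\tau,\tau')$. The set $\mathcal I$ of isometries is trivially \sot-closed, and the set $\mathcal I_*$ of co-isometries is \sote-closed; so it suffices to show $\mathcal I$ is \wot-dense and $\mathcal I_*$ is \sot-dense. Since \wot\ is self-adjoint and weaker than \sot, both reduce to showing $\mathcal I_*$ is \sot-dense. This is done by an explicit formula: given $T\in\mathcal B_1(\ell_2)$, set $A_N:=TP_N$ and
\[
T_N:=A_N+(I-A_NA_N^*)^{1/2}B^{N+1},
\]
where $B$ is the backward shift. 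One checks directly that $T_NT_N^*=I$ (so $T_N\in\mathcal I_*$) and $T_N\xrightarrow{\sot}T$. This single density fact then yields both converse inclusions at once via Lemma~\ref{tropsimple}\,(b), without ever needing to build a discontinuity witness at a specific non-isometry $T$.
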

\bpf Since the topology \wot\ is self-adjoint, it is rather clear that an operator $T$ belongs to $\mathcal C({\wot, \sote })$ if and only if $T$ and $T^*$ both belong to $\mathcal C({\wot, \sot })$. So (2) follows from (1). Also, $T$ belongs to $\mathcal C({\wot, \sotb })$ if and only if $T^*$  belongs to $\mathcal C({\wot, \sot })$, so (3) follows from (1). Finally, since \sote\ is self-adjoint, $T$ belongs to $\mathcal C({\sotb, \sote })$ if and only if $T^*$ belongs to $\mathcal C({\sot, \sote })$, so  (5) follows from   (4).
Hence we concentrate on the proofs of assertions (1) and (4).

\smallskip Since the norm of $\ell_2$ has the {Kadec-Klee Property}, it is clear that if $(T_n)$ is a sequence of operators in $\mathcal B_1(\ell_2)$ such that $T_n\xrightarrow{\wot} J$ where $J$ is an isometry, then in fact $T_n\xrightarrow{\sot}J$. In other words, any isometry belongs to $\mathcal C({\wot, \sot})$. Since \sot$\,$-$\,$convergence implies \wot$\,$-$\,$convergence for the adjoints, this implies that any co-isometry belongs to $\mathcal C({\sot,\sote})$.

\smallskip Let us denote by $\mathcal I$ the set of all isometries, and by $\mathcal I_*$ the set of all co-isometries. By what has just been said, it remains to prove that $\mathcal I\supseteq \mathcal C({\wot, \sot})$ and that $\mathcal I_*\supseteq \mathcal C({\sot, \sote})$. Moreover since $\mathcal I$ is obviously \sot$\,$-$\,$closed in $\mathcal B_1(\ell_2)$ and since $\mathcal I_*$ is \sote -$\,$closed, Lemma \ref{tropsimple} (b) tells us that we just have to check the following facts: $\mathcal I$ is \wot$\,$-$\,$dense in $\mathcal B_1(\ell_2)$, and $\mathcal I_*$ is \sot$\,$-$\,$dense. Finally, since the topology \wot\ is self-adjoint and weaker than \sot, it is in fact enough to show that $\mathcal I_*$ is \sot$\,$-$\,$dense.

\smallskip Let $T\in\mathcal B_1(\ell_2)$ be arbitrary. For any $N\in\mathbb N$, set $A_N:= TP_N$ and 
\[ T_N:= A_N+ (I-A_NA_N^*)^{1/2} B^{N+1},\]
where $B$ is the canonical backward shift on $\ell_2$. A straightforward computation shows that $T_N^*$ is an isometry, \mbox{\it i.e.} $T_N\in\mathcal I_*$. Moreover, since $A_N\xrightarrow{\sot} T$ and $\Vert I-A_NA_N^*\Vert\leq 1$, it is also clear that $T_N\xrightarrow{\sot}T$. This concludes the proof of Proposition \ref{CPH}.
\epf

\begin{remark}\label{blabla} Let $X$ be a reflexive (separable) Banach space, and let $\mathbf M:=\bbx$. The beginning of the above proof shows the following: if the norm of $X$ has the Kadec-Klee property, then any isometry $J\in\bbx$ belongs to $\mathcal C(\wot,\sot)$. It follows that if the norm of $X^*$ has the Kadec-Klee property, then any co-isometry belongs to $\mathcal C(\wot,\sotb)$ and hence to $\mathcal C(\sot,\sote)$.
\end{remark}

\subsection{Norming vectors and points of continuity}\label{norming} In this sub-section, we elaborate on Remark \ref{blabla} above. In particular, we are going to show  that if $X$ is a reflexive (separable) Banach space with the Kadec-Klee property, then any operator $T\in\bbx$ with enough {norming vectors}  belongs to $\mathcal C(\wot,\sot)$. This will turn out to be crucial for the proof of Theorem \ref{Theorem C} (see Section \ref{p=3}).
\par\medskip
Given a (separable) Banach space $X$, let us denote by $\mathcal C(w, \Vert\,\cdot\,\Vert)$ the set of all points of continuity of the identity map 
$\mathbf i_{w,\Vert \,\cdot\,\Vert}:(B_X,w)\to (B_X,\Vert\,\cdot\,\Vert)$. Note that $\mathcal C(w, \Vert\,\cdot\,\Vert)$ is contained in the unit sphere $S_X$. Moreover, if $X$ is reflexive then $\mathcal C(w, \Vert\,\cdot\,\Vert)$ is dense in $(B_X,w)$ because $(B_X,w)$ is compact metrizable (hence Polish) and the identity map $\mathbf i_{w,\Vert \,\cdot\,\Vert}$ is Borel $1$.
\bpr\label{abstractnonsense} Let $X$ be reflexive, and let $\mathbf M:=\bbx$.  If $T\in\bbx$ is such that ${\rm span}\bigl\{ z\in S_X; \; Tz\in\mathcal C(w,\Vert\,\cdot\,\Vert)\bigr\}$ is dense in $X$, then $T\in\mathcal C(\emph{\wot},\emph{\sot})$.
\epr
\bpf Let $(T_n)$ be a sequence in $\bbx$ such that $T_n\xrightarrow{\wot}T$. We have to show that $T_n\xrightarrow{\sot} T$, \mbox{\it i.e.} that $T_nx\to Tx$ for every $x\in X$. Now, if $z\in S_X$ is such that  $Tz\in\mathcal C(w,\Vert\,\cdot\,\Vert)$, then obviously $T_n z\xrightarrow{\Vert\,\cdot\,\Vert} Tz$. Since the sequence $(T_n)$ is bounded, the result follows.
\epf

\smallskip 
Recall that if $T\in\bx$, a vector $z\in X\setminus\{ 0\}$ is said to be \emph{norming} for $T$ if $\Vert Tz\Vert=\Vert T\Vert \Vert z\Vert$. We denote by $\mathcal N(T)$ the set of all norming vectors for $T$:
\[ \mathcal N(T):=\bigl\{ z\in X\setminus\{ 0\};\; \Vert Tz\Vert=\Vert T\Vert \Vert z\Vert\bigr\}.\]

\bpr\label{KK3} Let $X$ be reflexive, and assume that $X$ has the Kadec-Klee property. If $T\in\bbx$ is such that $\Vert T\Vert=1$ and ${\rm span}\bigl( \mathcal N(T)\bigr)$ is dense in $X$, then $T\in\mathcal C(\emph{\wot, \sot})$.
\epr
\bpf This follows immediately from Proposition \ref{abstractnonsense}. Indeed, by the Kadec-Klee property, we have $\mathcal C(w,\Vert\,\cdot\,\Vert)=S_X$; and this means that the set $\bigl\{ z\in S_X; \; Tz\in\mathcal C(w,\Vert\,\cdot\,\Vert)\bigr\}$ is equal to $\mathcal N(T)\cap S_X$. 
\epf

Proposition \ref{KK3} has the following consequence, which will be our main tool for proving Theorem \ref{Theorem C} in Section \ref{p=3}.

\bco\label{KK&meager} Let $X$ be reflexive, and assume that $X$ has the Kadec-Klee property. If the set $\bigl\{ T\in\bbx;\; \overline{{\rm span}}\bigl( \mathcal N(T)\bigr)=X\bigr\}$ is $\emph{\sot}\,$-$\,$dense in $\bbx$, then the topologies \emph{\wot} and \emph{\sot} on $\bbx$ are similar.
\eco
\bpf Under the above assumption, $\mathcal C(\wot,\sot)$ is \sot$\,$-$\,$dense in $\bbx$ by Proposition \ref{KK3}; so we may apply Lemma \ref{tropsimple}.
\epf

\smallskip Finally, here is another, perhaps unexpected, consequence of Proposition \ref{KK3}. 
\bco If $T\in\mathcal B_1(\ell_2)$ is such that $\Vert T\Vert=1$ and $\overline{\rm span}\,\bigl( \mathcal N(T)\bigr)=\ell_2$, then $T$ is an isometry.
\eco
\bpf This follows from Proposition \ref{KK3} and part (1) of Proposition \ref{CPH}. 
\epf

\section{Real or complex $\ell_p\,$-$\,$spaces with $p\neq 2$}\label{SecFour}
Our aim in this section is to prove Theorem \ref{Theorem A}. The key point is the following proposition.

\begin{proposition}\label{lp} 
Let $M=\mathcal B_1(\ell_p)$. If $p>2$, then $\mathcal C(\emph{\wot}, \emph{\sotb})$ and $\mathcal C(\emph{\sot}, \emph{\sote})$ are $\emph{\sote}$-$\,$dense in $M$; and if $1<p<2$, then  $\mathcal C(\emph{\wot}, \emph{\sot})$ and 
$\mathcal C(\emph{\sotb}, \emph{\sote})$ are $\emph{\sote}$-$\,$dense in $M$. 
\end{proposition}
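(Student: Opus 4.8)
The plan is to deduce everything from a single assertion: \emph{for every $1<r<2$, the set $\mathcal C(\wot,\sot)$ is $\sote$-dense in $\mathcal B_1(\ell_r)$}. The reduction to this is by duality together with elementary inclusions between the relevant sets of continuity points.

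First, the two halves of the statement are equivalent. Writing $1/p+1/q=1$, the adjoint map $T\mapsto T^{*}$ is a bijection of $\mathcal B_1(\ell_p)$ onto $\mathcal B_1(\ell_q)$; since $\wot$ and $\sote=\sot\vee\sotb$ are self-adjoint while $\sot$ and $\sotb$ are interchanged by taking adjoints, this bijection is a homeomorphism for $\wot$ and for $\sote$, and more generally it carries $\mathcal C(\tau,\tau')$ onto $\mathcal C(\widetilde\tau,\widetilde{\tau'})$, where $\widetilde{(\cdot)}$ swaps $\sot$ and $\sotb$ and fixes $\wot$ and $\sote$. In particular it preserves $\sote$-density and sends the ``$p>2$'' assertion to the ``$q<2$'' one. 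So I would fix $p>2$ and aim to show that $\mathcal C(\wot,\sotb)$ and $\mathcal C(\sot,\sote)$ are $\sote$-dense in $M=\mathcal B_1(\ell_p)$.

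Next, $\mathcal C(\wot,\sotb)\subseteq\mathcal C(\sot,\sote)$: if $T_n\xrightarrow{\sot}T$ then $T_n\xrightarrow{\wot}T$, so if $T\in\mathcal C(\wot,\sotb)$ then $T_n^{*}\xrightarrow{\sot}T^{*}$, whence $T_n\xrightarrow{\sote}T$. Hence it is enough to prove that $\mathcal C(\wot,\sotb)$ is $\sote$-dense in $M$; and by the adjoint homeomorphism above, $T\in\mathcal C(\wot,\sotb)$ in $\mathcal B_1(\ell_p)$ if and only if $T^{*}\in\mathcal C(\wot,\sot)$ in $\mathcal B_1(\ell_q)$, with $\sote$-density transported. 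So the whole proposition reduces to the boxed assertion with $r=q<2$. To attack it I would use the norming-vector machinery: $\ell_r$ is reflexive with the Kadec--Klee property, so Proposition \ref{KK3} gives $\mathcal C(\wot,\sot)\supseteq\{S\in\mathcal B_1(\ell_r):\Vert S\Vert=1,\ \overline{\sp}\,\mathcal N(S)=\ell_r\}$, and it suffices to show this family of richly norm-attaining contractions is $\sote$-dense. Using $\sote$-continuity of $T\mapsto Te_l$ and $T\mapsto T^{*}e_l^{*}$, density of finitely supported vectors, and the fact that $P_M TP_{M'}e_l\to Te_l$ and $(P_M TP_{M'})^{*}e_l^{*}\to T^{*}e_l^{*}$ as $M,M'\to\infty$, this reduces to a finite-block problem: given a contraction supported in $\ell_r^m$, a tolerance $\varepsilon>0$ and $N\le m$, produce $S\in\mathcal B_1(\ell_r)$ with $\Vert S\Vert=1$, $\overline{\sp}\,\mathcal N(S)=\ell_r$, and $\Vert Se_l-Te_l\Vert,\ \Vert S^{*}e_l^{*}-T^{*}e_l^{*}\Vert<\varepsilon$ for $l\le N$.

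This last finite-block construction is the genuine obstacle, and it is exactly here that the hypothesis $r<2$ (equivalently $p>2$) and the techniques of \cite{GMM2} are needed. The obvious candidates fail: block-diagonal operators with an isometric tail, or operators all of whose columns have norm $1$, are forced (in $\ell_r$ with $r\ne 2$) either to be isometries or to violate the contractivity bound, and on $\ell_2$ a norm-one operator whose norming vectors span the space is necessarily an isometry (the corollary recorded just after Proposition \ref{KK3}), so no ``soft'' argument can work. What one must do instead --- and what \cite{GMM2} supplies for $r<2$ --- is, starting from the given finite contraction, to build a small perturbation that attains its norm along a whole spanning set of mutually ``spread-out'' vectors; one then patches this finite piece together with a coordinate shift on the remaining coordinates (an isometry, all of whose vectors are norming) to get the required $S$, and invokes Lemma \ref{tropsimple} to finish. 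I expect the bulk of the real work, and the only place where $p=2$ is genuinely excluded in a one-sided fashion, to be this perturbation argument; everything else is bookkeeping. (Alternatively, one might bypass the norming-vector detour and, following \cite{GMM2} directly, exhibit a concrete $\sote$-dense family of operators $T$ and check by hand that $T_n\xrightarrow{\wot}T$ forces $T_n^{*}\xrightarrow{\sot}T^{*}$ for such $T$; but Propositions \ref{abstractnonsense} and \ref{KK3} make the norming-vector route the cleaner device.)
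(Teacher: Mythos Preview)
Your initial reductions are correct and match the paper: duality swaps the two halves, and $\mathcal C(\wot,\sotb)\subseteq\mathcal C(\sot,\sote)$, so it suffices to prove that $\mathcal C(\wot,\sotb)$ is \sote-dense in $\mathcal B_1(\ell_p)$ for $p>2$. But from that point on your main route goes wrong.

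You dualize once more to $\ell_q$ with $q<2$ and try to apply Proposition~\ref{KK3}, which would require showing that contractions $S$ on $\ell_q$ with $\overline{\sp}\,\mathcal N(S)=\ell_q$ are \sote-dense. This is \emph{not} supplied by \cite{GMM2}, and it is in fact a far stronger statement than what is needed here: it is essentially the content of Proposition~\ref{keymachin} (on the dual side), which is the hard core of Theorem~\ref{Theorem C} and is only established in the paper for the single real value $p=3$. The results actually available from \cite{GMM2} are Lemmas~\ref{5.15} and~\ref{5.16}; the latter holds only for $p>2$, and it gives a stability estimate around an operator with \emph{one} well-placed norming vector, not a spanning family of them. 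Your parenthetical ``alternative'' is closer to the right idea, but still overshoots: the paper does not exhibit explicit operators lying in $\mathcal C(\wot,\sotb)$ at all.

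What the paper does instead is a Baire argument, working directly on $\ell_p$, $p>2$. One fixes a metric $d_*$ generating \sotb, sets $\mathcal V_\eta:=\{B:\ \exists\ \hbox{\wot-neighbourhood $\mathcal W$ of $B$ with } d_*\hbox{-diam}(\mathcal W)<\eta\}$, and notes that $\mathcal C(\wot,\sotb)=\bigcap_k\mathcal V_{1/k}$ with each $\mathcal V_\eta$ \wot-open, hence \sote-open. It then suffices to show each $\mathcal V_\eta$ is \sote-dense. Given a nonempty \sote-open set $\mathcal U$, Lemma~\ref{5.15} produces a finite-rank $B\in\mathcal U\cap\mathcal B_1(E_M)$ with $\Vert B\Vert=1$ and a single norming vector $z$ such that $Bz$ has full support in $E_M$; Lemma~\ref{5.16} (this is where $p>2$ enters) then says that $\Vert P_M(T-B)P_M\Vert<\delta$ forces $\Vert P_M(T-B)\Vert<\varepsilon$, so a suitable \wot-neighbourhood of $B$ has small $d_*$-diameter, i.e.\ $B\in\mathcal V_\eta$. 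The crucial point you missed is that one never needs $B$ itself to be a point of continuity, nor to have many norming vectors; one only needs $B\in\mathcal V_\eta$ for each fixed $\eta$, and that is a much weaker requirement.
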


\smallskip The proof of Proposition \ref{lp} relies on Propositions 5.15 and 5.16 from \cite{GMM2}. For the sake of readability, we restate here what is precisely needed. Recall that if $M\in\Z_+$ then $E_M=\sp(e_0,\dots ,e_M)\subseteq \ell_p$. Identifying an operator $B\in\mathcal B(E_M)$ with $P_MBP_M\in\mathcal B(\ell_p)$, we consider $\mathcal B(E_M)$ as a subspace of $\mathcal B(\ell_p)$.

\blm\label{5.15} Let $1<p<\infty$. For any \emph{\sote}-$\,$open set $\mathcal U\neq \emptyset$ in $\mathcal B_1(\ell_p)$ and any $n_0\in\Z_+$, one can find $M\geq n_0$ and $B\in\mathcal B_1(E_M)$ such that $B\in\mathcal U$ and $B$ has the following additional properties: $\Vert B\Vert=1$, and $B$ admits a norming vector $z\in E_M$ such that $\langle e_m^*, Bz\rangle\neq 0$ for all $m\in\llb 0,M\rrb$.

\elm
\bpf This follows from \cite[Proposition 5.15]{GMM2}. 
\epf

\smallskip
\blm\label{5.16} Assume that $p>2$. Let $M\in\Z_+$, and let $B\in\mathcal B_1(E_M)$. Assume that $\Vert B\Vert=1$, and that $B$ admits a norming vector $z\in E_M$ such that $\langle e_m^*, Bz\rangle\neq 0$ for all $m\in\llb 0,M\rrb$. For any $\varepsilon >0$, there exists $\delta >0$ such that the following holds true: if $ T\in {\mathcal{B}}_{1}(\ell_p)$ satisfies $\Vert P_{M}(T-B)P_{M}\Vert<\delta$, then $\Vert P_{M}(T-B) \Vert<\varepsilon$.
\elm
\bpf This is essentially \cite[Proposition 5.16]{GMM2}. We outline here an argument, which is somewhat simpler than the one provided in  \cite{GMM2}.

\smallskip Everything relies on the following elementary fact: since $p\geq 2$, there exists a constant $\alpha >0$ such that $\vert 1+w\vert^p+\vert 1-w\vert^p \geq 2+\alpha \vert w\vert^2$ for all $w\in \C$. More precisely, by  \cite[Lemma 2.1]{Kan} one can take $\alpha:=p$, so that (by homogeneity)
\begin{equation}\label{Kanformula} \vert u+v\vert^p +\vert u-v\vert^p\geq 2\vert u\vert^p+p\vert u\vert^{p-2} \vert v\vert^2\qquad\hbox{for all $u,v\in\C$}. 
\end{equation}

\smallskip Let us fix $\varepsilon>0$. 
Let also $\delta >0$ to be specified, and let $T\in\mathcal B_{1}(\ell_p)$ be such that $\Vert P_{M}(T-B)P_{M}\Vert<\delta $. Since $B=P_MBP_M$, it is enough to show that if $\delta$ is small enough, then 
$\Vert P_MT(I-P_M)\Vert <\varepsilon$. 

\smallskip
Let 
$z\in S_{E_{M}}$ be a norming vector for $B$ such that $\langle e_m^*, Bz\rangle\neq 0$ for all $m\in\llb 0,M\rrb$. Since $\Vert Bz\Vert=1$, $z\in E_M$ and $P_MBP_M=B$, we have $\Vert P_MTz\Vert >1-\delta$ and, if $\delta$ is small enough, $\vert \langle e_m^* , Tz\rangle\vert\geq \gamma/2$ for all $m\in\llb 0,M\rrb$, where $\gamma:= \min\,\bigl\{ \vert \langle e_m^* , Bz\rangle\vert;\; m\in\llb 0,M\rrb\bigr\}>0$.

\smallskip Now, let $x\in \ell_p$ with $\Vert x\Vert=1$, and let $y:= (I-P_M)x$. By $(\ref{Kanformula})$, 
we have for every $s>0$ and all $m\in\llb 0,M\rrb$:
\begin{align*}
 ps^{2}\, \vert\pss{e_{m}^{*}}{Tz}\vert^{p-2}\,\vert\pss{e_{m}^{*}}{Ty}\vert^{2}\le
 \vert \,\pss{e_{m}^{*}}{Tz+s Ty}\,\vert ^{p}+\vert \,\pss{e_{m}^{*}}{Tz-s Ty}\,\vert ^{p}
 -2\,\vert \,\pss{e_{m}^{*}}{Tz}\,\vert ^{p}\!.
\end{align*}
Summing over $m\in\llb 0,M\rrb$ and assuming that $\delta$ is small enough, it follows that
\begin{align*}
 ps ^{2}\,(\gamma/2)^{p-2}\sum_{m=0}^M\vert\,\pss{e_{j}^{*}}{Ty}\,\vert ^{2}&\le\Vert P_MT(z+s y)\Vert^{p}+\Vert P_MT(z-s y)\Vert^{p}-2\,\Vert P_MTz\Vert^{p}\\
&\le\Vert z+sy\Vert^{p}+\Vert z-s y\Vert^{p}-2(1-\delta)^{p}.
\end{align*}
Therefore, since $\Vert z\pm s y\Vert^p=\Vert z\Vert^p+s^p \Vert y\Vert^p\leq 1+s^p$ and $\sum_{m=0}^M\vert\,\pss{e_{m}^{*}}{Ty}\,\vert ^{2}\geq \Vert P_M Ty\Vert^2$ (because $p\geq 2$), we obtain, setting $c:= p(\gamma/2)^{p-2}$ and $\eta:= 1-(1-\delta)^p$, that
\[ \Vert P_MT(I-P_M)x\Vert^2\leq \frac2{c}\,\left( \frac{\eta}{s^2} +s^{p-2}\right),\]
for any $x\in S_{\ell_p}$ and all $s>0$. 

Optimising with respect to $s$, \mbox{\it i.e.} taking $s:=\left(\frac{2\eta}{p-2}\right)^{1/p}$, it follows that 
\[ \Vert P_MT(I-P_M)\Vert^2 \leq K\, \eta^{1-\frac2p},\]
for some constant $K$ independent of $T$ and $\eta$. Since $\eta =1-(1-\delta)^p$, this shows that $\Vert P_MT(I-P_M)\Vert<\varepsilon$ if $\delta$ is small enough.
\epf

\begin{proof}[Proof of Proposition \ref{lp}] By duality, it is enough to consider the case $p>2$. Moreover, since \sot\ is stronger than \wot\ and since $\sote=\sot\vee\sotb$, it is clear that $\mathcal C(\wot,\sotb)\subseteq \mathcal C(\sot,\sote)$. So it is enough to show that  $\mathcal C(\wot,\sotb)$ is \sote -$\,$dense in $\mathbf M=\mathcal B_1(\ell_p)$, $p>2$.

\smallskip For any $A,B\in \mathcal B_1(\ell_p)$, let us set
\[ d_*(A,B):=\sum_{n=0}^\infty 2^{-n} \Vert P_n(B-A)\Vert .\]

Since $\Vert P_n(B-A)\Vert =\Vert (B^*-A^*)P_n^*\Vert$ for all $n\geq 0$, the metric $d_*$ generates the topology $\sotb$. Therefore, if we define the sets
\[ \mathcal V_\eta=\Bigl\{ B\in\mathcal B_1(\ell_p);\; \exists \mathcal W\;\hbox{\wot$\,$-$\,$neighbourhood of $B$}\;:\;  d_*\hbox{-}\,{\rm diam}\bigl( \mathcal W\bigr)<\eta\Bigr\},\]
then 
\[ \mathcal C(\wot,\sotb)=\bigcap_{\eta >0} \mathcal V_{\eta}=\bigcap_{k\in\N} \mathcal V_{1/k}.\]

Now, the sets $\mathcal V_\eta$ are \wot$\,$-$\,$open in $\mathcal B_1(\ell_p)$, hence \sote -$\,$open. So, by the Baire Category Theorem applied to $(\mathcal B_1(\ell_p),\sote)$, it is enough to show that $\mathcal V_\eta$ is \sote -$\,$dense in $\mathcal B_1(\ell_p)$ for every $\eta >0$.

Let us fix $\eta >0$, and let $\mathcal U$ be a non-empty open set in $(\mathcal B_1(\ell_p),\sote)$. Choose $n_0\in\N$ such that $\sum_{n> n_0} 2^{-n}<\eta/4$. By Lemmas \ref{5.15} and \ref{5.16}, one can find  an operator $B\in  \mathcal U$ and an integer $M\geq n_0$ such that the following holds true: for any $\varepsilon >0$, there exists $\delta >0$ such that for any $T\in\mathcal B_1(\ell_p)$:
\[ \Vert P_M(T-B)P_M\Vert <\delta\implies \Vert P_M (T-B)\Vert <\varepsilon. \]

Now, choose $\varepsilon:=\eta/8$, let $\delta$ satisfy the above property, and define
\[ \mathcal W:=\bigl\{ T\in\mathcal B_1(\ell_p);\; \Vert P_M(T-B)P_M\Vert <\delta.\]

The set $\mathcal W$ is a \wot$\,$-$\,$neighbourhood of $B$; and by the choice of $\delta$, we have
\begin{align*} d_*\,\hbox{-}\,{\rm diam}(\mathcal W)&\leq 2\sum_{n=0}^M 2^{-n}\,\eta/8 +2\sum_{n>M} 2^{-n}<\eta.
\end{align*}
This concludes the proof.\epf

\par\smallskip
We can now prove Theorem \ref{Theorem A}, which we restate here 
as Theorem \ref{samemeager}.
\par\smallskip
\bth\label{samemeager} On $\mathbf M=\mathcal B_1(\ell_p)$, $p>2$, the topologies \emph{\sot}  and \emph{\sote} are similar and the topologies \emph{\wot} and \emph{\sotb}\ are similar. On 
$\mathbf M=\mathcal B_1(\ell_p)$, $1<p<2$, the topologies \emph{\sotb}  and \emph{\sote} are similar and the topologies \emph{\wot} and \emph{\sot}\ are similar. 
\eth
\bpf This follows immediately from Proposition \ref{lp} and Lemma \ref{tropsimple}.
\epf
\par\smallskip
As a direct corollary, we retrieve the result from \cite[Theorem 2.3]{GMM2} that in the complex case and for \(p>2\), a typical \(T\in(\bbu(\ell_{p}),\sot)\) is such that the operators \(2T\) and \(2T^{*}\) are hypercyclic, and obtain an analogous statement for \(1<p<2\):
\par\smallskip
\bco Let $\K=\C$. If $p>2$, then an \emph{\sot}$\,$-$\,$typical $T\in\mathcal B_1(\ell_p)$ is such that $2T$ and $2T^*$ are hypercyclic, so that in particular $T-\lambda I$ is one-to-one with dense range for every $\lambda\in\C$. If $1<p<2$, then an \emph{\sotb}-$\,$typical $T\in\mathcal B_1(\ell_p)$ is such that $2T$ and $2T^*$ are hypercyclic.
\eco
\bpf By \cite[Proposition 2.3]{GMM1}, we know that for any $1<p<\infty$, an \sote -$\,$typical $T\in\mathcal B_1(\ell_p)$ is such that $2T$ and $2T^*$ are hypercyclic. So this is also true for an $\sot\,$-$\,$typical $T$ if $p>2$ and for an $\sotb\,$-$\,$typical $T$ if $1<p<2$.
\epf
\par\smallskip
We also derive the following result for \(\wot\,\)-$\,$typical contractions on \(\ell_{p}\):
\par\smallskip

\bco Let $\K=\C$. If $1<p<2$, then a typical $T\in(\mathcal B_1(\ell_p),\emph{\wot})$ is such that $T-\lambda I$ has dense range for every $\lambda \in\C$. If $p>2$, then a typical $T\in(\mathcal B_1(\ell_p), \emph{\wot})$ has no eigenvalue.
\eco
\bpf By \cite[Proposition 2.3]{GMM1} again, an \sot$\,$-$\,$typical $T\in\mathcal B_1(\ell_p)$ is such that $2T$ is hypercyclic, and hence such that $T-\lambda I$ has dense range for every $\lambda\in\C$. So this is also true for a \wot$\,$-$\,$typical $T\in\mathcal B_1(\ell_p)$ if $1<p<2$. The second statement follows by duality.
\epf
\par\smallskip
Finally, one can  deduce from Theorem \ref{samemeager} combined with Proposition \ref{sigmae} below that a \wot$\,$-$\,$typical contraction on $\ell_p$, $p\neq 2$ has maximal essential spectrum (which  is to be compared with the forthcoming Proposition \ref{Eisner}):
\bco Let $\K=\C$ and $X=\ell_p$, $1<p\neq 2<\infty$. The essential spectrum of a typical $T\in (\bbx, \emph{\wot})$ is equal to $\overline{\,\D}$.
\eco
\bpf Since the property ``to have an essential spectrum equal to $\overline{\,\D}$'' is self-adjoint, this follows from Proposition \ref{sigmae} and Theorem \ref{samemeager}.
\epf

\section{Real $\ell_p$ with $p=3$ or $3/2$}\label{p=3}

In this section, we consider \emph{real} $\ell_p\,$-$\,$spaces, which we still denote by $\ell_p$. Our aim is to prove Theorem \ref{Theorem C}, which we restate as follows.
\bth\label{toutcapourca} If $p=3$ or $3/2$, then the topologies  \emph{$\wot$}  and \emph{$\sote$} on $\mathcal B_1(\ell_p)$ are similar.
\eth

\smallskip  Note that Theorem \ref{toutcapourca} is indeed equivalent to Theorem \ref{Theorem C}: if $\tau$ is any topology lying between {$\wot$}  and $\sote$, and if we know that {$\wot$}  and {$\sote$} have the same dense sets (which is the content of Theorem \ref{toutcapourca}), then {$\sote$} and $\tau$ clearly have the same dense sets. 

\smallskip 
Observe next that since $3$ and $3/2$ are conjugate exponents, the case $p=3/2$ follows from the case $p=3$ by duality. Also, by part (1a) of Theorem \ref{Theorem A}, we already know that for any $p>2$, $\sote$ is similar to $\sot$  on $\bbu(\ell_p)$. Hence, to prove Theorem \ref{toutcapourca}, it is enough to show that if $p=3$, then $\wot$ and $\sot$ are similar. Moreover, by Corollary \ref{KK&meager}, it is in fact enough to prove the following proposition. 
\bpr\label{keymachin} If $p=3$, then the set of all $T\in\mathcal B_1(\ell_p)$ such that $\overline\sp\,\bigl (\mathcal N(T)\bigr)=\ell_p$ is \emph{$\sot\,$-$\,$}dense in $\mathcal B_1(\ell_p)$.
\epr

\smallskip
Since the proof of Proposition \ref{keymachin} is rather convoluted, we divide this section into small sub-sections. We have tried to indicate very precisely where the assumptions that $p=3$ and $\K=\R$ seem to be really crucial. However, we would be extremely surprised if $p=3$ were the only value of $p$ for which Proposition \ref{keymachin} holds true. We would rather be inclined to believe that in fact, Proposition \ref{keymachin} holds true for all $p>2$, so that (by duality) the answer to Question \ref{Question B} is ``Yes'', in the real case at least. 

\subsection{Notation and terminology} Recall that $(e_n)_{n\in\Z_+}$ is the canonical basis of $\ell_p$, and that $(e_n^*)$ is the associated sequence of coordinate functionals. If $N\in\Z_+$, we set $E_N:=\sp(e_0,\dots ,e_N)$, and we denote by $P_N$ the canonical projection of $\ell_p$ onto $E_N$. If $N,M\in\Z_+$, we denote by $\mathcal B(E_N,E_M)$ the space of all (bounded) operators $S:E_N\to E_M$. Whenever this seems convenient, we consider $\mathcal B(E_N,E_M)$ as a subspace of $\mathcal B(\ell_p)$ or of $\mathcal B(E_N,\ell_p)$ \textit{via} the obvious identifications. Finally, if $x\in\ell_p$, the \emph{support} of $x$, denoted by ${\rm supp}(x)$, is the set $\{ n\in\Z_+;\; \langle e_n^*,x\rangle\neq 0\}$.

\subsection{Two useful facts} The following lemma, which is part of \cite[Theorem 2.2]{Kan}, will be used a number of times in the proof of Proposition \ref{keymachin}. We give a proof for completeness' sake (which is not the same as the one in \cite{Kan}).

\blm\label{Kansupp} Let $p>2$, let $E$ be a subspace of $\ell_p$, and let $S\in\mathcal B(E,\ell_p)$. Let also $x\in E$ be a norming vector for $S$. If $y\in E$ is such that ${\rm supp}(y)\cap {\rm supp}(x)=\emptyset$, then ${\rm supp}(Sy)\cap{\rm supp}(Sx)=\emptyset$.
\elm
\bpf We prove in fact the following more general result. \emph{Let $E$ be a real or complex Banach space, let $(\Omega,\mu)$ be a measure space, and let $S:E\to L_p(\Omega,\mu)$ be a bounded operator. Let also $x\in\mathcal N(S)$. If $y\in E$ is such that $(x,y)$ is a
sub$\,$-$\,\ell_q\,$-$\,$sequence for some $q>2$, \mbox{\emph{i.e}} $\Vert ax+by\Vert^q\leq \Vert ax\Vert^q+\Vert by\Vert^q$ for all $a,b\in\K$, then $(Sx)(Sy)=0$.} 

\smallskip We may assume that  $\Vert S\Vert=1$ and $\Vert x\Vert=1=\Vert y\Vert$. Then
\[ \Vert Sx+ t Sy\Vert_p^p\leq \Vert x+ty\Vert^p\leq \bigr(1+ \vert t\vert^q\bigr)^{p/q} =: \alpha(t)\qquad\hbox{for all $t\in \R$}.\]

Note that since $q>2$, the function $\alpha$ is $\mathcal C^2$-$\,$smooth on $\R$, with $\alpha(0)=1$ and $\alpha'(0)=\alpha''(0)=0$.

\smallskip
Let $\phi:\Omega\to\K$ be a measurable function such that $\vert \phi \vert\equiv 1$ and $Sx=\vert Sx\vert\,\phi$. Then
\[ \left\vert Sx+t\, Sy\right\vert= \Bigl\vert \vert Sx\vert+t\, \bar\phi Sy\Bigr\vert\geq \bigl\vert \vert Sx\vert +t\, {{\rm Re}(\bar\phi Sy)}\bigr\vert.\]
So, if we set $u:={{\rm Re}(\bar\phi Sy)}$ then 
\[ \bigl\Vert \vert Sx\vert +t\, u\bigr\Vert^p_p\leq \alpha(t)\qquad\hbox{for all $t\in \R$}.\] 

Now, consider the function $f:\R\to\R$ defined by 
\begin{align*} f(t):= \bigl\Vert \vert Sx\vert +t\, u\bigr\Vert^p_p-\alpha(t) =\int_\Omega \bigl\vert \vert Sx\vert +tu\bigr\vert^p d\mu-\alpha(t).
\end{align*}

Since $p>2$, the function $f$ is $\mathcal C^2$-$\,$smooth on $\R$, with 
\[ f'(t)= p\int_\Omega \bigl\vert \vert Sx\vert +tu\bigr\vert^{p-1} u d\mu-\alpha'(t)\quad{\rm and}\quad f''(t)=p(p-1)\int_\Omega \bigl\vert \vert Sx\vert +tu\bigr\vert^{p-2} u^2 d\mu-\alpha''(t).\]

Moreover, we have $f(t)\leq 0$ on $\R$ and $f(0)=\Vert Sx\Vert_p^p-1=0$ since $x\in\mathcal N(S)$; so $f$ has a maximum at $t=0$. Hence $f''(0)\leq 0$, \mbox{\it i.e.}
\[ \int_\Omega  \vert Sx\vert^{p-2} u^2 d\mu\leq 0.\]

Since $\vert Sx\vert^{p-2} u^2 \geq 0$, this means that $(Sx) u=0$. So we have shown that $(Sx)\,{\rm Re}(\bar\phi\, Sy)=0$. 
In the case where $\K=\R$, this yields that $(Sx)(Sy)\bar\phi=0$, and hence $(Sx)(Sy)=0$ since $\vert \phi\vert\equiv 1$. In the case where $\K=\C$,
starting from the inequality $\Vert Sx-i t Sy\Vert_p^p\leq \alpha(t)$ we obtain that $(Sx)\,{\rm Im}(\bar\phi\, Sy)=0$. Again, $(Sx)(Sy)\bar\phi=0$, and hence $(Sx)(Sy)=0$.
\epf

\smallskip The following  inequality (see \cite[Lemma 2.1]{Kan}) will also be needed. Note that this inequality has already been used in the proof of Proposition \ref{lp}.
\blm\label{KanIneq} Let $p>2$. For any $u,v\in \C$,
\[ \vert u+v\vert^p+\vert u-v\vert^p\geq 2\vert u\vert^p +p \,\vert u\vert^{p-2} \vert v \vert^2.\]
\elm

\subsection{The interval property} The next definition will be essential for us.

\begin{definition} Let $M,N\in\Z_+$, and let $S\in \mathcal B(E_N,E_M)$. We say that $S$ has the \emph{interval property} if the support of any norming vector $x$ for $S$ is an interval of the form $\llbracket 0,a\rrbracket$ for some $0\leq a\leq N$.
\end{definition}

\smallskip The importance of this definition lies in the following lemma. We can prove it for $p=3$ only, but it is quite plausible that it holds true for every $p>2$.

\blm \label{finitesupport} Assume that $p=3$. If $S\in \mathcal B(E_N,E_M)$ has the interval property, then $\mathcal N(S)\cap S_{E_N}$ is finite.
\elm

\smallskip To prove Lemma \ref{finitesupport}, we need to introduce another definition (where the assumption that $\K=\R$ is essential).

\begin{definition} Let $M,N\in\Z_+$, and let $S\in \mathcal B(E_N,E_M)$. Let also $x,y\in E_N$. We say that  \emph{$x$ and $y$ are sign-compatible relative to 
$S$}, or simply that \emph{$x$ and $y$ are sign-compatible}, if $\langle e_n^*, x\rangle\,\langle e_n^*,y\rangle\geq 0$ for all $n\in \llbracket 0,N\rrbracket$ and $\langle e_m^*, Sx\rangle\,\langle e_m^*,Sy\rangle\geq 0$ for all 
$m\in \llbracket 0,M\rrbracket$.
\end{definition}

In other words, the $n^{th}$ coordinates of $x$ and $y$ have the same sign for every $n\in \llbracket 0,N\rrbracket$, and the $m^{th}$ coordinates of $Sx$ and $Sy$ have the same sign for every $m\in \llbracket 0,M\rrbracket$ -- with the convention that $0$ has both signs $+$ and $-$.
Note that in spite of what the terminology might suggest, we have to be a little bit careful: the relation of sign-compatibility is obviously reflexive and symmetric, but it is not transitive.

\smallskip In the next lemma, the assumption that $p=3$ seems to be crucial. At least, it \emph{is} quite crucial for the proof we give.
\blm\label{sign} Assume that $p=3$. Let $M,N\in\Z_+$,  and let $S\in \mathcal B(E_N,E_M)$. Let also $x,y\in E_N$, and assume that $x$ and $y$ are sign-compatible with respect to $S$. Then the following facts hold true.
\be
\item[\rm (i)] There exists $c>0$ such that $x$ and $x+ty$ are sign-compatible for every real number $t\geq -c$.
\item[\rm(ii)] If $x, y\in\mathcal N(S)$, then $x+ty\in \mathcal N(S)$ for all $t\geq 0$. 
\item[\rm (iii)] Assume that $x,y\in\mathcal N(S)$ and that ${\rm supp}(y)\subseteq {\rm supp}(x)$. Then ${\rm supp}(Sy)\subseteq {\rm supp}(Sx)$ and, if $c$ is as in {\rm (i)}, $x+ty\in \mathcal N(S)\cup\{ 0\}$ for every $t\geq -c$.
\ee 
\elm
\bpf (i) It is clear that $x$ and $x+ty$ are sign-compatible for every $t\geq 0$; so we only need to show that there exists $c>0$ such that $x$ and $x+ty$ are sign-compatible for every $t\in [-c,0]$. But this is easy: just choose $c$ small enough to ensure that $\langle e_n^*, x\rangle\, \langle e_n^*, x+ty\rangle>0$ on $[-c,0]$ for all $n\in\llb 0,N\rrb$ such that $\langle e_n^*, x\rangle\neq 0$, and $\langle e_m^*, Sx\rangle\, \langle e_m^*, S(x+ty)\rangle>0$ on $[-c,0]$ for all $m\in\llb 0,M\rrb$ such that $\langle e_m^*, Sx\rangle\neq 0$.

\smallskip
(ii) Assume that $x,y\in\mathcal N(S)$ and, without loss of generality, that $\Vert S\Vert=1$. 

Let us consider the function $\varphi_{x,y}:\R\to \R$ defined by 
\[ \varphi_{x,y}(t):= \Vert x+ty\Vert^3-\Vert S(x+ty)\Vert^3.\]

Note that $\varphi_{x,y}(t)\geq 0$ on $\R$ because $\Vert S\Vert=1$, and $\varphi_{x,y}(0)=0$ because $x\in\mathcal N(S)$. Moreover, the function $\varphi_{x,y}$ is $\mathcal C^1$-$\,$smooth on $\R$. We have to show that $\varphi_{x,y}(t)=0$ for all $t\geq 0$.

\smallskip  Let $c$ be as in (i).  For any $n\in\llb0,N\rrb$ such that $\langle e_n^*,x\rangle\neq 0$ and $t\geq -c$, we may write $\vert \langle e_n^*,x+ty\rangle\vert =\omega_n \langle e_n^*,x+ty\rangle$, where $\omega_n=\pm 1$ does not depend on $t$; and it follows that $\vert \langle e_n^*,x+ty\rangle\vert^3$ is a polynomial function of $t\in [-c,\infty)$. Similarly, if $m\in\llb0,M\rrb$ is such that $\langle e_m^*, Sx\rangle\neq 0$, then $\vert \langle e_m^*,S(x+ty)\rangle\vert^3$ is a polynomial function of $t\in [-c,\infty)$. 
Writing down the definition of 
$\varphi_{x,y}(t)$, it follows that for $t\geq -c$,
\[ \varphi_{x,y} (t)= P(t)+ d \,\vert t\vert^3,\]
where $P$ is a polynomial of degree at most $3$ and \[ d:=\sum_{\langle e_n^*,x\rangle=0} \vert \langle e_n^*,y\rangle\vert^3- \sum_{\langle e_m^*,Sx\rangle=0} \vert \langle e_m^*,Sy\rangle\vert^3.\]
Let us write $P$ as $P(t)=c_0+c_1t+c_2 t^2 +c_3 t^3$, so that
\[ \varphi_{x,y}(t)=c_0+c_1t+c_2 t^2 +c_3 t^3+ d \,\vert t\vert^3\qquad\hbox{for all $t\geq -c$}.\]
We have $c_0=\varphi_{x,y}(0)=0$, and $c_1=\varphi_{x,y}'(0)=0$ because $\varphi_{x,y}$ attains its minimum at $t=0$; so 
\[ \varphi_{x,y}(t)=c_2 t^2 +c_3 t^3 + d\, \vert t\vert^3.\]

Next, we use a symmetry argument. Let $\varphi_{y,x}$ be the function defined by
\[ \varphi_{y,x}(t):=\Vert y+tx\Vert^3-\Vert S(y+tx)\Vert^3.\]
If  $t>0$ then $\varphi_{x,y}(t)=t^3 \varphi_{y,x}(1/t)$; hence, 
\[ \forall u>0\;:\; \varphi_{y,x} (u)=u^3 \varphi_{x,y} (1/u)= d+c_3+c_2 u.\]
Since $\varphi_{y,x}(0)=0=\varphi'_{y,x}(0)$ because $y\in\mathcal N(S)$, it follows that $d+c_3=0=c_2$. Thus, we obtain 
\[ \varphi_{x,y}(t)= d\, (\vert t\vert^3-t^3)\qquad\hbox{for all $t\geq -c$}.\]
In particular, $\varphi_{x,y}(t)=0$ for all $t\geq 0$, which gives (ii). 

\smallskip
(iii) With the notation of (ii), we have $\varphi_{x,y}(t) = 2 d\, \vert t\vert^3$ for all $t\in [-c,0]$. Since $\varphi_{x,y}\geq 0$, it follows that 
\[ d=\sum_{\langle e_n^*,x\rangle=0} \vert \langle e_n^*,y\rangle\vert^3- \sum_{\langle e_m^*,Sx\rangle=0} \vert \langle e_m^*,Sy\rangle\vert^3\geq 0.\]
Since ${\rm supp}(y)\subseteq {\rm supp}(x)$, the first sum in the right-hand side of the expression above is equal to $0$; so the second sum must also be equal to $0$, \mbox{\it i.e.} ${\rm supp}(Sy)\subseteq {\rm supp}(Sx)$. Altogether $d=0$; so $\varphi_{x,y}(t)=0$ for all $t\geq -c$, which means that $x+ty\in\mathcal N(S)\cup\{ 0\}$.

\epf

\smallskip We can now prove Lemma \ref{finitesupport}.

\begin{proof}[Proof of Lemma \ref{finitesupport}] Let us fix $S\in \mathcal B(E_N,E_M)$ with the interval property. We start with the following fact.
\begin{fact}\label{faux?} If $x,y\in\mathcal N(S)$ are sign-compatible, then $x$ and $y$ are proportional.
\end{fact}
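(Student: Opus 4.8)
The plan is to use the machinery of Lemma \ref{sign} to reduce the statement to a one-dimensional convexity argument. Suppose $x,y\in\mathcal N(S)$ are sign-compatible but not proportional. Since $S$ has the interval property, $\mathrm{supp}(x)=\llbracket 0,a\rrbracket$ and $\mathrm{supp}(y)=\llbracket 0,b\rrbracket$ for some $0\le a,b\le N$; without loss of generality $b\le a$, so that $\mathrm{supp}(y)\subseteq\mathrm{supp}(x)$. By Lemma \ref{sign}(iii), letting $c>0$ be as in Lemma \ref{sign}(i), we have $x+ty\in\mathcal N(S)\cup\{0\}$ for \emph{every} $t\ge -c$. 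The first step is to observe that $x+ty$ cannot vanish for any $t\ge -c$: since $\mathrm{supp}(y)\subseteq\mathrm{supp}(x)=\llbracket 0,a\rrbracket$ and the nonzero coordinates of $x$ and $y$ have matching signs, $x+ty=0$ would force $\langle e_n^*,x\rangle = -t\langle e_n^*,y\rangle$ on $\mathrm{supp}(x)$, which for $t\ge 0$ is impossible (opposite signs) and for $t\in[-c,0)$ would only be possible if $x$ and $y$ were proportional with ratio $-t>0$, contradicting sign-compatibility unless $x=y=0$. Hence $x+ty\in\mathcal N(S)$ for all $t\ge -c$, i.e. $\varphi_{x,y}(t) = \Vert x+ty\Vert^3 - \Vert S(x+ty)\Vert^3 = 0$ on the whole half-line $[-c,\infty)$.

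The second step is to exploit this by differentiating. We may assume $\Vert S\Vert=1$. The function $t\mapsto \Vert x+ty\Vert_p^3 = \bigl(\sum_n |\langle e_n^*,x\rangle + t\langle e_n^*,y\rangle|^p\bigr)^{3/p}$ with $p=3$ is a polynomial in $t$ on $[-c,\infty)$ (each summand is, by sign-compatibility and the argument of Lemma \ref{sign}(ii), a polynomial there), and likewise $t\mapsto\Vert S(x+ty)\Vert^3$ is a polynomial on $[-c,\infty)$. So $\varphi_{x,y}$ is a polynomial on $[-c,\infty)$ that vanishes identically; in particular \emph{all} its derivatives vanish. On the other hand, the identity $\Vert x+ty\Vert_3^3 = \Vert S(x+ty)\Vert_3^3$ for all real $t$ near $0$ says precisely that the two cubic forms $Q_1(x,y):=\Vert x+ty\Vert_3^3$ and $Q_2(x,y):=\Vert Sx + tSy\Vert_3^3$ agree; expanding in powers of $t$ and matching the coefficient of $t^1$ (or using $\varphi'_{x,y}\equiv 0$ together with the explicit form of the derivative), I would extract the identity
\[
\sum_{n} \mathrm{sgn}(\langle e_n^*,x\rangle)\,|\langle e_n^*,x\rangle|^{2}\,\langle e_n^*,y\rangle \;=\; \sum_{m} \mathrm{sgn}(\langle e_m^*,Sx\rangle)\,|\langle e_m^*,Sx\rangle|^{2}\,\langle e_m^*,Sy\rangle,
\]
and similarly the $t^2$-coefficient gives $\sum_n |\langle e_n^*,x\rangle|\,|\langle e_n^*,y\rangle|^2 = \sum_m |\langle e_m^*,Sx\rangle|\,|\langle e_m^*,Sy\rangle|^2$ (on $\mathrm{supp}(x)$, resp. $\mathrm{supp}(Sx)$, which by Lemma \ref{sign}(iii) contains $\mathrm{supp}(Sy)$). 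The third step is then to combine these relations with the Cauchy--Schwarz / Hölder equality case: since $\Vert Sx\Vert = \Vert x\Vert$ and $\Vert Sy\Vert=\Vert y\Vert$ (norming vectors), the quadratic-form identity together with the coordinatewise inequalities coming from $|\langle e_m^*, S(x+ty)\rangle|\le$ (contribution controlled by $\Vert S(x+ty)\Vert$) should force the ``profiles'' of $x$ and $y$ to coincide up to scalar on a common support, i.e. $y$ proportional to $x$ — contradiction.

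The main obstacle I anticipate is the very last step: turning the polynomial identities into genuine proportionality. The cleanest route is probably to argue by contradiction with a strict convexity statement, exactly mirroring the structure already set up in Lemma \ref{sign}: apply Lemma \ref{sign}(iii) again but now to the pair $(x, x+ty)$ (which are also sign-compatible by part (i)) to get a \emph{second} family of norming vectors, and iterate to produce a two-parameter family $x + sy$, $s$ ranging over an open set, all of which are norming; then the norm $\Vert\cdot\Vert_3$ restricted to the $2$-plane $\mathrm{span}(x,y)$ would have to agree with $\Vert S\cdot\Vert_3$, and since $\Vert\cdot\Vert_3$ is strictly convex while $S$ restricted to that plane is a norming (hence isometric, by the norming condition on an open cone) injection, one concludes $S$ is an isometry on $\mathrm{span}(x,y)$; but then the interval property forces both $\mathrm{supp}(x)$ and $\mathrm{supp}(y)$ to be initial intervals with $\mathrm{supp}(y)\subseteq\mathrm{supp}(x)$ and, re-running the derivative identities, the coefficient $d$ in Lemma \ref{sign}(ii)--(iii) vanishes, so that $x$ and $y$ must have identical normalized coordinate patterns. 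The delicate bookkeeping is in making ``the cubic forms on a $2$-plane coincide and the map is a coordinate-support-preserving isometry'' imply proportionality when $p=3$ specifically; this is where the exponent $3$ (giving genuinely \emph{polynomial}, not merely smooth, dependence, and an odd power so that signs enter linearly) is used in an essential way, and where I expect to need the sign-compatibility hypothesis once more to rule out cancellation.
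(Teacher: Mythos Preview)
Your setup through Lemma \ref{sign}(iii) is correct and gives $x+ty\in\mathcal N(S)$ for all $t\ge -c$, equivalently $\varphi_{x,y}\equiv 0$ on $[-c,\infty)$. (A minor point: your Step~1 argument is overcomplicated --- $x+ty=0$ simply means $x=-ty$, which is proportionality, so the non-vanishing is immediate.) But Step~3 has a genuine gap. The coefficient identities you extract are correct consequences of the polynomial identity on $[-c,\infty)$, but you do not explain how to combine them with the interval property to force proportionality, and the suggested routes do not work. The ``iteration'' you propose --- applying Lemma \ref{sign}(iii) to $(x,x+ty)$ --- produces vectors $(1+s)x+sty$, which after reparametrisation still lie in the same closed half-plane $\{\alpha x+\beta y:\alpha>0,\ \beta\ge -c\alpha\}$ you already control; no genuinely new norming vectors appear, so you never reach the full $2$-plane. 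And even if $S$ were isometric on all of $\mathrm{span}(x,y)$, that does not by itself give proportionality: you would need to examine vectors such as $x-(x_0/y_0)\,y$, whose $0$-th coordinate vanishes and which therefore violates the interval property --- but these vectors lie precisely \emph{outside} the half-plane you have access to.

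The paper's argument is of a completely different nature: a support-descent. The key idea you are missing is to take the \emph{maximal} $c>0$ for which $x$ and $x+ty$ remain sign-compatible for all $t\ge -c$ (such a maximum exists because the $0$-th coordinate prevents sign-compatibility for all $t<0$). By maximality, at $z:=x-cy\in\mathcal N(S)$ some coordinate of $z$ or of $Sz$ must vanish where the corresponding coordinate of $x$ or $Sx$ does not; a careful case analysis using sign-compatibility of $z$ with both $x$ and $y$, together with the interval property, then shows $\mathrm{supp}(z)\subsetneq\mathrm{supp}(y)$. From this one first proves that any sign-compatible linearly independent pair in $\mathcal N(S)$ must have \emph{equal} supports, and then applies that statement to the pair $(x,z)$ to contradict $\mathrm{supp}(z)\subsetneq\mathrm{supp}(x)$. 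The interval property is used combinatorially at every step; the polynomial/derivative identities play no role beyond what Lemma \ref{sign} already packages.
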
 
\begin{proof}[Proof of Fact \ref{faux?}] Towards a contradiction, assume that there exist two sign-compatible and linearly independent vectors $x, y$ in $\mathcal N(S)$. By the interval property, we may also assume that ${\rm supp}(y)\subseteq {\rm supp}(x)$.

\begin{claim}\label{c} There exists a largest $c>0$ such that $x$ and $x+ty$ are sign-compatible for all $t\geq -c$.
\end{claim}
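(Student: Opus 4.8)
The plan is to translate the claim into a statement about a finite system of affine inequalities in the single real variable $t$, and to read off the answer from the geometry of that system. First I would introduce the set $T^{\star}:=\{\,t\in\R;\ x\text{ and }x+ty\text{ are sign-compatible}\,\}$, so that the claim is equivalent to the assertion that $\{\,c>0;\ [-c,\infty)\subseteq T^{\star}\,\}$ has a largest element. By Lemma \ref{sign}~(i) this last set is non-empty, so it will be enough to prove that $T^{\star}$ is a closed half-line $[-C,\infty)$ with $0<C<\infty$: the number $C$ is then precisely the largest admissible $c$.

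Next I would spell out sign-compatibility of $x$ and $x+ty$ coordinate by coordinate. For each $n\in\llb 0,N\rrb$ it amounts to the inequality $\la{e_n^*,x}^2+t\,\la{e_n^*,x}\la{e_n^*,y}\ge 0$, and for each $m\in\llb 0,M\rrb$ to $\la{e_m^*,Sx}^2+t\,\la{e_m^*,Sx}\la{e_m^*,Sy}\ge 0$; thus $T^{\star}$ is cut out by finitely many affine conditions $a_i+b_it\ge 0$ with $a_i\ge 0$. The key observation — and this is where the hypothesis that $x$ and $y$ are \emph{sign-compatible} is used — is that every slope $b_i$ is $\ge 0$, and that $b_i>0$ forces $a_i>0$ (since $b_i>0$ makes the relevant coordinate of $x$, resp. of $Sx$, nonzero). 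Hence each individual condition defines either all of $\R$ (when its slope vanishes) or a closed half-line $[-a_i/b_i,\infty)$ with $a_i/b_i>0$, so that the finite intersection $T^{\star}$ is closed and equals either $\R$ or $[-C,\infty)$ with $C:=\min\{\,a_i/b_i;\ b_i>0\,\}>0$.

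It then remains to exclude the degenerate possibility $T^{\star}=\R$. This would mean $b_i=0$ for all the coordinate-indices, i.e. $\la{e_n^*,x}\la{e_n^*,y}=0$ for every $n\in\llb 0,N\rrb$; but the standing assumption ${\rm supp}(y)\subseteq{\rm supp}(x)$ makes each $n\in{\rm supp}(y)$ an index with $\la{e_n^*,x}\neq 0\neq\la{e_n^*,y}$, so $T^{\star}=\R$ would force $y=0$, contradicting the linear independence of $x$ and $y$. Consequently $T^{\star}=[-C,\infty)$ with $0<C<\infty$, the set of admissible $c$ is exactly $(0,C]$, and its largest element is $C$, which proves the claim. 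I do not anticipate any genuine obstacle here: the only points that need care are the observation that sign-compatibility of $x$ and $y$ forces all the slopes to be non-negative (so that $T^{\star}$ really does open towards $+\infty$), and the use of the support hypothesis to discard the trivial case $T^{\star}=\R$.
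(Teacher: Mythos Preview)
Your proof is correct and takes a more direct route than the paper's. You unpack sign-compatibility of $x$ and $x+ty$ as a finite system of affine conditions $a_i+b_it\ge 0$ with $a_i\ge 0$, observe (using the sign-compatibility of $x$ and $y$) that all slopes $b_i$ are non-negative with $b_i>0\Rightarrow a_i>0$, and then use the inclusion ${\rm supp}(y)\subseteq{\rm supp}(x)$ together with $y\neq 0$ to rule out the degenerate case $T^\star=\R$. This yields $T^\star=[-C,\infty)$ with $0<C<\infty$ and the claim follows. The paper argues differently: it invokes Lemma~\ref{sign}(iii) to deduce that $x+ty\in\mathcal N(S)$ for all $t\ge -c$ whenever $c$ is admissible, and then uses the interval property (namely that $\langle e_0^*,u\rangle\neq 0$ for every $u\in\mathcal N(S)$, while $\langle e_0^*,y\rangle\neq 0$ forces $\langle e_0^*,x+ty\rangle$ to vanish for some $t$) to bound the admissible set from above. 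Your argument is more self-contained --- it uses neither $p=3$, nor the interval property, nor the fact that $x,y$ are norming --- and it makes explicit why the supremum of admissible $c$'s is actually attained (a point the paper leaves implicit). The paper's route is shorter because it recycles machinery already in place for the surrounding argument.
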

\begin{proof}[Proof of Claim \ref{c}] By Lemma \ref{sign}, there exists at least one $c>0$ such that $x$ and $x+ty$ are sign compatible for all $t\geq -c$, and for any such $c$ we have $x+ty\in\mathcal N(S)$ for all $t\geq -c$ (we cannot have $x+ty=0$ because   $x$ and $y$ are linearly independent). Moreover, since $\langle e_0^*, u\rangle\neq 0$ for any $u\in\mathcal N(S)$ by the interval property, it cannot be that $x+t y\in \mathcal N(S)$ for all $t\in\R$. This proves our claim. 
\epf

\begin{claim}\label{??} Let $c$ be as in Claim \ref{c} and let $z:=x-c y$. Then $z\in\mathcal N(S)$ and ${\rm supp}(z)$ is strictly contained in ${\rm supp}(y)$.
\end{claim}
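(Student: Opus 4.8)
The plan is to prove the two assertions of the Claim in turn; the membership $z\in\mathcal N(S)$ is essentially a free consequence of Lemma~\ref{sign}, while the support statement is where the work lies.

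For $z\in\mathcal N(S)$: I would invoke Lemma~\ref{sign}(iii) on the sign-compatible pair $x,y\in\mathcal N(S)$, which satisfies ${\rm supp}(y)\subseteq{\rm supp}(x)$. First one checks that the computation of Lemma~\ref{sign}(ii)--(iii) goes through verbatim with $c$ the \emph{maximal} constant of Claim~\ref{c}: the representation $\varphi_{x,y}(t)=c_2t^2+c_3t^3+d\vert t\vert^3$ is valid on the whole half-line $[-c,\infty)$, since it only used that $x$ and $x+ty$ are sign-compatible there, and the symmetry step involves only $t>0$. Thus Lemma~\ref{sign}(iii) gives ${\rm supp}(Sy)\subseteq{\rm supp}(Sx)$ and $x+ty\in\mathcal N(S)\cup\{0\}$ for every $t\geq -c$. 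Taking $t=-c$ yields $z\in\mathcal N(S)\cup\{0\}$, and $z\neq 0$ because $x$ and $y$ are linearly independent; hence $z\in\mathcal N(S)$.

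For ${\rm supp}(z)\subsetneq{\rm supp}(y)$: by the interval property, ${\rm supp}(x)$, ${\rm supp}(y)$ and ${\rm supp}(z)$ are initial intervals. Since $\langle e_n^*,z\rangle=\langle e_n^*,x\rangle$ whenever $\langle e_n^*,y\rangle=0$, one has ${\rm supp}(z)\supseteq{\rm supp}(x)\setminus{\rm supp}(y)$, and as ${\rm supp}(z)\subseteq{\rm supp}(x)\cup{\rm supp}(y)$ this already shows that the only way to reach the conclusion is to be in the case ${\rm supp}(x)={\rm supp}(y)=\llb 0,b\rrb$ --- so I would have arranged, at this point of the proof of Fact~\ref{faux?} and using the interval property, that $x$ and $y$ share the same support. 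Now I would use the maximality of $c$: the finitely many affine functions $t\mapsto\langle e_n^*,x\rangle\langle e_n^*,x+ty\rangle$ $(0\le n\le b)$ and $t\mapsto\langle e_m^*,Sx\rangle\langle e_m^*,S(x+ty)\rangle$ $(m\in{\rm supp}(Sx))$ are all nonnegative on $[-c,\infty)$, and by maximality at least one of them vanishes at $t=-c$. If one of the first kind vanishes, at index $n^*$, then $\langle e_{n^*}^*,x\rangle\neq 0$ forces $\langle e_{n^*}^*,z\rangle=0$, so $n^*\notin{\rm supp}(z)$ while $n^*\in{\rm supp}(y)$; with the interval property this gives ${\rm supp}(z)=\llb 0,a\rrb$ with $a<n^*\le b$, i.e. ${\rm supp}(z)\subsetneq{\rm supp}(y)$, as wanted.

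The main obstacle is the remaining configuration, in which the only functions that become tight at $t=-c$ are of the second kind (an $S$-coordinate $m^*$ with $\langle e_{m^*}^*,Sz\rangle=0$): then no coordinate of $z$ cancels, ${\rm supp}(z)={\rm supp}(y)$, and the Claim is not reached for this particular $z$. I expect to dispose of this by an auxiliary downward induction on $\#\,{\rm supp}(Sx)$: the new pair $(z,y)$ is again linearly independent and sign-compatible (a one-line check from $\rho_n\geq c$ and $\sigma_m\geq c$, with $\rho_n=\langle e_n^*,x\rangle/\langle e_n^*,y\rangle$, $\sigma_m=\langle e_m^*,Sx\rangle/\langle e_m^*,Sy\rangle$), has ${\rm supp}(z)={\rm supp}(y)$ but ${\rm supp}(Sz)\subsetneq{\rm supp}(Sx)$, and $Sz\neq 0$ because $z\in\mathcal N(S)$ and $\Vert S\Vert=1$. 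After finitely many such replacements of $x$ by $z$, the $S$-supports can no longer shrink, so a constraint of the first kind must go tight, and the support strictly drops. Folding this inner induction cleanly into the statement of the Claim (rather than into the enclosing proof of Fact~\ref{faux?}) is the only delicate bookkeeping point I anticipate.
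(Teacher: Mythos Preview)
Your argument for $z\in\mathcal N(S)$ is fine and essentially the paper's.

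For the support assertion, there is a genuine gap and a much simpler route that you have missed. Your observation that the conclusion ${\rm supp}(z)\subsetneq{\rm supp}(y)$ forces ${\rm supp}(x)={\rm supp}(y)$ is correct, but you cannot simply ``arrange'' this at the outset: that equality is exactly Claim~\ref{equalsupport}, which in the paper is \emph{deduced from} Claim~\ref{??}, not assumed before it. So your reduction is circular as written. And your downward induction on $\#\,{\rm supp}(Sx)$, even if it terminates as you expect, only yields the conclusion for some iterate $z_K$, not for the given $z=x-cy$; you acknowledge this bookkeeping issue, but it means that Claim~\ref{??} as stated is not actually established by your argument.

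The paper dispatches the support statement in a single stroke. Assume ${\rm supp}(z)$ is \emph{not} strictly contained in ${\rm supp}(y)$; by the interval property this means ${\rm supp}(y)\subseteq{\rm supp}(z)$. One first checks that $z$ and $y$ are sign-compatible (using that both $x,z$ and $x,y$ are, together with ${\rm supp}(z)\subseteq{\rm supp}(x)$ and ${\rm supp}(Sz)\subseteq{\rm supp}(Sx)$). Now apply Lemma~\ref{sign}(i) to the pair $(z,y)$: there exists $\varepsilon>0$ such that $z$ and $z+sy$ are sign-compatible for every $s\ge-\varepsilon$; Lemma~\ref{sign}(iii) also gives ${\rm supp}(Sy)\subseteq{\rm supp}(Sz)$. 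Writing $x+ty=z+sy$ with $s=t+c$, a short verification then shows that $x$ and $x+ty$ are sign-compatible for all $t\ge -c-\varepsilon$, contradicting the maximality of $c$.

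The idea you are missing is simply to \emph{reuse Lemma~\ref{sign}(i) on the new pair $(z,y)$} rather than to dissect which individual affine constraint goes tight at $t=-c$. This produces the contradiction with maximality immediately, works uniformly without any assumption on the relative sizes of ${\rm supp}(x)$ and ${\rm supp}(y)$, and eliminates any need for the auxiliary induction on $S$-supports.
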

\begin{proof}[Proof of Claim \ref{??}]  That $z\in\mathcal N(S)$ follows from Lemma \ref{sign}. Moreover, since we have that ${\rm supp}(z)\subseteq {\rm supp}(x)$ and $x,z$ are sign-compatible, it also follows from Lemma \ref{sign} that ${\rm supp}(Sz)\subseteq {\rm supp}(Sx)$.

We first note that $z$ and $y$ are sign-compatible: indeed, since $x,y$ and $x,z$ are sign-compatible and ${\rm supp}(z)\subseteq {\rm supp}(x)$, we have $\langle e_n^*, z\rangle\, \langle e_n^*,x\rangle>0$ and $\langle e_n^*,x\rangle\, \langle e_n^*,y\rangle\geq 0$ for all $n\in{\rm supp}(z)$, so that $\langle e_n^*, z\rangle\, \langle e_n^*,y\rangle\geq 0$; and similarly, $\langle e_m^*, Sz\rangle\, \langle e_m^*,Sy\rangle\geq 0$ for all $m\in{\rm supp}(Sz)$. 

Now, assume that ${\rm supp}(z)$ is not strictly contained in ${\rm supp}(y)$. Then  ${\rm supp}(y)\subseteq {\rm supp}(z)$ by the interval property. By Lemma \ref{sign} and since $z,y$ are sign-compatible, it follows that one can find $\varepsilon >0$ such that $z$ and $z+sy$ are sign-compatible for all $s\geq -\varepsilon$, and it also follows that ${\rm supp}(Sy)\subseteq {\rm supp}(Sz)$.
 Let us show that $x$ and $x+ty$ are sign-compatible for all $t\geq -c-\varepsilon$, which will contradict the maximality of $c$. Write $t=-c+s$ with $s\geq -\varepsilon$. Then $x+ty=z+sy$, so that $z, x+ty$ are sign-compatible and  ${\rm supp}(x+ty)\subseteq {\rm supp}(z)$. So  we have $\langle e_n^*,z\rangle\, \langle e_n^*,x+ty\rangle >0$ for all $n\in {\rm supp}(x+ty)$, and also $\langle e_n^*,z\rangle \,\langle e_n^*, x\rangle \geq 0$  since $z,x$ are sign-compatible. Hence, we see that $\langle e_n^*,x\rangle\, \langle e_n^*,x+ty\rangle \geq 0$ for all $n\in {\rm supp}(x+ty)$. Similarly, using the sign-compatibility of $z, x+ty$ and the fact that ${\rm supp}(S(x+ty))\subseteq {\rm supp}(Sz)$, we get $\langle e_m^*,Sx\rangle\, \langle e_m^*,S(x+ty)\rangle \geq 0$ for all $m\in {\rm supp}(S(x+ty))$.
\epf

\begin{claim}\label{equalsupport} We have ${\rm supp}(x)={\rm supp}(y)$.
\end{claim}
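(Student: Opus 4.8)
The plan is to argue by contradiction, using the interval property twice together with the confinement of ${\rm supp}(z)$ already obtained in Claim \ref{??}. Recall that throughout the proof of Fact \ref{faux?} we are assuming ${\rm supp}(y)\subseteq{\rm supp}(x)$, so if Claim \ref{equalsupport} failed we would have ${\rm supp}(y)\subsetneq{\rm supp}(x)$, and the interval property would give ${\rm supp}(x)=\llbracket 0,a\rrbracket$ and ${\rm supp}(y)=\llbracket 0,b\rrbracket$ with $b<a$.

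First I would locate the support of the vector $z:=x-cy$ from Claim \ref{??}. For $n\in\llbracket b+1,a\rrbracket$ one has $\langle e_n^*,y\rangle=0$, hence $\langle e_n^*,z\rangle=\langle e_n^*,x\rangle\neq 0$; and for $n\notin{\rm supp}(x)$ both $x$ and $y$ --- hence $z$ --- vanish at $n$. Thus $\llbracket b+1,a\rrbracket\subseteq{\rm supp}(z)\subseteq{\rm supp}(x)=\llbracket 0,a\rrbracket$. Since $z\in\mathcal N(S)$ by Claim \ref{??} (and $z\neq 0$, the vectors $x,y$ being linearly independent), the interval property applies to $z$ as well, so ${\rm supp}(z)=\llbracket 0,d\rrbracket$ for some $d$; the two inclusions just displayed force $d=a$, i.e. ${\rm supp}(z)={\rm supp}(x)$. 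This contradicts Claim \ref{??}, according to which ${\rm supp}(z)$ is strictly contained in ${\rm supp}(y)\subsetneq{\rm supp}(x)$. Hence ${\rm supp}(x)={\rm supp}(y)$.

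I do not expect any genuine obstacle here: the argument is pure bookkeeping with the interval structure of the supports, and all the analytic work has already been done in Lemma \ref{sign} and in Claims \ref{c} and \ref{??}. The only points requiring a moment's care are that $z$ really is a nonzero norming vector, so that the interval property may legitimately be invoked for $z$ --- which is exactly the content of Claim \ref{??} together with the linear independence of $x$ and $y$ --- and that the standing hypothesis ${\rm supp}(y)\subseteq{\rm supp}(x)$ is still in force, so that the negation of Claim \ref{equalsupport} genuinely reads ${\rm supp}(y)\subsetneq{\rm supp}(x)$.
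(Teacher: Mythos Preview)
Your argument is correct and is essentially the same as the paper's. Both proofs hinge on the observation that, since $y$ vanishes on $\llbracket b+1,a\rrbracket$ while $x$ does not, the vector $z=x-cy$ has $a\in{\rm supp}(z)$; the interval property for $z$ then clashes with the strict inclusion ${\rm supp}(z)\subsetneq{\rm supp}(y)$ from Claim~\ref{??}. The only cosmetic difference is that the paper exhibits a single index $n\in{\rm supp}(y)\setminus{\rm supp}(z)$ and directly reads off the gap $n<a$ in ${\rm supp}(z)$, whereas you first pin down ${\rm supp}(z)=\llbracket 0,a\rrbracket$ and then invoke Claim~\ref{??}.
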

\begin{proof}[Proof of Claim \ref{equalsupport}] Towards a contradiction, assume that ${\rm supp}(x)=\llbracket 0, L\rrbracket$ and that ${\rm supp}(y)=\llbracket 0, L'\rrbracket$ with $L'<L$ (recall that $S$ has the interval property and that ${\rm supp}(y)\subseteq {\rm supp}(x)$). By Claim \ref{??}, one can find $n\in \llbracket 0, N\rrbracket$ such that $\langle e_n^*,x\rangle\, \langle e_n^*,y\rangle\neq 0$ and $\langle e_n^*,z\rangle=0$. Then $n\leq L'<L$, and $n\not\in{\rm supp}(z)$. However, since $L\in {\rm supp}(x)\setminus{\rm supp}(y)$ and $z=x-cy$, we have $L\in{\rm supp}(z)$. Since $z\in\mathcal N(S)$, this contradicts the interval property.
\epf

So far, we have proved the following: whenever $x,y\in\mathcal N(S)$ are linearly independent and sign-compatible, it holds that ${\rm supp}(x)={\rm supp}(y)$. Now, if we start with two linearly independent sign-compatible vectors $x,y\in\mathcal N(S)$ with ${\rm supp}(y)\subseteq {\rm supp}(x)$, and if we apply this property to $x$ and to the vector $z=x-cy$ of Claim \ref {??}, we obtain a contradiction. This shows that in fact, there cannot exist  linearly independent sign-compatible vectors $x,y\in\mathcal N(S)$. Hence, we have proved Fact \ref{faux?}.
\epf

Now we can prove Lemma \ref{finitesupport}. Towards a contradiction, assume that $\mathcal N(S)\cap S_{E_N}$ is infinite. Consider the map $\Phi:\mathcal N(S)\cap S_{E_N}\to \{ -1,0,1\}^{N+M+2}$ defined by 
\[ \Phi(u):= \bigl(\sgn(\langle e_0^*,u\rangle),\dots ,\sgn(\langle e_N^*,u\rangle), \sgn(\langle e_0^*, Su\rangle),\dots ,\sgn(\langle e_M^*, Su\rangle) \bigr),\]
where $\sgn(0)=0$. Since $\Phi$ has finite range and $\mathcal N(S)\cap S_{E_N}$ is assumed to be infinite, $\Phi$ cannot be one-to-one. So one can find $x,y\in\mathcal N(S)\cap S_{E_N}$ such that $\Phi(x)=\Phi(y)$. Then $x$ and $y$ are sign compatible; and they are also linearly independent because $\Phi(-u)\neq \Phi(u)$ for every $u\in \mathcal N(S)\cap S_{E_N}$. This contradicts Fact \ref{faux?}. 
\epf

\subsection{Special operators} The next definition introduces a somewhat strange-looking class of operators, which will nonetheless be quite useful to us.

\begin{definition} Let $N\in\Z_+$, and let $S\in\mathcal B(E_N, \ell_p)$. We say that $S$ is a \emph{special operator} if there exists $R\in\Z_+$ such that 
\[ \forall n\in \llbracket 0,N\rrbracket\;:\; (I-P_R) Se_n=\alpha_n \sum_{j\in \Lambda_n} e_j,\]
where $\alpha_0,\dots , \alpha_N$ are non-zero scalars and $\Lambda_0,\dots ,\Lambda_N$ are finite subsets of $\Z_+$ with $\Lambda_n\cap \llbracket 0,R\rrbracket=\emptyset$ which are ``pairwise intersecting but not 3$\,$-$\,$by$\,$-$\,$3 intersecting'', \mbox{\it i.e.} $\Lambda_n\cap \Lambda_{n'}\neq \emptyset$ for any $n,n'$ but 
$\Lambda_n\cap \Lambda_{n'}\cap \Lambda_{n''}=\emptyset$ for any pairwise distinct $n,n', n''$.
\end{definition}

\blm\label{specapprox} Let $A\in\mathcal B_1(\ell_p)$, and let $N_0\in\Z_+$. For any $\varepsilon >0$, one can find a special operator $S\in\mathcal B(E_{N_0+1}, \ell_p)$ such that $\Vert S\Vert =1$ and $\Vert Se_n-Ae_n\Vert <\varepsilon$ for all $n\in\llbracket 0,N_0\rrbracket$.
\elm
\bpf Choose $R\in\Z_+$ such that $\Vert P_RAe_n- Ae_n\Vert <\varepsilon/2$ for all $n\in\llbracket 0,N_0\rrbracket$. Choose also finite sets $\Lambda_0,\dots ,\Lambda_{N_0+1}\subseteq \Z_+$ with $\Lambda_n\cap \llbracket 0,R\rrbracket=\emptyset$, pairwise but not 3$\,$-$\,$by$\,$-$\,$3 intersecting.  Finally, let $\alpha, \beta >0$, and define $S=S_{\alpha,\beta}\in \mathcal B(E_{N_0+1}, \ell_p)$ as follows:

\[ \left\{ \begin{array}{ll} \displaystyle Se_n:= (1-\varepsilon/2) P_R Ae_n +\alpha \sum_{j\in \Lambda_n} e_j&\quad\hbox{for $n=0,\dots ,N_0$},\\
\displaystyle Se_{N_0+1}:=\beta \sum_{j\in \Lambda_{N_0+1}} e_j.
\end{array}
\right.
\]
Then $S$ is a special operator by definition, and if $\alpha$ is small enough we have $\Vert SP_{N_0}\Vert< 1$ and $\Vert Se_n-Ae_n\Vert <\varepsilon$ for all $n\in\llb 0,N_0\rrb$. Having fixed $\alpha$ small enough, we may then choose $\beta$ in such a way that $\Vert S\Vert=1$. (Take the largest $\beta$ such that $\Vert S_{\alpha,\beta}\Vert\leq 1$, which exists since $\Vert S_{\alpha,\beta}\Vert\to\infty$ as $\beta\to\infty$.)
\epf

\begin{remark*} In order to construct the pairwise intersecting but not 3$\,$-$\,$by$\,$-$\,$3 intersecting sets $\Lambda_0$, $\dots$, $\Lambda_{N_0+1}$, one may proceed as follows. Denote by $\mathcal F$ the family of all $2$-elements subsets of $\llb 0, N_0+1\rrb$; choose a family $(k_{\{ n,n'\}})_{\{ n,n'\}\in\mathcal F}$ of pairwise distinct integers greater than $R$; and set $\Lambda_n:=\bigl\{ k_{\{ n,n'\}};\; n'\neq n\bigr\}$ for $n=0,\dots ,N_0+1$. 
\end{remark*}

\smallskip
\blm\label{suppspec} Let $p>2$. If $S\in \mathcal B(E_N,\ell_p)$ is a special operator, then every norming vector $x$ for $S$ has full support, \mbox{\it i.e.} ${\rm supp}(x)=\llbracket 0,N\rrbracket$. In particular, $S$ has the interval property.
\elm
\bpf Let $S\in \mathcal B(E_N,\ell_p)$ be a special operator with witnesses $\Lambda_n,\alpha_n, \beta_n$, and let $x\in\mathcal N(S)$. Towards a contradiction, assume that $\langle e_n^*,x\rangle=0$ for some $n\in \llbracket 0,N\rrbracket$. Then $e_n$ and $x$ have disjoint supports. Since $p>2$ and $x\in \mathcal N(S)$, it follows that $Se_n$ and $Sx$ have disjoint supports as well, by Lemma \ref{Kansupp}.

Since $x\neq 0$ we may fix $n'\in \llbracket 0,N\rrbracket$ such that $\langle e_{n'}^*,x\rangle\neq 0$; and by assumption on the sets $\Lambda_0,\dots ,\Lambda_N$, we may choose $j\in \Lambda_n\cap \Lambda_{n'}$. Then $j\not\in\Lambda_{n''}$ for any $n''\neq n, n'$; hence
\[ \langle e_j^*, Sx\rangle=\alpha_n \langle e_n^*,x\rangle + \alpha_{n'} \langle e_{n'}^*, x\rangle.\]
However, we have $\langle e_n^*,x\rangle=0$, and also $\langle e_j^*, Sx\rangle=0$ because $j\in \Lambda_n$ (so that $j\in {\rm supp}(Se_n)$). So we get $ \langle e_{n'}^*, x\rangle=0$, which is the required contradiction.
\epf

\subsection{Modifications} The next definition describes a way of ``modifying'' a finite-dimensional operator without losing any  norming vector and with in mind the goal of creating new ones.
\bdf\label{modif} Le $N,M\in\Z_+$, and let $S\in\mathcal B(E_N,E_M)$ be such that $\Vert S\Vert= 1$ and $\sp\bigl(\mathcal N(S)\bigr)\neq E_N$. Denote by  $\mathcal N(S)^\perp\subseteq E_N^*$ the annihilator of $\mathcal N(S)$, and let $\mathbf y=(y_1^*,\dots ,y_L^*)$ be a finite sequence of non-zero vectors in $\mathcal N(S)^\perp$. Let also $\eta \in (0, (2L)^{-1/p})$ and $\delta>0$. The \emph{$(\mathbf y, \eta,\delta)\,$-$\,$modification of $S$} is the operator $S_{\mathbf y,\eta,\delta}\in \mathcal B(E_{N+1}, E_{M+2L})$ defined as follows:
\[\left\{ 
\begin{array}{lc}
\displaystyle
S_{\mathbf y,\eta,\delta} x:= Sx+\delta \sum_{l=1}^L \langle  y_l^*, x\rangle \, \bigl(e_{M+2l-1}+e_{M+2l}\bigr)& \hbox{for all $x\in E_N$},\\
\displaystyle S_{\mathbf y,\eta,\delta} e_{N+1}:= \eta \sum_{l=1}^L \bigl(e_{M+2l-1}-e_{M+2l}\bigr).
\end{array}\right.\]
\edf

\smallskip 
Thus, in matrix form,
\[S_{\mathbf y, \eta,\delta}=
\left[
\begin{array}{ccc}

\begin{array}{|c|}
\hline \\
\begin{array}{cccccccc}
& \phantom{a}& & &  && \phantom{a}&\\
&  \phantom{a}& & &  && \phantom{a}&\\
&  \phantom{a}& & &  && \phantom{a}&\\
& \phantom{a}&&{\hbox{ \fontsize{18}{18}\selectfont $S$}}& &&  \phantom{a}& \\
&\phantom{a}& & & && \phantom{a}&\\
&  \phantom{a}& & &&& \phantom{a}& \\
&  \phantom{a}& & &  && \phantom{a}&\\
\\
\end{array}\\
\hline
\end{array}

&

\begin{array}{c}
\;\; 0 \\
\;\; \vdots\\
\;\; \vdots\\
\;\; \vdots\\
\;\; \vdots\\
\;\; \vdots\\
 \;\; 0

\end{array}

\\

\begin{array}{ccc}
\\

\delta\,\langle y_1^*,e_0\rangle & \cdots& \delta\,\langle y_1^*,e_N\rangle\\
\delta\,\langle y_1^*,e_0\rangle & \cdots& \delta\,\langle y_1^*,e_N\rangle\\
\vdots &&\vdots \\
\vdots& &\vdots \\
\vdots& &\vdots \\
\delta\, \langle y_L^*,e_0\rangle & \cdots& \delta\,\langle y_L^*,e_N\rangle\\
\delta\,\langle y_L^*,e_0\rangle & \cdots& \delta\,\langle y_L^*,e_N\rangle\end{array}

&

\begin{array}{c}
\\
\;\;\eta \\
-\eta \\
\;\;\vdots\\
\;\;\vdots\\
\;\;\vdots\\
\;\;\eta\\
-\eta \\
\end{array}

\end{array}
\right]
\]

\medskip

The following lemma is obvious.

\par\smallskip
\blm\label{trivialtruc} With the notation of Definition \ref{modif}, we have $\Vert S_{\mathbf y,\eta, \delta}x\Vert\geq \Vert Sx\Vert$ for all $x\in E_N$ and $S_{\mathbf y, \eta, \delta}x=Sx$ for all $x\in \mathcal N(S)$. In particular $\Vert S_{\mathbf y, \eta,\delta}\Vert\geq \Vert S\Vert=1$, and if $\Vert S_{\mathbf y, \eta,\delta}\Vert=1$ then $\mathcal N(S)\subseteq \mathcal N(S_{\mathbf y, \eta,\delta})$.
\elm

\smallskip Note that, given $S$, $\mathbf y$ and $\eta$,  it is not at all clear that one can find $\delta >0$ such that $\Vert S_{\mathbf y, \eta,\delta}\Vert=1$. This motivates the following definition.

\bdf We will say that $p$ is \emph{good exponent} if the following holds true: for any $S\in\mathcal B(E_N,E_M)$ such that $\Vert S\Vert =1$ and $\mathcal N(S)\cap S_{E_N}$ is a finite set, for any $\mathbf y=(y_1^*,\dots ,y_L^*)\subseteq\mathcal N(S)^\perp$ and for any $\eta\in (0, (2L)^{-1/p})$, one can find $\delta>0$ such that $\Vert S_{\mathbf y, \eta,\delta}\Vert= 1$.
\edf

\smallskip The next lemma is the most technical part in the proof of Proposition \ref{keymachin}. We can prove it for $p=3$ only, but it seems hard to believe that it does not hold true for all $p$.
\blm \label{hardtruc} $p:=3$ is a good exponent. 
\elm
\bpf  Let us fix $S\in \mathcal B(E_N,E_M)$ with $\Vert S\Vert =1$ such that $\mathcal N(S)\cap S_{E_N}$ is a finite set. For notational simplicity and since $\mathbf y$ and $\eta$ will remain fixed, we write $S_\delta$ instead of $S_{\mathbf y,\eta,\delta}$. For any $x\in\mathcal N(S)$ and $\varepsilon >0$, we set
\[ \Sigma(x,\varepsilon):=\bigl\{ u\in E_N;\; \Vert u\Vert=\varepsilon\;{\rm and}\; \Vert x+u\Vert =1\bigr\}.\]
Finally, recall the definition of sign-compatibility: two vectors $x,y\in E_N$ are said to be sign-compatible with respect to $S$ if  $\langle e_n^*, x\rangle\,\langle e_n^*,y\rangle\geq 0$ for all $n\in \llbracket 0,N\rrbracket$ and $\langle e_m^*, Sx\rangle\,\langle e_m^*,Sy\rangle\geq 0$ for all 
$m\in \llbracket 0,M\rrbracket$.

\smallskip

\begin{claim}\label{machin1} One can find $\varepsilon \in (0,1]$ such that, for any $x\in\mathcal N(S)\cap S_{E_N}$ and all $u\in\Sigma(x,\varepsilon)$, the following holds true: $x+u$ and $x$ are sign-compatible with respect to $S$, and $\Vert S(x+u)\Vert <\Vert x+u\Vert$.

\end{claim}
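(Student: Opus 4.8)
The plan is to use the finiteness of $\mathcal N(S)\cap S_{E_N}$ twice over: once to obtain sign-compatibility on a whole neighbourhood of each norming vector, and once to discard the finitely many ``bad radii'' at which the norm of $S(x+\cdot\,)$ might fail to drop strictly below $1$. Since $\mathcal N(S)\cap S_{E_N}$ is finite, it is enough to produce, for each fixed $x\in\mathcal N(S)\cap S_{E_N}$, a threshold $\varepsilon_x\in(0,1]$ that works for that particular $x$, and then to take $\varepsilon:=\min_x\varepsilon_x$, the minimum over a finite non-empty set (non-empty because $S$ attains its norm on the finite-dimensional space $E_N$).

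So I would fix $x\in\mathcal N(S)\cap S_{E_N}$. Then $\Vert x\Vert=1$ and, since $x$ is norming and $\Vert S\Vert=1$, also $\Vert Sx\Vert=1$; in particular $Sx\neq 0$. For the sign-compatibility requirement, set
\[ \gamma_x:=\min\bigl\{\,\vert\langle e_n^*,x\rangle\vert\;;\;0\le n\le N,\ \langle e_n^*,x\rangle\neq 0\,\bigr\},\qquad \gamma_x':=\min\bigl\{\,\vert\langle e_m^*,Sx\rangle\vert\;;\;0\le m\le M,\ \langle e_m^*,Sx\rangle\neq 0\,\bigr\}; \]
both are strictly positive because $x\neq 0$ and $Sx\neq 0$. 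If $u\in E_N$ satisfies $\Vert u\Vert<\min(\gamma_x,\gamma_x')$, then $\vert\langle e_n^*,u\rangle\vert\le\Vert u\Vert<\gamma_x$ forces $\langle e_n^*,x+u\rangle$ to keep the sign of $\langle e_n^*,x\rangle$ whenever the latter is non-zero, while the inequality $\langle e_n^*,x\rangle\langle e_n^*,x+u\rangle\ge 0$ is trivial when $\langle e_n^*,x\rangle=0$; and $\Vert Su\Vert\le\Vert S\Vert\,\Vert u\Vert<\gamma_x'$ likewise forces $\langle e_m^*,S(x+u)\rangle$ to keep the sign of $\langle e_m^*,Sx\rangle$ when the latter is non-zero. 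Hence $x$ and $x+u$ are sign-compatible with respect to $S$ for every such $u$.

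For the strict inequality $\Vert S(x+u)\Vert<\Vert x+u\Vert$, I would argue by contradiction: if $u\in\Sigma(x,\varepsilon)$ but $\Vert S(x+u)\Vert=\Vert x+u\Vert=1$, then $x+u$ is itself a norming vector of $S$ of norm $1$, i.e. $x+u\in\mathcal N(S)\cap S_{E_N}$, and $x+u\neq x$ since $\Vert u\Vert=\varepsilon>0$. Consequently $\varepsilon=\Vert u\Vert$ belongs to the \emph{finite} set
\[ B:=\bigl\{\,\Vert v-w\Vert\;;\;v,w\in\mathcal N(S)\cap S_{E_N},\ v\neq w\,\bigr\}\subseteq(0,2]. \]
So it suffices to require in addition that $\varepsilon\notin B$, which is possible since $B$ is finite. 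Taking any $\varepsilon\in(0,1]$ with $\varepsilon<\min\{\min(\gamma_x,\gamma_x')\;;\;x\in\mathcal N(S)\cap S_{E_N}\}$ and $\varepsilon\notin B$ then settles the claim.

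The main point to keep in mind is that there is no real obstacle at this stage: the hard input --- namely that $\mathcal N(S)\cap S_{E_N}$ is finite, which is where the hypotheses $p=3$ and $\K=\R$ were spent, through Lemma~\ref{finitesupport} and Lemma~\ref{suppspec} --- has already been secured. Claim~\ref{machin1} is only its soft quantitative shadow: sign-compatibility with $x$ is an open condition valid near $x$, and the norm of $S(x+\cdot\,)$ can touch $1$ on $S_{E_N}$ only along the finite set $\mathcal N(S)\cap S_{E_N}$. The only care needed is to keep the two jobs of $\varepsilon$ apart (small enough for sign-compatibility, outside $B$ for the strict decrease) and to impose both uniformly over the finite set $\mathcal N(S)\cap S_{E_N}$.
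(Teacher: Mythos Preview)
Your argument is correct and follows essentially the same route as the paper: use finiteness of $\mathcal N(S)\cap S_{E_N}$ first to get sign-compatibility on a small ball (your explicit bounds $\gamma_x,\gamma_x'$ make precise what the paper phrases as ``taking $\varepsilon$ small enough''), then to secure the strict inequality. The only cosmetic difference is that the paper takes $\varepsilon$ below the minimum pairwise distance in $\mathcal N(S)\cap S_{E_N}$, whereas you merely require $\varepsilon\notin B$; both work, and in practice the sign-compatibility constraint already forces $\varepsilon$ to be small.
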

\begin{proof}[Proof of Claim \ref{machin1}] Since $\mathcal N(S)\cap S_{E_N}$ is a finite set, one can find $\varepsilon >0$ such that, for every $x\in \mathcal N(S)\cap S_{E_N}$, the set $\overline B(x,\varepsilon)\cap S_{E_N}$ contains no norming vector for $S$ except $x$. Then $\Vert S(x+u)\Vert <\Vert x+u\Vert$ for every $x\in \mathcal N(S)\cap S_{E_N}$ and all $u\in \Sigma (x,\varepsilon)$. Moreover, taking $\varepsilon$ small enough, we may also assume that the sign-compatibility condition is satisfied (and that $\varepsilon\leq 1$); see the beginning of the proof of Lemma \ref{sign}.
\epf

\begin{claim}\label{machin2} Let $\varepsilon$ be as in Claim \ref{machin1}. One can find $\delta_0>0$ such that, for any $0\leq \delta \leq \delta_0$, every $x\in\mathcal N(S)\cap S_{E_N}$ and all $z\in E_{N+1}$ such that $P_Nz\in \Sigma(x,\varepsilon)$, we have $\Vert S_\delta(x+z)\Vert\leq \Vert x+z\Vert$.
\end{claim}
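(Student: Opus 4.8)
The plan is to prove Claim \ref{machin2} by a compactness-and-continuity argument, exploiting the structure of the modification $S_\delta$ and the key property established in Claim \ref{machin1}. First I would make the set over which we quantify compact: the set $\Sigma(x,\varepsilon)$ is compact (it is closed and bounded in the finite-dimensional space $E_N$), and $\mathcal N(S)\cap S_{E_N}$ is finite by hypothesis, so the set
\[ K:=\bigl\{ (x,z)\in (\mathcal N(S)\cap S_{E_N})\times E_{N+1};\; P_Nz\in\Sigma(x,\varepsilon),\; \|x+z\|\le 2\bigr\} \]
is compact. (Imposing $\|x+z\|\le 2$ costs nothing: if $\|x+z\|>2$ then, since $\|S_\delta\|\le\|S\|+C\delta$ for an explicit constant $C$ depending only on $\mathbf y,\eta$, the desired inequality $\|S_\delta(x+z)\|\le\|x+z\|$ holds automatically once $\delta$ is small, uniformly; alternatively one scales.) So the genuine content is the existence of a uniform $\delta_0$ working on the compact set $K$.

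Next I would decompose $x+z$ as $x+z=(x+u)+te_{N+1}$ where $u:=P_Nz\in\Sigma(x,\varepsilon)$ and $t:=\langle e_{N+1}^*,z\rangle$. Using the definition of $S_\delta=S_{\mathbf y,\eta,\delta}$, we have
\[ S_\delta(x+z)=S(x+u)+\delta\sum_{l=1}^L\langle y_l^*,x+u\rangle(e_{M+2l-1}+e_{M+2l})+t\eta\sum_{l=1}^L(e_{M+2l-1}-e_{M+2l}).\]
Crucially, $S(x+u)$ is supported in $\llbracket 0,M\rrbracket$ while the two correction terms are supported in $\llbracket M+1,M+2L\rrbracket$; moreover for each $l$ the coordinates $M+2l-1$ and $M+2l$ of $S_\delta(x+z)$ are $\delta\langle y_l^*,x+u\rangle\pm t\eta$. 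Hence, writing $a_l:=\delta\langle y_l^*,x+u\rangle$ and $b:=t\eta$,
\[ \|S_\delta(x+z)\|^p=\|S(x+u)\|^p+\sum_{l=1}^L\bigl(|a_l+b|^p+|a_l-b|^p\bigr).\]
On the other side, since $x+z=(x+u)+te_{N+1}$ and $e_{N+1}$ has support disjoint from $E_N$,
\[ \|x+z\|^p=\|x+u\|^p+|t|^p.\]
So the inequality to prove becomes
\[ \|S(x+u)\|^p+\sum_{l=1}^L\bigl(|a_l+b|^p+|a_l-b|^p\bigr)\le \|x+u\|^p+|t|^p,\qquad b=t\eta.\]
By Claim \ref{machin1} there is a gap $\|x+u\|^p-\|S(x+u)\|^p\ge g(x,u)>0$; and since $K$ is compact and $(x,u)\mapsto \|x+u\|^p-\|S(x+u)\|^p$ is continuous and strictly positive on the compact set $\{(x,u):x\in\mathcal N(S)\cap S_{E_N},\ u\in\Sigma(x,\varepsilon)\}$, it is bounded below by some $g_0>0$. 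So it suffices to arrange
\[ \sum_{l=1}^L\bigl(|a_l+b|^p+|a_l-b|^p\bigr)\le |t|^p+g_0\]
for all admissible $(x,u,t)$ and all $0\le\delta\le\delta_0$.

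The remaining estimate is where I expect the only real work, and it is where $p=3$ should enter. One splits according to the size of $|t|$. For $|t|$ bounded below, say $|t|\ge t_0$ for a small $t_0$ to be chosen, the correction sum is $O(\delta)$ uniformly on $K$ (as $|a_l|\le\delta\max_l\|y_l^*\|\cdot\sup_K\|x+u\|$ and $|b|=|t|\eta\le 2\eta$), so it is $\le g_0$ once $\delta_0$ is small. For $|t|\le t_0$ small, one must show the sum $\sum_l(|a_l+b|^p+|a_l-b|^p)$ does not exceed $|t|^p+g_0$; here $b=t\eta$ is comparable to $|t|$, and the issue is that the terms $|a_l\pm b|^p$ could in principle exceed $|b|^p$. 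The point is that $2L\eta^p<1$ (this is exactly the hypothesis $\eta\in(0,(2L)^{-1/p})$!), so $\sum_l(|b|^p+|b|^p)=2L\eta^p|t|^p<|t|^p$ with room to spare, and the $a_l$-contributions are again $O(\delta)$; expanding $|a_l\pm b|^p-|b|^p$ and using $|a_l|\le C\delta$ one bounds the excess over $2L\eta^p|t|^p$ by $C'\delta$, uniformly, which is $\le g_0$ for $\delta_0$ small. (It is precisely in controlling $|a+b|^p+|a-b|^p$ against $2|b|^p$ plus a term linear in $|a|$ — rather than merely $\le C(|a|^p+|b|^p)$ — that the convexity/smoothness specific to $p=3$, or at least $p\ge 2$, is used, in the spirit of Lemma \ref{KanIneq}; for $p=3$ one has the exact algebraic identity $|a+b|^3+|a-b|^3=2|b|^3+6|b|a^2$ when $|b|\ge|a|$, making the bound completely explicit.) Choosing $\delta_0$ to be the minimum of the finitely many thresholds produced by these two regimes (and $t_0$ fixed depending only on $\eta,L,g_0$) finishes the proof. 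The main obstacle, then, is purely the uniformity of the elementary inequality across the compact parameter set — there is no conceptual difficulty once the support-disjointness decomposition above is in place and the hypothesis $2L\eta^p<1$ is used in the right spot.
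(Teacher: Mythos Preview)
Your setup (the support-disjointness decomposition, the gap $g_0=\kappa>0$ from compactness, the reduction to the scalar inequality $\sum_l(|a_l+b|^p+|a_l-b|^p)\le |t|^p+g_0$) is exactly the paper's. The gap is in how you dispose of large $|t|$.

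Your reduction to the compact set $K=\{\Vert x+z\Vert\le 2\}$ is not justified. The bound $\Vert S_\delta\Vert\le 1+C\delta$ only gives $\Vert S_\delta(x+z)\Vert\le (1+C\delta)\Vert x+z\Vert$, which is \emph{worse} than what you need, not better; and ``scaling'' does not work because $(x+z)/r$ is no longer of the form $x'+z'$ with $x'\in\mathcal N(S)\cap S_{E_N}$ and $P_Nz'\in\Sigma(x',\varepsilon)$. Since $\Vert x+z\Vert^p=1+|t|^p$, the regime $\Vert x+z\Vert>2$ is precisely the regime of large $|t|$, and it must be handled directly. Relatedly, your statement that for $|t|\ge t_0$ ``the correction sum is $O(\delta)$'' is false: the sum is at least $2L|b|^p=2L\eta^p|t|^p$, which is not small.

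The paper handles the whole range of $|t|$ in one stroke by the crude bound $\sum_l(|a_l\pm b|^p)\le 2L(C\delta+\eta|t|)^p$ (using $\Vert u\Vert=\varepsilon\le 1$ so $|a_l|\le C\delta$), reducing everything to showing that $f_\delta(s):=s^p-2L(C\delta+\eta s)^p\ge -\kappa$ for all $s\ge 0$. For $s\in[0,1]$ one uses uniform continuity to get $(C\delta_0+\eta s)^p\le (\eta s)^p+\kappa/(2L)$; for $s\ge 1$ one chooses $\alpha>0$ with $2L(1+\alpha)^p\eta^p\le 1$ (possible since $2L\eta^p<1$) and $\delta_0$ with $C\delta_0\le\alpha\eta$, so that $C\delta\le\alpha\eta s$ and hence $f_\delta(s)\ge s^p\bigl(1-2L(1+\alpha)^p\eta^p\bigr)\ge 0$. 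This last step is what replaces your faulty compactification, and it uses nothing about $p=3$ beyond $2L\eta^p<1$; the $p=3$ identity you mention is not needed here (it is used in the next claim).
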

\begin{proof}[Proof of Claim \ref{machin2}] Let 
\[ \kappa := \inf\,\bigl\{ \Vert x+u\Vert^p-\Vert S(x+u)\Vert^p;\; x\in \mathcal N(S)\cap S_{E_N}\,,\; u\in\Sigma(x,\varepsilon)\bigr\} \]
and note that, since $\mathcal N(S)\cap S_{E_N}$ is a finite set, $\kappa>0$ by compactness of the sets $\Sigma(x,\varepsilon)$.

\smallskip
Now, let us fix $x\in \mathcal N(S)\cap S_{E_N}$ and $z\in E_{N+1}$ such that $P_Nz\in \Sigma(x,\varepsilon)$. Write $z=u+\mu e_{N+1}$, where $u:=P_Nz$ and $\mu\in\R$. For any $\delta \geq 0$, the definition of $S_\delta=S_{\mathbf y, \eta,\delta}$ gives  
\begin{align*}
\Vert S_\delta(x+z)\Vert^p=\Vert S(x+u)\Vert^p+\sum_{l=1}^L \Bigl( \vert \delta \,\langle y_l^*, u\rangle +\eta \mu\vert^p+  \vert \delta \,\langle y_l^*, u\rangle - \eta \mu\vert^p\Bigr).
\end{align*}
(We have used the fact that $S_\delta x=Sx$.) Hence, setting $C:= \max(\Vert y_1^*\Vert, \dots , \Vert y_L^*\Vert)$, we get 
\begin{align*} \Vert S_\delta(x+z)\Vert^p&\leq \Vert S(x+u)\Vert^p+2 L\, \bigl( C\delta \Vert u\Vert +\eta \vert\mu\vert\bigr)^p\\
&\leq \Vert S(x+u)\Vert^p+2 L\, \bigl( C\delta  +\eta \vert\mu\vert\bigr)^p\qquad\hbox{because $\Vert u\Vert =\varepsilon\leq 1$.}
\end{align*}

By the definition of $\kappa$, it follows that
\[ \Vert x+ z\Vert^p -\Vert S_\delta(x+z)\Vert^p \geq \kappa +\vert \mu\vert^p -2L\, \bigl( C\delta  +\eta \vert\mu\vert\bigr)^p.\]

So, if we introduce the function $f_\delta :\R_+\to \R$ defined by 
\[ f_\delta(s):= s^p -2L \bigl( C\delta  +\eta s)^p,\]
it is enough to show that if $\delta_0>0$ is small enough, then $f_\delta \geq -\kappa$ on $\R_+$ for every $0\le \delta\leq \delta_0$. 

Let $\alpha>0$ to be determined in a few lines. We choose $\delta_0$ such that 
\[ C\delta_0\leq \alpha\eta\qquad\hbox{and}\qquad (C\delta_0+\eta s)^p\leq \frac\kappa{2L} +(\eta s)^p\quad \hbox{for all $s\in [0, 1]$.}\] 

If $0\le \delta\leq \delta_0$ then, on the one hand,
 \[ f_\delta(s)\geq s^p-\kappa -2L\eta^p s^p\geq -\kappa\qquad\hbox{for all $s\in [0,1]$},\]
 since $2L\eta^p< 1$. 
 On the other hand, if $s\geq 1$ then $ C\delta \leq \alpha\eta s$, and hence 
 \[ f_\delta(s)\geq s^p-2L\, (1+\alpha)^p\eta^p\, s^p.\]
 
 \noindent So, if we take $\alpha$ small enough to ensure that $2L(1+\alpha)^p\eta^p\leq 1$, which is possible since $2L\eta^p<1$, then $\delta_0$ has the required property. 
\epf

\begin{claim}\label{machin3} Let $\varepsilon$ be as in Claim \ref{machin1}, and let $\delta_0$ be as in Claim \ref{machin2}. If $0\leq \delta\leq \delta_0$ then, for every $x\in\mathcal N(S)\cap S_{E_N}$ and all $z\in E_{N+1}$ such that $P_Nz\in \Sigma(x,\varepsilon)$, it holds that
\[ \forall t\in [0,1]\;:\; \Vert S_\delta (x+tz)\Vert \leq \Vert x+tz\Vert. \]
\end{claim}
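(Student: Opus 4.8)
The plan is to establish the pointwise inequality by writing the ``defect function'' $g(t):=\|x+tz\|^{3}-\|S_{\delta}(x+tz)\|^{3}$ in closed form and reading off its sign; throughout one uses $p=3$.

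First I would decompose $z=u+\mu e_{N+1}$ with $u:=P_{N}z\in\Sigma(x,\varepsilon)$ and $\mu\in\R$, and set $w:=x+u$, so that $\|u\|=\varepsilon$ and $\|w\|=1$. Since $S_{\delta}$ acts on $E_{N}$ by $S_{\delta}v=Sv+\delta\sum_{l=1}^{L}\langle y_{l}^{*},v\rangle(e_{M+2l-1}+e_{M+2l})$, since $\langle y_{l}^{*},x\rangle=0$ because $y_{l}^{*}\in\mathcal N(S)^{\perp}$ and $x\in\mathcal N(S)$, since $S_{\delta}e_{N+1}=\eta\sum_{l=1}^{L}(e_{M+2l-1}-e_{M+2l})$, and since the vectors $x+tu$, $e_{N+1}$, $e_{M+1},\dots,e_{M+2L}$ have pairwise disjoint supports, a direct computation in $\ell_{3}$ gives
\[\|x+tz\|^{3}=\|x+tu\|^{3}+t^{3}|\mu|^{3},\qquad \|S_{\delta}(x+tz)\|^{3}=\|S(x+tu)\|^{3}+t^{3}\Theta,\]
where $\Theta:=\sum_{l=1}^{L}\bigl(|\delta\langle y_{l}^{*},u\rangle+\eta\mu|^{3}+|\delta\langle y_{l}^{*},u\rangle-\eta\mu|^{3}\bigr)\ge 0$. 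Hence $g(t)=\Psi(t)+t^{3}(|\mu|^{3}-\Theta)$, where $\Psi(t):=\|x+tu\|^{3}-\|S(x+tu)\|^{3}\ge 0$ (because $\|S\|=1$).

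The key step is to determine $\Psi$. By Claim \ref{machin1}, $x$ and $w=x+u$ are sign-compatible with respect to $S$, and $x\in\mathcal N(S)$ with $\|S\|=1$; so, repeating the first half of the proof of Lemma \ref{sign}(ii) — which uses only that $x$ is norming and that $x$ and $w$ are sign-compatible, \emph{not} that $w$ is norming — one obtains, for $s$ in a neighbourhood $[-c,\infty)$ of $0$,
\[\varphi_{x,w}(s):=\|x+sw\|^{3}-\|S(x+sw)\|^{3}=c_{2}s^{2}+c_{3}s^{3}+d\,|s|^{3},\]
hence $\varphi_{x,w}(s)=c_{2}s^{2}+(c_{3}+d)s^{3}$ for $s\ge 0$. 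Using $\varphi_{x,w}\ge 0$ and letting $s\to 0^{+}$ yields $c_{2}\ge 0$, while the identity $\varphi_{x,w}(s)/s^{3}\to\|w\|^{3}-\|Sw\|^{3}$ as $s\to\infty$ forces $c_{3}+d=\|w\|^{3}-\|Sw\|^{3}=\Psi(1)$. Since $x+tu=(1-t)x+tw$, substituting $s=t/(1-t)$ for $t\in[0,1)$ and using homogeneity of the norm gives $\Psi(t)=(1-t)^{3}\varphi_{x,w}\bigl(\tfrac{t}{1-t}\bigr)=c_{2}t^{2}(1-t)+\Psi(1)t^{3}$, which extends to $t=1$ by continuity. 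Plugging this into the formula for $g$, the cubic terms recombine into $g(1)$:
\[g(t)=c_{2}t^{2}(1-t)+\Psi(1)t^{3}+t^{3}(|\mu|^{3}-\Theta)=c_{2}t^{2}(1-t)+t^{3}g(1).\]
Since $c_{2}\ge 0$, $t^{2}(1-t)\ge 0$ on $[0,1]$, and $g(1)\ge 0$ by Claim \ref{machin2}, we conclude $g(t)\ge 0$ on $[0,1]$, i.e. $\|S_{\delta}(x+tz)\|\le\|x+tz\|$ for all $t\in[0,1]$.

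The only delicate point is the reuse of the computation in the proof of Lemma \ref{sign} to pin down the polynomial-plus-$|s|^{3}$ shape of $\varphi_{x,w}$: this is exactly where $p=3$ matters, so that $|\cdot|^{3}$ is the single non-polynomial term and the vanishing $c_{0}=c_{1}=0$ comes from $x$ being norming together with $\varphi_{x,w}\ge 0$; and it is where the sign-compatibility of $x$ and $w$ guaranteed by Claim \ref{machin1} is used. Everything else is routine bookkeeping with disjointly supported vectors in $\ell_{3}$.
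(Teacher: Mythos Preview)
Your proof is correct and follows essentially the same approach as the paper: both decompose $g(t)=\Psi(t)+t^{3}\cdot(\text{const})$, show $\Psi(t)=at^{2}+bt^{3}$ with $a\ge 0$, and then use Claim~\ref{machin2} at $t=1$ to conclude via $g(t)=at^{2}(1-t)+g(1)t^{3}\ge 0$. The only difference is cosmetic: the paper observes directly that $x$ and $x+tu$ are sign-compatible for all $t\in[0,1]$ by convexity (since $x$ and $x+u$ are), which makes $\Psi$ polynomial on $[0,1]$ immediately, whereas you detour through the reparametrisation $\varphi_{x,w}$ and the substitution $s=t/(1-t)$ to reach the same conclusion.
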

\begin{proof}[Proof of Claim \ref{machin3}] 
Let us fix $0\leq \delta\leq\delta_0$, $x\in \mathcal N(S)\cap S_{E_N}$ and $z\in E_{N+1}$ such that $u:= P_Nz\in \Sigma(x,\varepsilon)$.  Define $\varphi:\R\to\R$ by 
\[ \varphi(t):= \Vert x+tz\Vert^3 -\Vert S_{\delta}(x+tz)\Vert^3.\]
We are going to show that $\varphi(t)\geq 0$ for all $t\in [0,1]$.

\smallskip Since $x\in\mathcal N(S)$, we have $S_\delta x=Sx$. Looking at the definition of $S_\delta$, it follows that there exists some real number $\alpha$ such that 
\begin{align*} \forall t\in\R\;:\; \varphi(t)&= \Vert x+tu\Vert^3 -\Vert S(x+tu)\Vert^3 + \alpha \,\vert t\vert^3\\
&=: \psi(t)+ \alpha \,\vert t\vert^3.\end{align*}

Now, since $x$ and $x+u$ are sign-compatible (with respect to $S$) by the choice of $\varepsilon$, $x$ and $x+tu$ are sign-compatible for every $t\in [0,1]$ by convexity. Writing down the definition of $\psi(t)$, it follows (as in the proof of Lemma \ref{sign}) that the function $\psi$ is polynomial (of degree at most $3$) on $[0,1]$; and since $\psi(t)\geq 0$ on $\R$ with $\psi(0)=0=\psi'(0)$ (because $x$ is a norming vector for $S$ and $\Vert S\Vert=1$), it must have the form $\psi(t)= at^2 +b t^3$, with $a\geq 0$. So we see that 
\[ \forall t\in [0,1]\;:\; \varphi(t) =a t^2 +c t^3,\]
for some constants $a\geq 0$ and $c\in\R$. 

Moreover, by Claim \ref{machin2} we know that $\varphi (1)= \Vert x+z\Vert^3-\Vert S_\delta(x+z)\Vert^3\geq 0$, \mbox{\it i.e.} $a+c\geq 0$. So we get $\varphi(t) =at^2+ct^3\geq (a+c)t^3\geq 0$ for all $t\in [0,1]$, as required. 
\epf

\begin{claim}\label{machin4} Let $\varepsilon$ and $\delta_0$ be as in Claim \ref{machin2}, and let $0\leq\delta\leq\delta_0$. If $x\in\mathcal N(S)\cap S_{E_N}$, then one can find $r>0$ such that $\Vert S_\delta v\Vert \leq \Vert v\Vert$ for all $v\in E_{N+1}$ satisfying 
$\Vert P_Nv-x\Vert<r$.
\end{claim}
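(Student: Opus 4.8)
The plan is to deduce this "local" statement from Claim \ref{machin3} by a compactness argument together with a perturbation estimate, handling separately the component of $v$ in $E_N$ and its component along $e_{N+1}$. First I would fix $x\in\mathcal N(S)\cap S_{E_N}$ and $0\le\delta\le\delta_0$. The natural strategy is to use the homogeneity of the norm: if $v\in E_{N+1}$ has $P_Nv$ close to $x$, then in particular $\Vert v\Vert$ is close to $1$ (once $r$ is small), so after rescaling we may essentially assume $\Vert v\Vert=1$. Writing $v=x+w$ with $w:=v-x$ small, I would decompose $w=u'+\mu e_{N+1}$ where $u':=P_Nv-x$ satisfies $\Vert u'\Vert<r$ and $\mu\in\R$; note $\vert\mu\vert$ is then automatically bounded (by roughly $1+r$, since $\Vert v\Vert\le 1+r$). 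The point is that Claim \ref{machin3} already gives us the desired inequality $\Vert S_\delta v\Vert\le\Vert v\Vert$ whenever $v$ has exactly the form $x+tz$ with $P_Nz\in\Sigma(x,\varepsilon)$ and $t\in[0,1]$, i.e. whenever the $E_N$-part of $v-x$ has norm exactly $t\varepsilon\le\varepsilon$ \emph{and} $\Vert x+tu\Vert=1$ where $u=P_Nz$; so the remaining work is to cover a full neighbourhood of $x$ in $E_{N+1}$, not just this one-parameter family.

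The cleanest way I see to bridge the gap is as follows. Rescale: given $v$ with $P_Nv$ close to $x$, set $\widetilde v:=v/\Vert v\Vert$ if $v\ne 0$; it suffices to prove $\Vert S_\delta\widetilde v\Vert\le 1$, i.e. $\Vert S_\delta v\Vert\le\Vert v\Vert$ is equivalent to the normalized statement. Now $\widetilde v=x+\widetilde w$ with $\Vert\widetilde w\Vert$ small (controlled by $r$), and I would write $\widetilde w=u+\mu e_{N+1}$ with $u=P_N\widetilde v-x\in E_N$ small and $\mu$ small. If $u\ne 0$, put $t:=\Vert u\Vert/\varepsilon\in(0,1)$ (for $r<\varepsilon$) and $u_0:=(\varepsilon/\Vert u\Vert)\,u$, so $\Vert u_0\Vert=\varepsilon$; the trouble is that $u_0$ need not lie in $\Sigma(x,\varepsilon)$ because $\Vert x+u_0\Vert$ need not equal $1$. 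To fix this I would instead directly run the argument of Claims \ref{machin2}--\ref{machin3} in a neighbourhood rather than on $\Sigma(x,\varepsilon)$: by the continuity (indeed polynomial structure on sign-compatible cones) of $t\mapsto\Vert x+tu\Vert^3-\Vert S(x+tu)\Vert^3$, and since this quantity is $\ge\kappa'>0$ uniformly for $u$ in the compact sphere $\{\Vert u\Vert=\varepsilon\}$ near which sign-compatibility holds (shrinking $\varepsilon$), the same $\mathcal C^1$-smoothness argument shows $\psi(t)=\Vert x+tu\Vert^3-\Vert S(x+tu)\Vert^3$ has the form $a t^2+bt^3$ with $a\ge 0$ for $t$ in a fixed interval, and the added term from $S_\delta$ contributes $+\alpha\vert t\vert^3$ with $\alpha$ controlled by $\delta_0$; then $\varphi(1)\ge 0$ (the analogue of Claim \ref{machin2}, valid for all $v$ with $\Vert v\Vert=1$ and $P_Nv$ within $r$ of $x$, again by compactness and the choice of $\delta_0$) forces $\varphi\ge 0$ on $[0,1]$ exactly as in Claim \ref{machin3}, and in particular at the relevant $t$. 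The case $u=0$, i.e. $\widetilde v=x+\mu e_{N+1}$, is the easy sub-case and follows from $\Vert S_\delta(x+\mu e_{N+1})\Vert^3=\Vert Sx\Vert^3+2L\eta^p\vert\mu\vert^p=\Vert x\Vert^3-1+\dots$ wait, more simply $=1+2L\eta^p\vert\mu\vert^p$ versus $\Vert x+\mu e_{N+1}\Vert^3=(1+\vert\mu\vert^p)^{3/p}\ge 1+\vert\mu\vert^p>1+2L\eta^p\vert\mu\vert^p$ since $2L\eta^p<1$.

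The main obstacle, and the point where I expect to spend real effort, is making the uniformity honest: Claims \ref{machin1}--\ref{machin3} are stated for $u\in\Sigma(x,\varepsilon)$, the sphere of radius $\varepsilon$ inside the unit sphere $S_{E_N}$ centered (in the appropriate sense) at $x$, whereas Claim \ref{machin4} asks about an honest ball $\{\Vert P_Nv-x\Vert<r\}$ in $E_{N+1}$ with no constraint $\Vert x+P_Nv\Vert=1$. So I would first reprove a neighbourhood version of Claim \ref{machin2} — namely there is $r_0>0$ and, after possibly shrinking $\delta_0$, for every $x\in\mathcal N(S)\cap S_{E_N}$ and every $v\in E_{N+1}$ with $\Vert v\Vert=1$ and $\Vert P_Nv-x\Vert\le r_0$ one has $\Vert S_\delta v\Vert\le 1$ — using compactness of $\mathcal N(S)\cap S_{E_N}$ and of the relevant spheres, and the strict inequality $\Vert Sv'\Vert<\Vert v'\Vert$ for $v'$ near $x$ on $S_{E_N}\setminus\{x\}$ (which holds since $x$ is the only norming vector nearby, by the choice of $\varepsilon$ in Claim \ref{machin1}). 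With that in hand, the scaling $v\mapsto v/\Vert v\Vert$ reduces the general $v$ with $\Vert P_Nv-x\Vert<r$ (for $r$ small enough in terms of $r_0$) to the normalized case, and Claim \ref{machin3}'s polynomial argument along the segment from $x$ finishes it. Once Claim \ref{machin4} is proved, a final covering of the compact set $\mathcal N(S)\cap S_{E_N}$ by finitely many such balls, combined with the region $\Vert P_Nv\Vert$ bounded away from $\mathcal N(S)\cap S_{E_N}$ where $\Vert Sv\Vert\le(1-c)\Vert v\Vert$ and hence $\Vert S_\delta v\Vert\le\Vert v\Vert$ for $\delta$ small (by the explicit form of $S_\delta$), yields $\Vert S_\delta\Vert\le 1$; together with $\Vert S_\delta\Vert\ge\Vert S\Vert=1$ from Lemma \ref{trivialtruc} this gives $\Vert S_{\mathbf y,\eta,\delta}\Vert=1$, proving that $p=3$ is a good exponent.
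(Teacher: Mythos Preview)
Your proposal identifies the right obstruction --- namely that the naive rescaling $u_0:=(\varepsilon/\Vert u\Vert)u$ need not land in $\Sigma(x,\varepsilon)$ --- but the workaround you sketch has a genuine gap. You propose to ``reprove a neighbourhood version of Claim~\ref{machin2}'' by compactness, using that $\Vert Sv'\Vert<\Vert v'\Vert$ for $v'$ near $x$ on $S_{E_N}\setminus\{x\}$. The trouble is that this strict inequality is \emph{not uniform}: as $v'\to x$ the gap $\Vert v'\Vert^3-\Vert Sv'\Vert^3$ tends to $0$, so compactness gives you nothing to absorb the $\delta$-dependent extra terms in $\Vert S_\delta v\Vert^3$. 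Indeed, for $v=x+sw\in E_N$ with $\langle y_1^*,w\rangle\neq 0$, the added term $2\delta^3\sum_l\vert\langle y_l^*,w\rangle\vert^3 s^3$ must be controlled by $\Vert x+sw\Vert^3-\Vert S(x+sw)\Vert^3$, which is only $O(s^2)$ at best; this is exactly why the polynomial interpolation of Claim~\ref{machin3} (not raw compactness) was needed in the first place. Your plan thus becomes circular: the ``neighbourhood Claim~\ref{machin2}'' you want is essentially Claim~\ref{machin4} itself. Also, ``possibly shrinking $\delta_0$'' is not permitted here, since $\delta_0$ is fixed by Claim~\ref{machin2} in the statement.

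The paper avoids all of this by a short two-dimensional geometric trick that you are missing. Write $v=u+\mu e_{N+1}$ with $u:=P_Nv$. If $u\in\R x$ then $\langle y_l^*,u\rangle=0$ and a direct computation gives $\Vert S_\delta v\Vert\le\Vert v\Vert$. Otherwise, work in the $2$-plane $F:=\sp(x,u)\subseteq E_N$. The set $\Sigma(x,\varepsilon)\cap F=S_F(x,\varepsilon)\cap S_F$ is nonempty, and for $r$ small the half-line $\R_+u$ meets the half-open segment $(x,w]$ for some $w\in\Sigma(x,\varepsilon)\cap F$; that is, $\alpha u=x+t\widetilde u$ with $\alpha>0$, $t\in(0,1]$ and $\widetilde u\in\Sigma(x,\varepsilon)$. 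Then $v=\frac1\alpha\bigl(x+t\,(\widetilde u+\tfrac{\alpha\mu}{t}e_{N+1})\bigr)$ is \emph{exactly} of the form $\lambda(x+tz)$ with $P_Nz\in\Sigma(x,\varepsilon)$, and Claim~\ref{machin3} applies directly by homogeneity. No re-proving of Claims~\ref{machin2}--\ref{machin3} is needed.
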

\begin{proof}[Proof of Claim \ref{machin4}] Let us fix $x\in\mathcal N(S)\cap S_{E_N}$. Let also $r>0$, and let $v\in E_{N+1}$ be such that 
$\Vert P_Nv-x\Vert<r$. Write $v= u+\mu e_{N+1}$ with $u:=P_Nv$ and $\mu\in\R$. 

\smallskip If $u\in\R x$, then $S_\delta u=Su$ and hence $\Vert S_\delta v\Vert^p= \Vert Su\Vert^p+ 2L \eta^p \,\vert\mu\vert^p\leq \Vert u\Vert^p+\vert \mu\vert^p=\Vert v\Vert^p$ because $2L\eta^p\leq 1$. So we assume that $u\not\in \R x$. By Claim \ref{machin3}, it is enough to show that one can find $\lambda\in\R$, $z\in P_N^{-1}\bigl(\Sigma(x,\varepsilon)\bigr)$ and $t\in [0,1]$ such that $v=\lambda (x+tz)$. 

\smallskip Consider the $2$-dimensional space $F:=\sp(x, u)\subseteq E_N$, its unit sphere $S_F$ and the sphere $S_F(x,\varepsilon)=\{w\in F\,;\,\Vert w-x\Vert =\varepsilon\}\subseteq F$. Since $x\in S_F$ and $\Vert u-x\Vert <r$, it is geometrically clear that if $r$ is small enough, then one can find $w\in S_F(x,\varepsilon)\cap S_F$  such that the half-line $\R_+ u$ intersects the half-open segment $(x,w]$. 
This means that one can find $\alpha >0$, $\widetilde u\in \Sigma(x,\varepsilon)$ and $t\in (0,1]$ such that $\alpha u= x+t\widetilde u$.  Since $v=u+\mu e_{N+1}$, it follows that one can find $\lambda\in\R$ and $z\in P_N^{-1}(\Sigma (x,\varepsilon))$ such that $v =\lambda (x+t z)$; explicitly, $v= (1/\alpha) \bigl( x+ t\, (\widetilde u+ (\alpha\mu/t) e_{N+1}\bigr)\bigr)$.
\epf

We can now prove Lemma \ref{hardtruc}. Let $\delta_0$ be as in Claim \ref{machin2}. By Claim \ref{machin4}, one can find an open set $V\subseteq E_{N}$ containing $\mathcal N(S)\cap S_{E_N}$ such that 
\[ \forall \delta\in [0,\delta_0]\;\; \forall v\in P_N^{-1}(V)\;:\; \Vert S_\delta v\Vert \leq \Vert v\Vert.\]
Note that, by homogeneity, the inequality $\Vert S_\delta v\Vert \leq \Vert v\Vert$ holds in fact for every   $v\in P_N^{-1}\bigl( \widehat V\bigr)$ where $\widehat V:=\bigcup_{\lambda\in\R} \lambda V$.

Let $K:=S_{E_{N}}\setminus \widehat V$ and $\widehat K:=\bigcup_{\lambda\in\R} \lambda K$. 
Since $V\supseteq \mathcal N(S)\cap S_{E_N}$, we have $\Vert Su\Vert <1$ for all $u\in K$. Since $K$ is compact, one can find $c <1$ such that $\Vert S u\Vert\leq c$ for all $u\in K$. Then $\Vert S  u\Vert \leq c\,\Vert u\Vert$ for all $u\in \widehat K$ by homogeneity. We claim that if $\delta$ is small enough, then
\[ \forall v\in P_N^{-1}(\widehat K)\;:\; \Vert S_\delta v\Vert \leq \Vert v\Vert.\]
Indeed, let $v\in P_{N}^{-1}(\widehat K)$ and write $v=u+\mu e_{N+1}$ where $u:= P_Nv\in \widehat K$. Then 
\begin{align*} \Vert S_\delta(v)\Vert &\leq \Vert Su+S_\delta(\mu e_{N+1})\Vert +\Vert (I-P_M) S_\delta u\Vert\\
&\leq \Vert Su+S_\delta(\mu e_{N+1})\Vert +C\delta \Vert u\Vert,
\end{align*}
where $C:= (2L)^{1/p} \max(\Vert y_1^*\Vert,\dots ,\Vert y_L^*\Vert)$. Moreover,
\begin{align*}
\Vert Su+S_\delta(\mu e_{N+1})\Vert^p&=\Vert Su\Vert^p+2L\eta^p \vert\mu\vert^p\\
&\leq \theta^p (\Vert u\Vert^p +\vert\mu\vert^p)=\theta^p \Vert v\Vert^p,
\end{align*}
where $\theta:=\max(c, (2L)^{1/p}\eta)<1$. So, if we choose $\delta$ such that $\theta +C\delta\leq 1$, we get that $\Vert S_\delta v\Vert\leq \Vert v\Vert$ for all $v\in P_N^{-1}(\widehat K)$.

Altogether, we have shown that if $\delta>0$ is small enough, then $\Vert S_\delta v\Vert\leq \Vert v\Vert$ for all $v\in P_N^{-1}(\widehat V\cup\widehat K)$. Since $\widehat K\cup \widehat V$ contains $B_{E_N}$, this implies that $\Vert S_\delta\Vert\leq 1$, which concludes the proof.
\epf

\smallskip
\blm\label{dim+2} Let $p$ be a good exponent. Let $S\in\mathcal B(E_N,E_M)$ and $\mathbf y=(y_1^*,\dots ,y_L^*)$ be as in Definition \ref{modif}, and assume that $\mathcal N(S)\cap S_{E_N}$ is a finite set. Let also $\eta\in (0, (2L)^{-1/p})$. \be
\item[\rm (i)] There exists a largest $\delta>0$ such that $\Vert S_{\mathbf y, \eta,\delta}\Vert= 1$.
\item[\rm (ii)] 
If $\delta$ is as in \emph{(i)}, then $\dim\,\sp\bigl( \mathcal N(S_{\mathbf y,\eta,\delta})\bigr)\geq 2+ \dim\sp\bigl(\mathcal N(S)\bigr)$.
\ee
\elm
\bpf (i) Since $p$ is a good exponent, there exists at least one $\delta >0$ such that $\Vert S_{\mathbf y, \eta,\delta}\Vert= 1$. Moreover, since the linear functionals $y_l^*$ are non-zero, it is clear that $\Vert S_{\mathbf y, \eta,\delta}\Vert\to\infty$ as $\delta\to\infty$. This proves (i).

\smallskip
(ii) This will follow from the next three claims, which will also be used in the proof of Lemma \ref{induction} below. Let $\delta$ be as in (i) and, for notational simplicity, let $\widetilde S:=S_{\mathbf y, \eta,\delta}$. 

\begin{claim}\label{newnorm}  $ \mathcal N(S)$ is strictly contained in $\mathcal N(\widetilde S)$.
\end{claim}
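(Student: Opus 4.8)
The plan is to prove that $\mathcal N(S)\subsetneq \mathcal N(\widetilde S)$ where $\widetilde S=S_{\mathbf y,\eta,\delta}$ with $\delta$ chosen maximal subject to $\Vert\widetilde S\Vert=1$. The inclusion $\mathcal N(S)\subseteq\mathcal N(\widetilde S)$ is already recorded in Lemma \ref{trivialtruc}: since $\Vert\widetilde S\Vert=1$ and $\widetilde S x=Sx$ for $x\in\mathcal N(S)$, every norming vector for $S$ is still norming for $\widetilde S$. So the real content is strictness, i.e. the existence of at least one norming vector for $\widetilde S$ that does not lie in $\mathcal N(S)$.

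First I would argue that $\widetilde S$ must attain its norm somewhere (it is an operator between finite-dimensional spaces, so $\mathcal N(\widetilde S)\cap S_{E_{N+1}}\neq\emptyset$ by compactness). The key point is then to rule out the possibility that $\mathcal N(\widetilde S)\cap S_{E_{N+1}}\subseteq \mathcal N(S)\cap S_{E_N}$. Suppose for contradiction that every unit norming vector $v$ for $\widetilde S$ actually lies in $E_N$ and is a norming vector for $S$. Using the maximality of $\delta$, I would increase $\delta$ slightly to $\delta'=\delta+\epsilon$ and show that $\Vert S_{\mathbf y,\eta,\delta'}\Vert>1$, so there is a unit vector $v_{\delta'}\in E_{N+1}$ with $\Vert S_{\mathbf y,\eta,\delta'}v_{\delta'}\Vert>1$; letting $\epsilon\to 0$ and passing to a convergent subsequence, the limit $v$ is a unit norming vector for $\widetilde S$, hence (by assumption) $v\in\mathcal N(S)\cap S_{E_N}$. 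But for $x\in\mathcal N(S)$ we have $S_{\mathbf y,\eta,\delta'}x=Sx$ \emph{independently of }$\delta'$ (the modification only adds $\delta'\sum_l\langle y_l^*,x\rangle(\cdots)$, which vanishes since $y_l^*\in\mathcal N(S)^\perp$), so $\Vert S_{\mathbf y,\eta,\delta'}x\Vert=\Vert Sx\Vert=1$; a continuity/openness argument around the finite set $\mathcal N(S)\cap S_{E_N}$, together with the already-established fact (Claim \ref{machin4} and its consequences, or directly an argument in the spirit of Lemma \ref{hardtruc}) that $\Vert S_{\mathbf y,\eta,\delta'}v\Vert\le\Vert v\Vert$ for $v$ near $P_N^{-1}(\mathcal N(S)\cap S_{E_N})$ when $\delta'$ is small, contradicts $\Vert S_{\mathbf y,\eta,\delta'}v_{\delta'}\Vert>1$ for $v_{\delta'}$ close to $v$. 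Hence some unit norming vector of $\widetilde S$ is not in $\mathcal N(S)$.

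An alternative and perhaps cleaner route, which I would actually prefer, is to observe directly that $e_{N+1}\in E_{N+1}$ is a candidate: one has $\widetilde S e_{N+1}=\eta\sum_{l=1}^L(e_{M+2l-1}-e_{M+2l})$, so $\Vert \widetilde S e_{N+1}\Vert=\eta(2L)^{1/p}<1=\Vert e_{N+1}\Vert$ by the constraint $\eta\in(0,(2L)^{-1/p})$, so $e_{N+1}$ is \emph{not} norming. But more to the point, for the maximal $\delta$ one should exhibit a norming vector of the form $x_0+s\,e_{N+1}$ with $x_0\in\mathcal N(S)$ and $s\neq 0$: this is exactly what the maximality forces, because if no norming vector had a nonzero $e_{N+1}$-component then, as in the previous paragraph, $\widetilde S$ would behave like $S$ on a neighbourhood of its norming set and $\delta$ could be increased. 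Since $\mathcal N(S)\subseteq E_N$, any such vector with $s\neq 0$ is manifestly not in $\mathcal N(S)$, giving strict inclusion.

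The main obstacle I anticipate is the continuity/perturbation argument linking maximality of $\delta$ to the structure of $\mathcal N(\widetilde S)$: one must be careful that as $\delta$ is perturbed, the norming vectors can move, and one needs to control them uniformly near the finite set $\mathcal N(S)\cap S_{E_N}$. This is precisely where the hypothesis that $\mathcal N(S)\cap S_{E_N}$ is finite (which in the application comes from the interval property via Lemma \ref{finitesupport}) is used, and where the estimates built in the proof of Lemma \ref{hardtruc} (Claims \ref{machin1}--\ref{machin4}) are reused: they show that for small $\delta$ the modified operator is a contraction on a full neighbourhood of $P_N^{-1}(\mathcal N(S)\cap S_{E_N})$, so any ``new'' near-norming behaviour at the maximal $\delta$ must occur away from $\mathcal N(S)$, forcing a genuinely new norming vector.
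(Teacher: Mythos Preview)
Your overall strategy --- assume $\mathcal N(\widetilde S)=\mathcal N(S)$ and derive a contradiction with the maximality of $\delta$ --- is the right one, and the inclusion $\mathcal N(S)\subseteq\mathcal N(\widetilde S)$ via Lemma~\ref{trivialtruc} is fine. But there is a genuine gap in the execution of your first approach. You correctly obtain a sequence $v_{\delta'}\to v\in\mathcal N(S)\cap S_{E_N}$ with $\Vert S_{\delta'}v_{\delta'}\Vert>1$, and correctly note that $\Vert S_{\delta'}v\Vert=1$ independently of $\delta'$. The missing step is to conclude $\Vert S_{\delta'}w\Vert\le\Vert w\Vert$ for all $w$ close to $v$. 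You appeal to Claim~\ref{machin4}, but that claim is only proved for $\delta'\le\delta_0$, and here $\delta'>\delta\ge\delta_0$. Saying ``an argument in the spirit of Lemma~\ref{hardtruc}'' does not fix this: redoing Claims~\ref{machin1}--\ref{machin4} with $\widetilde S$ in place of $S$ produces a \emph{second} modification $\widetilde S_{\mathbf y,\eta,\varepsilon}\in\mathcal B(E_{N+2},E_{M+4L})$ with norm~$1$, not a statement about $S_{\delta'}\in\mathcal B(E_{N+1},E_{M+2L})$.

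This is exactly where the paper's proof does real work. The paper \emph{does} apply the good-exponent property to $\widetilde S$ (using the assumed finiteness of $\mathcal N(\widetilde S)\cap S_{E_{N+1}}=\mathcal N(S)\cap S_{E_N}$), obtaining $T:=\widetilde S_{\mathbf y,\eta,\varepsilon\delta}$ with $\Vert T\Vert=1$. The key new idea is an algebraic trick: expanding $\Vert T(x+\lambda e_{N+1}+\mu e_{N+2})\Vert^p\le\Vert x\Vert^p+|\lambda|^p+|\mu|^p$ and specialising $\mu=\varepsilon\lambda$ collapses the double modification back to a single one, yielding $\Vert S_{\mathbf y,\eta,\kappa\delta}\Vert\le1$ with $\kappa=(1+\varepsilon^p)^{1/p}>1$, contradicting maximality of~$\delta$. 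Your sketch does not contain this substitution, and without it the loop between ``modify again'' and ``contradict maximality of the original $\delta$'' does not close.

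Your alternative route is on the wrong track: the new norming vector is \emph{not} of the form $x_0+s\,e_{N+1}$ with $x_0\in\mathcal N(S)$. In fact the paper's Claim~\ref{noNs} (proved just after this one) shows that any $z\in\mathcal N(\widetilde S)\setminus\mathcal N(S)$ has $P_Nz\notin\sp\bigl(\mathcal N(S)\bigr)$, so its $E_N$--component is necessarily outside $\mathcal N(S)$.
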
 
\begin{proof}[Proof of Claim \ref{newnorm}] By Lemma \ref{trivialtruc}, we know that $\mathcal N(\widetilde S)\supseteq \mathcal N(S)$. Towards a contradiction, assume that $\mathcal N(\widetilde S)= \mathcal N(S)$. Then, extending the linear functionals $y_1^*,\dots ,y_L^*$ to $E_{N+1}$ by setting $\langle y_l^*, e_{N+1}\rangle:= 0$, we have $y_l^*\in\mathcal N(\widetilde S)^\perp$ for $l=1,\dots ,L$; so we may consider $(\mathbf y, \eta, \delta')\,$-$\,$modifications of $\widetilde S\in\mathcal B(E_{N+1}, E_{M+2L})$ for any $\delta'>0$. 

Since $p$ is a good exponent and $\mathcal N(\widetilde S)\cap S_{E_{N+1}}= \mathcal N(S)\cap S_{E_N}$ is a finite set, one can find $\varepsilon >0$ such that the operator $T:= \widetilde S_{\mathbf y, \eta, \varepsilon \delta}\in \mathcal B(E_{N+2}, E_{M+4L})$ satisfies $\Vert T\Vert =1$. Thus, for any $(x,\lambda, \mu)\in E_{N}\times\R\times \R$ we have \[ \Vert T(x+\lambda e_{N+1}+\mu e_{N+2})\Vert^p\leq \Vert x\Vert^p+\vert\lambda \vert^p+\vert\mu\vert^p.\]

By the definition of $T$ and since $\langle y_l^*, e_{N+1}\rangle=0$ for $l=1,\dots ,L$, this reads as follows:
\begin{align*}  \Vert \widetilde S (x+\lambda e_{N+1})\Vert^p+ \sum_{l=1}^L \Bigl( \left\vert \varepsilon\delta\, \langle y_l^*,x\rangle +\eta \mu\right\vert^p+ \left\vert \varepsilon\delta\, \langle y_l^*,x\rangle -\eta \mu\right\vert^p\Bigr)\leq \Vert x\Vert^p+\vert\lambda \vert^p+\vert\mu\vert^p.
\end{align*}

Taking $\mu:= \varepsilon\,\lambda$, we obtain that for all $(x,\lambda)\in E_{N}\times \R$,
\[  \Vert \widetilde S (x+\lambda e_{N+1})\Vert^p+ \varepsilon^p\sum_{l=1}^L \Bigl( \bigl\vert \delta\, \langle y_l^*,x\rangle +\eta\,  \lambda\bigr\vert^p+ \bigl\vert \delta\, \langle y_l^*,x\rangle -\eta \,\lambda\bigr\vert^p\Bigr) \leq \Vert x\Vert^p+(1+\varepsilon^p) \vert\lambda\vert^p.
\]

However, the definition of $\widetilde S=S_{\mathbf y, \eta,\delta}$ gives 
\[ \Vert \widetilde S(x+\lambda e_{N+1})\Vert^p=\Vert S x\Vert^p + \sum_{l=1}^L \Bigl( \bigl\vert \delta\, \langle y_l^*, x\rangle +\eta\,  \lambda\bigr\vert^p+ \bigl\vert \delta\, \langle y_l^*, x\rangle -\eta \,\lambda\bigr\vert^p\Bigr).\]

So we see that for any $(x,\lambda)\in E_N\times\R$,
\[ \Vert S x\Vert^p + (1+\varepsilon^p) \sum_{l=1}^L \Bigl( \bigl\vert \delta\, \langle y_l^*,x\rangle +\eta\,  \lambda\bigr\vert^p+ \bigl\vert \delta\, \langle y_l^*,x\rangle -\eta \,\lambda\bigr\vert^p\Bigr)
\leq \Vert x\Vert^p +(1+\varepsilon^p) \vert\lambda \vert^p
\]

Setting $\kappa:=(1+\varepsilon^p)^{1/p}$ and replacing $\lambda$ by $\lambda/\kappa$, this may be re-written as follows:
\[ \forall (x,\lambda)\in E_N\times\R\;:\; \Vert S x\Vert^p + \sum_{l=1}^L \Bigl( \bigl\vert \kappa \delta\, \langle y_l^*,x\rangle +\eta\,  \lambda\bigr\vert^p+ \bigl\vert \kappa \delta\, \langle y_l^*,x\rangle -\eta \,\lambda\bigr\vert^p\Bigr)
\leq \Vert x\Vert^p +  \vert\lambda \vert^p.
\]

This means exactly that the operator $S_{\mathbf y, \eta , \kappa\delta}$ satisfies $\Vert S_{\mathbf y, \eta , \kappa\delta}\Vert \leq 1$, which contradicts the maximality of $\delta$ since $\kappa>1$.
\epf

\begin{claim}\label{noNs} Let $z\in E_{N+1}$, and assume that $z\in\mathcal N(\widetilde S)\setminus \mathcal N(S)$. Then $\langle e_{N+1}^*,z\rangle \neq 0$ and $P_Nz\not\in\sp\bigl(\mathcal N(S)\bigr)$.
\end{claim}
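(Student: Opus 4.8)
The plan is to prove the two assertions separately, both by contradiction, exploiting the block structure of $\widetilde S=S_{\mathbf y,\eta,\delta}$ and the maximality of $\delta$ from Lemma \ref{dim+2}(i). First suppose $z\in\mathcal N(\widetilde S)\setminus\mathcal N(S)$ but $\langle e_{N+1}^*,z\rangle=0$, i.e.\ $z=P_Nz\in E_N$. Then by the very definition of $\widetilde S$ on $E_N$ we have
\[
\Vert \widetilde S z\Vert^p=\Vert Sz\Vert^p+\sum_{l=1}^L\Bigl(\vert\delta\langle y_l^*,z\rangle\vert^p+\vert\delta\langle y_l^*,z\rangle\vert^p\Bigr)=\Vert Sz\Vert^p+2\delta^p\sum_{l=1}^L\vert\langle y_l^*,z\rangle\vert^p.
\]
Since $\Vert\widetilde S\Vert=1$ and $z\in\mathcal N(\widetilde S)$, the left-hand side equals $\Vert z\Vert^p$; since $\Vert S\Vert=1$ we have $\Vert Sz\Vert^p\le\Vert z\Vert^p$. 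Hence $\sum_l\vert\langle y_l^*,z\rangle\vert^p\le 0$, so $\langle y_l^*,z\rangle=0$ for all $l$; but then $\widetilde Sz=Sz$, so $\Vert Sz\Vert=\Vert z\Vert$ and $z\in\mathcal N(S)$, contradicting $z\notin\mathcal N(S)$. This gives $\langle e_{N+1}^*,z\rangle\neq 0$.

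For the second assertion, suppose towards a contradiction that $u:=P_Nz\in\sp\bigl(\mathcal N(S)\bigr)$, and write $z=u+\mu e_{N+1}$ with $\mu\neq 0$ by the first part. Since $\mathcal N(S)^\perp$ annihilates $\sp(\mathcal N(S))$, we get $\langle y_l^*,u\rangle=0$ for all $l$. Plugging into the defining formula for $\widetilde S$,
\[
\Vert\widetilde S z\Vert^p=\Vert Su\Vert^p+\sum_{l=1}^L\Bigl(\vert\eta\mu\vert^p+\vert{-\eta\mu}\vert^p\Bigr)=\Vert Su\Vert^p+2L\eta^p\vert\mu\vert^p.
\]
Again $z\in\mathcal N(\widetilde S)$ with $\Vert\widetilde S\Vert=1$ forces this to equal $\Vert z\Vert^p=\Vert u\Vert^p+\vert\mu\vert^p$, so $\Vert Su\Vert^p=\Vert u\Vert^p-(1-2L\eta^p)\vert\mu\vert^p$. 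Since $\eta\in(0,(2L)^{-1/p})$ we have $1-2L\eta^p>0$, so $\Vert Su\Vert^p<\Vert u\Vert^p$ as $\mu\neq 0$; in particular $u\neq 0$, so $u$ is a non-zero element of $\sp(\mathcal N(S))$ that is not norming for $S$. This is already a contradiction once we know that $\sp(\mathcal N(S))$ consists entirely of norming vectors (together with $0$) — which is exactly the content of Lemma \ref{sign}(ii): a non-negative combination of sign-compatible norming vectors is norming, and by iterating/rescaling one extends this; more directly, any element of $\sp(\mathcal N(S))$ with $\Vert S\Vert=1$ either is $0$ or, after normalizing, must be norming, since otherwise $\Vert Su\Vert<\Vert u\Vert$ would propagate to the finite spanning set in a way incompatible with those vectors being norming.

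The step I expect to be the crux is this last point: arguing that $\sp\bigl(\mathcal N(S)\bigr)\setminus\{0\}\subseteq\mathcal N(S)$, or at any rate that no non-zero element of $\sp(\mathcal N(S))$ has $\Vert Su\Vert<\Vert u\Vert$. In the generality of an arbitrary $S$ this is false, so the argument must genuinely use what is available here; the cleanest route is probably to note that we do \emph{not} need the full span statement — it suffices to observe that if $u\in\sp(\mathcal N(S))$, then writing $u=\sum t_j x_j$ with $x_j\in\mathcal N(S)\cap S_{E_N}$ and using sign-compatibility of the relevant $x_j$ (which can be arranged after grouping by sign pattern, as in the proof of Lemma \ref{finitesupport}), Lemma \ref{sign}(ii) gives $\Vert Su\Vert=\Vert u\Vert$ directly for non-negative combinations, and the general case reduces to this. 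If even that is not quite enough, one falls back on the maximality of $\delta$: the inequality $\Vert Su\Vert^p<\Vert u\Vert^p$ leaves ``room'' in the direction of $u$, and one shows that $S_{\mathbf y,\eta,\delta'}$ still has norm $1$ for some $\delta'>\delta$ by a perturbation estimate localized near $u$, exactly as in the proof of Lemma \ref{hardtruc} (Claims \ref{machin2}--\ref{machin4}), contradicting Lemma \ref{dim+2}(i). Either way the contradiction is obtained, completing the proof of the claim.
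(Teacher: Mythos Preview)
Your first part has a genuine gap. From $\Vert\widetilde Sz\Vert^p=\Vert Sz\Vert^p+2\delta^p\sum_l\vert\langle y_l^*,z\rangle\vert^p$ and $\Vert\widetilde Sz\Vert^p=\Vert z\Vert^p$, together with $\Vert Sz\Vert^p\le\Vert z\Vert^p$, you get $\sum_l\vert\langle y_l^*,z\rangle\vert^p\ge 0$, not $\le 0$. That inequality is trivial and yields no contradiction: there is nothing to prevent a vector $z\in E_N$ with $\langle y_l^*,z\rangle\neq 0$ for some $l$ from satisfying $\Vert Sz\Vert^p+2\delta^p\sum_l\vert\langle y_l^*,z\rangle\vert^p=\Vert z\Vert^p$ on purely arithmetic grounds. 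The paper supplies the missing ingredient via Lemma~\ref{Kansupp}: since $z\in E_N$ and $e_{N+1}$ have disjoint supports and $z\in\mathcal N(\widetilde S)$, the images $\widetilde Sz$ and $\widetilde Se_{N+1}$ must have disjoint supports; but $\mathrm{supp}(\widetilde Se_{N+1})=\{M+1,\dots,M+2L\}$, so this forces $\langle y_l^*,z\rangle=0$ for all $l$, whence $\widetilde Sz=Sz$ and $z\in\mathcal N(S)$.

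In your second part the computation is right but you made a sign slip, and that slip sent you down an unnecessary detour. From $\Vert Su\Vert^p+2L\eta^p\vert\mu\vert^p=\Vert u\Vert^p+\vert\mu\vert^p$ you obtain
\[
\Vert Su\Vert^p=\Vert u\Vert^p+(1-2L\eta^p)\vert\mu\vert^p>\Vert u\Vert^p,
\]
which \emph{immediately} contradicts $\Vert S\Vert=1$. Equivalently (and this is how the paper phrases it), $\Vert\widetilde Sz\Vert^p=\Vert Su\Vert^p+2L\eta^p\vert\mu\vert^p<\Vert Su\Vert^p+\vert\mu\vert^p\le\Vert u\Vert^p+\vert\mu\vert^p=\Vert z\Vert^p$, contradicting $z\in\mathcal N(\widetilde S)$. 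There is no need to argue that $\sp\bigl(\mathcal N(S)\bigr)\setminus\{0\}\subseteq\mathcal N(S)$; that statement is false in general and none of the suggested patches (sign-compatibility grouping, Lemma~\ref{sign}(ii), maximality of $\delta$) rescues it.
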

\begin{proof}[Proof of Claim \ref{noNs}] Assume that $\langle e_{N+1}^*,z\rangle=0$. Then $z$ and  $e_{N+1}$ have disjoint supports. Since $z\in\mathcal N(\widetilde S)$ and $p>2$, it follows that $\widetilde Sz$ and $\widetilde Se_{N+1}$ also have disjoint supports, by Lemma \ref{Kansupp}. By the definition of $\widetilde S=S_{\mathbf y, \eta,\delta}$, we thus see that $\langle y_l^*,z\rangle=0$ for $l=1,\dots ,L$. Hence $\widetilde Sz=Sz$, which contradicts our assumption that $z\in\mathcal N(\widetilde S)\setminus \mathcal N(S)$ since $\Vert \widetilde S\Vert =1=\Vert S\Vert$. 
\par\smallskip
Assume now that $P_Nz\in \sp\bigl(\mathcal N(S)\bigr)$. Then by the definition of $\widetilde S$ we have
\[ \widetilde Sz=S(P_Nz)+ \eta \, \langle e_{N+1}^*, z\rangle\sum_{l=1}^L (e_{M+2l-1}-e_{M+2l}),\]
so that $\Vert \widetilde Sz\Vert^p= \Vert S P_Nz\Vert^p + 2L \eta^p\vert \langle e_{N+1}^*,z\rangle\vert^p$. Since $\langle e_{N+1}^*,z\rangle\neq 0$ and $2L\eta^p<1$, it follows that 
\[ \Vert \widetilde Sz\Vert^p<\Vert S P_Nz\Vert^p +\vert \langle e_{N+1}^*,z\rangle\vert^p\leq \Vert P_Nz\Vert^p+ \vert \langle e_{N+1}^*,z\rangle\vert^p =\Vert z\Vert^p,\]
which contradicts the fact that $z\in\mathcal N(\widetilde S)$.
\epf

\begin{claim}\label{dim2} Let $z\in \mathcal N(\widetilde S)$, and write $z$ as $z=x+\lambda e_{N+1}$ with $x\in E_N$ and $\lambda\in\R$. Then the vector $z':= x-\lambda e_{N+1}$ also belongs to $\mathcal N(\widetilde S)$.
\end{claim}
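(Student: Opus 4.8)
The plan is to read the statement off directly from the ``doubled'' shape of the modification in Definition~\ref{modif}. The fresh coordinates come in pairs $e_{M+2l-1},e_{M+2l}$, and $\widetilde S=S_{\mathbf y,\eta,\delta}$ acts \emph{symmetrically} in these pairs on $E_N$ (the coefficient is $\delta\,\langle y_l^*,x\rangle$ on both $e_{M+2l-1}$ and $e_{M+2l}$) and \emph{antisymmetrically} on $e_{N+1}$ (the coefficient is $+\eta$ on $e_{M+2l-1}$ and $-\eta$ on $e_{M+2l}$). This is precisely what will make the flip $\lambda\mapsto-\lambda$ harmless.

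First I would write out, for $z=x+\lambda e_{N+1}$ with $x\in E_N$ and $\lambda\in\R$,
\[ \widetilde S z = Sx + \sum_{l=1}^L\Bigl( \bigl(\delta\,\langle y_l^*,x\rangle+\eta\lambda\bigr)\,e_{M+2l-1} + \bigl(\delta\,\langle y_l^*,x\rangle-\eta\lambda\bigr)\,e_{M+2l}\Bigr).\]
Since $Sx\in E_M$ while the remaining terms are supported in $\llb M+1,M+2L\rrb$, all the summands are disjointly supported, whence
\[ \Vert \widetilde S z\Vert^p = \Vert Sx\Vert^p + \sum_{l=1}^L\Bigl( \bigl\vert\delta\,\langle y_l^*,x\rangle+\eta\lambda\bigr\vert^p + \bigl\vert\delta\,\langle y_l^*,x\rangle-\eta\lambda\bigr\vert^p\Bigr).\]
The right-hand side is visibly invariant under $\lambda\mapsto-\lambda$, since that substitution merely interchanges the two summands inside each bracket; hence $\Vert\widetilde S z'\Vert=\Vert\widetilde S z\Vert$ for $z'=x-\lambda e_{N+1}$. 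On the other hand $x$ and $\lambda e_{N+1}$ have disjoint supports in $\ell_p$, so $\Vert z'\Vert^p=\Vert x\Vert^p+\vert\lambda\vert^p=\Vert z\Vert^p$.

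To conclude, recall that $\Vert\widetilde S\Vert=1$ by part~(i), and that $z\in\mathcal N(\widetilde S)$ means $z\neq 0$ and $\Vert\widetilde S z\Vert=\Vert z\Vert$. The two equalities just obtained then give $\Vert\widetilde S z'\Vert=\Vert z'\Vert$, and $z'\neq 0$ because $\Vert z'\Vert=\Vert z\Vert\neq 0$; so $z'\in\mathcal N(\widetilde S)$, as claimed. There is really no obstacle here: the only thing one needs to notice is the symmetry deliberately built into the modification, which is the whole reason Definition~\ref{modif} uses the pattern $e_{M+2l-1}\pm e_{M+2l}$ rather than single vectors.
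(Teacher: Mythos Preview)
Your proof is correct and is precisely the paper's argument spelled out in full: the paper simply asserts that $\Vert z'\Vert=\Vert z\Vert$ and that ``it is clear from the definition of $\widetilde S$ that $\Vert \widetilde S z'\Vert=\Vert \widetilde S z\Vert$'', while you have written down the explicit formula for $\widetilde S z$ and made the $\lambda\mapsto-\lambda$ symmetry visible.
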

\begin{proof}[Proof of Claim \ref{dim2}] We have  $\Vert z'\Vert=\Vert z\Vert$, and it is clear from the definition of $\widetilde S$ that $\Vert \widetilde Sz'\Vert=\Vert \widetilde Sz\Vert$. 
\epf 

\begin{remark}\label{span} With the notation of Claim \ref{dim2}, if $\lambda\neq 0$ then $e_{N+1}\in\sp(z,z')$. So, from Claims \ref{newnorm} and \ref{noNs}, we see that $e_{N+1}\in \sp\bigl(\mathcal N(\widetilde S)\bigr)$. This remark will be used in the proof of Lemma \ref{induction} below.
\end{remark}

We can now prove (ii), \mbox{\it i.e.} that $\dim\,\sp\bigl( \mathcal N(S_{\mathbf y,\eta,\delta})\bigr)\geq 2+ \dim\sp\bigl(\mathcal N(S)\bigr)$. 
Let us fix $z\in \mathcal N(\widetilde S)\setminus \mathcal N(S)$ given by Claim \ref{newnorm}. Write $z=x+\lambda e_{N+1}$ where $x\in E_N$, and let $z':= x-\lambda e_{N+1}$, so that $z'\in \mathcal N(\widetilde S)$ by Claim \ref{dim2}. Since $x\neq 0$ and $\lambda\neq 0$ by Claim \ref{noNs},  $z$ and $z'$ are linearly independent. Hence, to conclude the proof of (ii), it is enough to show that $\sp(z,z')\cap \sp\bigl(\mathcal N(S)\bigr)=\{ 0\}$. 

Let $u\in \sp(z,z')\cap \sp\bigl(\mathcal N(S)\bigr)$. There exist $a,b\in\R$ such that $u=az+bz'=(a+b) x+\lambda (a-b) e_{N+1}$. Since $\lambda \neq 0$ and $u\in \sp\bigl(\mathcal N(S)\bigr)\subseteq E_N$, we must have $a-b=0$. Hence $u=2a x$; and since $u\in \sp\bigl(\mathcal N(S)\bigr)$ and $x\not\in \sp\bigl(\mathcal N(S)\bigr)$, it follows that $a=0$. So $u=0$, as required.
\epf

\blm Assume that $p>2$. Let $N_0\in\Z_+$ and let $\varepsilon >0$. Let also $N,M\in\Z_+$ with $N\geq N_0$, and let $S\in\mathcal B(E_N,E_M)$ and $\mathbf y=(y_1^*,\dots ,y_L^*)$ be as in Definition \ref{modif}. If $\eta\in (0, (2L)^{-1/p})$ is close 
enough to $(2L)^{-1/p}$ {\rm (}depending only on $\varepsilon$ and $\mathbf y${\rm )} then, for any $\delta>0$ such that $\Vert S_{\mathbf y, \eta,\delta}\Vert= 1$, it holds that $\Vert S_{\mathbf y,\eta,\delta} e_n-Se_n\Vert <\varepsilon$ for all $n\in\llb0,N_0\rrb$.
\elm
\bpf For any $\delta>0$ and any $n\in \llb 0,N_0\rrb$, we have 
\[ \Vert S_{\mathbf y,\eta,\delta} e_n-Se_n\Vert^p = 2\left(\sum_{l=1}^L \vert \langle y_l^*, e_n\rangle\vert^p\right) \delta.\]
So it is enough to show that if we choose $\eta$ sufficiently close to $(2L)^{-1/p}$, then any $\delta$ such that $\Vert S_{\mathbf y, \eta,\delta}\Vert=1$ must be very small.

\smallskip Let $c_l:= \langle y_l^*, e_0\rangle$ for $l=1,\dots ,L$. For any $\delta>0$ such that $\Vert S_{\mathbf y, \eta,\delta}\Vert=1$ and for every $t>0$, we have 
\begin{align*} 1+t^p= \Vert t e_0+e_{N+1}\Vert^p&\geq \Vert S_{\mathbf y,\eta,\delta} (t e_0 +e_{N+1}) \Vert^p\\
&\geq   \sum_{l=1}^L \Bigl( \vert  \delta c_l t+\eta\vert^p +\vert \delta c_l t -\eta\vert^p\Bigr).
\end{align*}

Now, since $p>2$, we can make use of Lemma \ref{KanIneq}: for any $u,v\in \C$,
\[ \vert u+v\vert^p+\vert u-v\vert^p\geq 2\vert u\vert^p +p \,\vert u\vert^{p-2} \vert v \vert^2.\]
Taking $u:= \eta$ and $v:= \delta c_lt$ for $l=1,\dots ,L$, this gives
\[2L \eta^p +C  \eta^{p-2} \delta^{2} t^2\leq 1+t^p\;,\qquad\hbox{where}\quad C:= p\sum_{l=1}^L \vert c_l\vert^2.\] 

Thus, writing $2L\eta^p =1-\alpha$ and assuming, as we may, that $\alpha\leq 1/2$, we obtain
\[ \forall t>0\;:\; K\delta^2\leq \frac\alpha{t^2}+ +t^{p-2},\]
where 
$K:= C\times (4L)^{-1+\frac2p} $. Optimising with respect to $t$, \mbox{\it i.e.} taking $t:=\left(\frac{2\alpha}{p-2}\right)^{1/p}$, this gives 
\[ K\delta^2\leq C_p \, \alpha^{1-\frac2p}\]
for some constant $C_p$ depending only on $p$. Hence we see that $\delta$ is indeed very small if $\eta$ is close enough to $(2L)^{-1/p}$.
\epf

\subsection{Maximal modifications} The following definition looks rather natural.

\bdf Let $N,M\in\Z_+$ and let $S\in \mathcal B(E_N,E_M)$ with $\Vert S\Vert=1$. We say that an operator $\widetilde S\in\mathcal B(E_{N+1},\ell_p)$ is a \emph{maximal modification of $S$} if $\widetilde S$ is a $(\mathbf y, \eta,\delta)\,$-$\,$modification of $S$ for some triple $(\mathbf y,\eta,\delta)$ such that $\mathbf y$ is a basis of $\mathcal N(S)^\perp$.
\edf

\smallskip The interest of this definition lies in the fact that, roughly speaking, maximal modifications ``preserve the interval property''. This is the content of the next lemma.

\blm\label{induction} Assume that $p>2$. Let $N, M\in\Z_+$, and let $(S_0,\dots , S_K)$ be a finite sequence of operators, with $S_k\in\mathcal B(E_{N+k}, E_{M +L_k})$ for some $L_k\in\Z_+$ {\rm (}with $L_0=0${\rm )} and $\Vert S_k\Vert =1$, such that $S_{k+1}$ is a maximal modification of $S_k$ for every $k<K$. If $S_0$ is a special operator, then each $S_k$ has the interval property.
\elm
\bpf Since $S_0$ is a special operator, any $z\in\mathcal N(S_0)$ has support equal to $\llb 0,N\rrb$ by Lemma \ref{suppspec}. In particular,  $S_0$ has the interval property. 

\smallskip Let us first show that $S_1$ has the interval property. Let $z\in E_{N+1}$ be a norming vector for $S_1$. We have to show that ${\rm supp}(x)=\llb0,a\rrb$ for some $a\leq N+1$.  If $z\in \mathcal N(S_0)$, then ${\rm supp}(z)=\llb 0,N\rrb$ because $S_0$ is a special operator. So we assume that $z\not\in \mathcal N(S_0)$. Then $\langle e_{N+1}^*,z\rangle \neq 0$ and $P_N z\neq 0$ by Claim \ref{noNs}. We show that ${\rm supp}(z)=\llb 0,N+1\rrb$.

Choose $n\in \llb 0,N\rrb$ such that $\langle e_{n}^*,z\rangle \neq 0$ and, towards a contradiction, assume that there exists $n'\in\llb 0,N\rrb$ such that $\langle e_{n'}^*,z\rangle = 0$. Since $S_0\in\mathcal B(E_N,E_M)$ is a special operator, one can find $m\in\llb 0,M\rrb$ such that $m\in{\rm supp}(S_0 e_n)\cap {\rm supp}(S_0 e_{n'})$ but $m\not\in {\rm supp}(S_0 e_{n''})$ for any $n''\in\llb 0,N\rrb\setminus\{ n,n'\}$. Since $S_1$, being a modification of $S_0$, satisfies $P_{M} S_1P_N=S_0$ and $P_{M} S_1e_{N+1}=0$, we see that $m\in {\rm supp}(S_1 e_n)\cap {\rm supp}(S_1 e_{n'})$ and $m\not\in {\rm supp}(S_1 e_{n''})$ for any $n''\in \llb 0,N+1\rrb\setminus\{ n,n'\}$. Since $\langle e_{n}^*,z\rangle \neq 0$ and $\langle e_{n'}^*,z\rangle = 0$, it follows that $\langle e_m^*, S_1z\rangle\neq 0$, \mbox{\it i.e.} $m\in{\rm supp}(S_1z)$. However, since $z$ and $e_{n'}$  have disjoint supports and $z\in\mathcal N(S_1)$, the vectors $S_1e_{n'}$ and $S_1 z$ must have disjoint supports by Lemma \ref{Kansupp}. This is the required contradiction.

\smallskip Now, assume that we have been able to prove that $S_0,\dots ,S_k$ have the interval property for some $1\leq k<K$, and let us show that $S_{k+1}$ has the interval property. Let $z\in E_{N+k+1}$ be a norming vector for $S_{k+1}$. We have to show that ${\rm supp}(z)$ is an interval $\llb 0, L\rrb$. If $z\in \mathcal N(S_k)$, then we are done by our induction hypothesis; so we assume that $z\not\in \mathcal N(S_k)$. Then $\langle e_{N+k+1}^*, z\rangle\neq 0$ by Claim \ref{noNs}. Hence, we have to show that $\langle e_n^*, z\rangle \neq 0$ for all $n\in\llb 0 , N+k\rrb$.

Let us first consider $n$ such that $N<n\leq N+k$, and write $n=N+k'$ with $1\leq k'\leq k$. Towards a contradiction, assume that $\langle e_{N+k'}^*,z\rangle=0$. Then ${\rm supp}(S_{k+1} e_{N+k'})\cap {\rm supp}(S_{k+1}z)=\emptyset$, by Lemma \ref{Kansupp}. In particular, since ${\rm supp}(S_{k'} e_{N+k'})\subseteq {\rm supp}(S_{k+1} e_{N+k'})$ (because $S_{k+1}$ is obtained from $S_{k'}$ by successive modifications), we have ${\rm supp}(S_{k+1}z) \cap {\rm supp}(S_{k'} e_{N+k'})=\emptyset$. Denoting by $\mathbf y=(y_1,\dots ,y_L^*)\subseteq \mathcal N(S_{k'-1})^\perp$ the sequence of linear functionals used to construct $S_{k'}$ from $S_{k'-1}$, this means that $\langle y_l^*, P_{N+k'-1}z\rangle=0$ for $l=1,\dots ,L$; and since $S_{k'}$ is a \emph{maximal} modification of $S_{k'-1}$, \mbox{\it i.e.} $\mathbf y$ is a basis of $\mathcal N(S_{k'-1})^\perp$, it follows that $P_{N+k'-1} z\in \sp\bigl( \mathcal N( S_{k'-1})\bigr)$. In particular, $P_{N+k'-1} z\in \sp\bigl(\mathcal N(S_k)\bigr)$. Moreover, by successive applications of Remark \ref{span}, we see that $e_{N+r}\in\sp\bigl( \mathcal N(S_r)\bigr)$ for all $1\leq r\leq K$. In particular, $e_{N+r}\in\sp\bigl( \mathcal N(S_k)\bigr)$ for all $k'\leq r\leq k$. So we get that $P_{N+k}z\in \sp\bigl( \mathcal N(S_k)\bigr)$, which contradicts Claim \ref{noNs} since $z\in\mathcal N(S_{k+1})\setminus \mathcal N(S_k)$.

It remains to consider the case where $n\in\llb 0,N\rrb$. Using the fact that $S_0$ is a special operator, one shows that $\langle e_n,z\rangle\neq 0$ exactly as in the above proof for $S_1$.
\epf

We are finally ready to prove Proposition \ref{keymachin}.

\subsection{Proof of Proposition \ref{keymachin}} Assume that $p=3$. We have to show that for any $A\in\mathcal B_1(\ell_p)$, for any $N_0\in\Z_+$ and any $\varepsilon >0$, one can find $T\in\mathcal B_1(\ell_p)$ such that $\overline{\sp}\bigl(\mathcal N(T)\bigr)=\ell_p$ and $\Vert Te_n-Ae_n\Vert <\varepsilon$ for all $n\in\llb 0,N_0\rrb$. Let us fix $A$, $N_0$ and $\varepsilon$.

\smallskip We only need to show that there exist $N', M'\in\Z_+$ with $N'\ge N_0$ and $S\in\mathcal B(E_{N'}, E_{M'})$ with $\Vert S\Vert=1$ such that $\sp\bigl( \mathcal N(S)\bigr)=E_{N'}$ and  $\Vert Se_n-Ae_n\Vert <\varepsilon$ for all $n\in\llb 0,N_0\rrb$. Indeed, if  this is has been done then, denoting by $\mathbf S$  the canonical forward shift on $\ell_p$, the operator $T:= S+ \mathbf S^{M'} (I-P_{N'})$ does the job because $\sp\bigl( \mathcal N(T)\bigr)$ contains $e_n$ for all $n\in\Z_+$. 

\smallskip
Let $N:= N_0+1$. By Lemma \ref{specapprox}, one can find $M\in\Z_+$ and a special operator $S_0\in\mathcal B(E_N,E_M)$ such that $\Vert S_0\Vert=1$ and $\Vert S_0e_n-Ae_n\Vert <\varepsilon/2$ for all $n\in\llb 0,N_0\rrb$. Let $d_0:= \dim \sp\bigl( \mathcal N(S_0)\bigr)$. If $d_0=N+1$, \mbox{\it i.e.} $\sp\bigl( \mathcal N(S_0)\bigr)= E_N$, we are done. Otherwise, by Lemmas \ref{suppspec}, \ref{finitesupport}, \ref{hardtruc} and \ref{dim+2}, one can find $S_1\in\mathcal B(E_{N+1}, M_1)$, a maximal modification of $S_0$, such that $\Vert S_1e_n-S_0e_n\Vert <\varepsilon/2^2$ for $n=0,\dots ,N_0$ and $\dim\sp\bigl(\mathcal N(S_1)\bigr)\geq d_0+2$. If $d_0+2=N+2=\dim E_{N+1}$, we are done. Otherwise, we can repeat the process (since $S_1$ has the interval property by Lemma \ref{induction} and hence $\mathcal N(S_1)\cap S_{E_{N+1}}$ is finite by Lemma \ref{finitesupport}), and continue as long as possible. In other words, as long as this is possible, we construct operators $S_k\in\mathcal B(E_{N+k}, M_k)$, $k\geq 0$ such that $S_{k+1}$ is a maximal modification of $S_k$ with $\Vert S_{k+1}e_n -S_ke_n\Vert  <\varepsilon/2^{k+1}$ for $n=0,\dots ,N_0$ and $\dim \sp\bigl(\mathcal N(S_k)\bigr)\geq d_0+2k$. Since at each step the dimension of $\sp\bigl( \mathcal N(S_k)\bigr)$ is increased at least by $2$ but the dimension of $E_{N+k}$ is only increased by $1$, the process must end after finitely many steps: there exists $K$ such that $\dim\sp\bigl( \mathcal N(S_K)\bigr)\geq \dim E_{N+K}$, and hence $\sp\bigl( \mathcal N(S_K)\bigr)=E_{N+K}$. Setting $N':=N+K$, $M':=M_K$ and $S:=S_K$, we have $\Vert Se_n-Ae_n\Vert < \varepsilon\sum_{k=0}^K 2^{-k-1}<\varepsilon$ for $n=0,\dots ,N_0$, which concludes the proof.
\hfill$\square$

\subsection{A remark concerning the case $p=4$} In this last sub-section, we would like to point out the following fact, which is in the spirit of Lemma \ref{sign}. 

\begin{fact}\label{p=4} Assume that $p=4$. Let $E$ be a subspace of $\ell_p$, and let $S\in\mathcal B(E,\ell_p)$. If $x,y\in\mathcal N(S)$ and if $\sp(x,y)$ contains a vector $z\in\mathcal N(S)$ which is not a multiple of $x$ or $y$, then $\sp(x,y)\subseteq\mathcal N(S)\cup\{ 0\}$. In particular, if 
$\dim(E)=2$ and if $S$ admits 3 pairwise linearly independent norming vectors, then $S$ is a scalar multiple of an isometry.
\end{fact}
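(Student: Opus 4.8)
The plan is to exploit the very special structure of the $\ell_4\,$-$\,$norm: for \emph{real} scalars and $p=4$, the fourth power $\Vert\cdot\Vert^{4}$ is computed coordinatewise by the honest degree-$4$ monomial $t\mapsto t^{4}$, so that the deficiency function
\[
\varphi_{x,y}(t):=\Vert x+ty\Vert^{4}-\Vert S(x+ty)\Vert^{4}
\]
is a genuine polynomial in $t$ of degree at most $4$. This is precisely the $p=4$ counterpart of the cubic computations behind Lemma~\ref{sign}, and everything will follow from analysing this polynomial.

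First I would normalise $\Vert S\Vert=1$ (the case $S=0$ being trivial), so that $\Vert Sv\Vert\le\Vert v\Vert$ for every $v\in E$, with equality exactly on $\mathcal N(S)\cup\{0\}$, and $\varphi_{x,y}\ge 0$ on $\R$. Since $\mathcal N(S)$ is a cone, I may replace $x$ and $y$ by suitable nonzero scalar multiples without changing $\sp(x,y)$; as $z\in\sp(x,y)$ is a multiple of neither $x$ nor $y$, both its coefficients are nonzero, and this reduction lets me assume $z=x+y$. Expanding coordinatewise,
\[
\varphi_{x,y}(t)=\sum_{n\ge 0}(x_n+ty_n)^{4}-\sum_{m\ge 0}\bigl((Sx)_m+t(Sy)_m\bigr)^{4},
\]
where every coefficient series converges by Hölder's inequality since $x,y\in\ell_4$ and $Sx,Sy\in\ell_4$. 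The coefficient of $t^{4}$ equals $\Vert y\Vert^{4}-\Vert Sy\Vert^{4}=0$ because $y\in\mathcal N(S)$, so $\deg\varphi_{x,y}\le 3$.

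The decisive step is then elementary. A real polynomial of degree at most $3$ that is nonnegative on all of $\R$ cannot have odd degree (an honest cubic, or an honest affine map, takes negative values), so it is a constant or a nonnegative quadratic; and a nonnegative quadratic has at most one real root. But $\varphi_{x,y}$ vanishes at $t=0$ (since $x\in\mathcal N(S)$) and at $t=1$ (since $z=x+y\in\mathcal N(S)$), two distinct points, so it cannot be a nonzero quadratic, hence it is constant, and $\varphi_{x,y}(0)=0$ forces $\varphi_{x,y}\equiv 0$. Thus $\Vert S(x+ty)\Vert=\Vert x+ty\Vert$ for all $t\in\R$, and by homogeneity $ax+by\in\mathcal N(S)\cup\{0\}$ for all $a,b\in\R$; that is, $\sp(x,y)\subseteq\mathcal N(S)\cup\{0\}$. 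For the final assertion, if $\dim E=2$ and $S$ admits three pairwise linearly independent norming vectors $x,y,z$, then $\{x,y\}$ is a basis of $E$ and $z=\alpha x+\beta y$ with $\alpha,\beta\neq 0$, so the first part gives $E=\sp(x,y)\subseteq\mathcal N(S)\cup\{0\}$, i.e. $\Vert Sv\Vert=\Vert S\Vert\,\Vert v\Vert$ for all $v\in E$, so $S$ is $\Vert S\Vert$ times an isometry.

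I do not anticipate a real obstacle here: the only ingredient genuinely specific to $p=4$ is the polynomiality of $\varphi_{x,y}$ — for other $p>2$ one only gets a $\mathcal C^{2}$ function, forcing the more delicate analysis in Lemma~\ref{sign} — and once that is available the rest is a one-line fact about low-degree nonnegative polynomials. The only routine care required is the bookkeeping that the series defining the coefficients of $\varphi_{x,y}$ converge, which is immediate from membership in $\ell_4$ together with Hölder.
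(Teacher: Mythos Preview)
Your argument is correct and neater than the paper's for the Fact itself, though it takes a genuinely different route. The paper works with the two-variable polynomial $\varphi(s,t):=\Vert sx+ty\Vert^4-\Vert S(sx+ty)\Vert^4$, observes that $\varphi(s,0)\equiv 0\equiv\varphi(0,t)$ forces $\varphi(s,t)=st\,\psi(s,t)$ with $\deg\psi\le 2$, and then uses that $\partial_t\varphi(s,0)\equiv 0\equiv\partial_s\varphi(0,t)$ (since $\varphi$ has a minimum along both axes) to conclude $\psi(s,t)=c\,st$, hence $\varphi(s,t)=c\,s^2t^2$; a single extra zero off the axes then forces $c=0$. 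You instead restrict to one variable, use $y\in\mathcal N(S)$ to kill the $t^4$ coefficient, and then invoke the elementary fact that a nonnegative real polynomial of degree $\le 3$ with two distinct roots must vanish identically. Your approach is shorter and avoids partial derivatives; what the paper's approach buys is the explicit formula $\varphi(s,t)=c\,s^2t^2$, which is exactly the content of the Remark immediately following the Fact (the ``symmetry'' identity $\Vert sx+ty\Vert^4-\Vert S(sx+ty)\Vert^4=(\Vert x+y\Vert^4-\Vert S(x+y)\Vert^4)\,s^2t^2$). Your proof establishes the Fact but does not directly yield that remark.
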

\bpf We may assume that $\Vert S\Vert=1$. Let $x,y\in\mathcal N(S)$, and let us consider the function $\varphi:\R^2\to \R$ defined by 
\[ \varphi(s,t):= \Vert sx+ty\Vert^4-\Vert S(sx+ty)\Vert^4.\]

Since $p=4$, this a polynomial function of degree at most $4$. Moreover, we have $\varphi(s,t)\geq 0$ on $\R^2$, and $\varphi(s,0)\equiv 0\equiv \varphi(0,t)$. We have to show that if there exists $(s_0, t_0)$ such that $s_0 t_0\neq 0$ and $\varphi(s_0,t_0)=0$, then $\varphi(s,t)=0$ for all $(s,t)\in\R^2$. 

\smallskip Since  $\varphi(s,0)=0=\varphi(0,t)$, we may write
\[ \varphi(s,t)=st\, \psi(s,t),\]
where $\psi$ is a polynomial function of degree at most $2$. Moreover since $\varphi(s,t)\geq 0$ on $\R^2$, the function $\varphi$ has a minimum at $(s,0)$ and at $(0,t)$ for any $s,t\in\R$. So we must have 
\[ \frac{\partial\varphi}{\partial t}(s,0)\equiv 0\equiv \frac{\partial\varphi}{\partial s}(0,t),\] \mbox{\it i.e.}
\[ s\psi(s,0)\equiv 0\equiv t\psi(0,t).\]

This means that $\psi(s,0)\equiv 0\equiv \psi(0,t)$, and hence that $\psi(s,t) =c \, st$ for some constant $c$. So we obtain
\[ \varphi(s,t) =c\, s^2 t^2,\]
and the result follows.
\epf

\begin{remark} The proof of Fact \ref{p=4} has shown the following. Assume that $p=4$,  let $E$ be a subspace of $\ell_p$ and let $S\in\mathcal B(E,\ell_p)$ with $\Vert S\Vert=1$. If $x,y\in\mathcal N(S)$ then, for all $s,t\in\R$:  
\[\Vert sx+ty\Vert^4-\Vert S(sx+ty)\Vert^4=\bigl(  \Vert x+y\Vert^4-\Vert S(x+y)\Vert^4\bigr) \, s^2t^2.\]
In particular, we see that $\Vert sx+ty\Vert^4-\Vert S(sx+ty)\Vert^4=\Vert sy+tx\Vert^4-\Vert S(sy+tx)\Vert^4$; this is a ``symmetry'' property that was not immediately apparent. It seems natural to wonder whether this symmetry property holds true for any even integer $p=2n$. Note that this is indeed the case for $p=2$, since a direct computation using the scalar product reveals that $\Vert sx+ty\Vert^2-\Vert S(sx+ty)\Vert^2=\bigl(  \Vert x+y\Vert^2-\Vert S(x+y)\Vert^2\bigr) \, st$.
\end{remark}

\begin{remark} It is shown in \cite{Debm} that if $E=\ell_p^2$ and $p>1$ is arbitrary, then any $S\in \mathcal B(E)$ having 3 pairwise linearly independent norming vectors is a scalar multiple of an isometry. This might be true for any $2\,$-$\,$dimensional subspace of $\ell_p$. In fact, it is tempting to ``conjecture'' the following: if $E$ is a $d\,$-$\,$dimensional subspace of $\ell_p$ and if $S\in\mathcal B(E,\ell_p)$ admits $d+1$ norming vectors such that any $d$ of them are linearly independent, then $S$ is a scalar multiple of an isometry.
\end{remark}

\section{Typical properties of Hilbert space contractions}\label{SecThree}
In this section, we illustrate the point of continuity approach by giving short and streamlined proofs of two nice results due to respectively to Eisner \cite{E} and Eisner-M\'atrai \cite{EM} concerning Hilbert space contractions. 
We start with the following proposition, which is the main result of \cite{E}.

\bpr\label{Eisner} A typical $T\in \mathcal (B_1(\ell_2), \emph{\wot})$ is unitary.
\epr
\bpf This follows immediately from Corollary \ref{CPCom} and part (2) of  Proposition \ref{CPH}.
\epf

Now we turn to a rather surprising result from \cite{EM}. Let us denote by $B_\infty$ the backward shift with infinite multiplicity, \mbox{\it i.e.} the canonical backward shift acting on $\ell_2(\Z_+,\ell_2)$. 
\bth\label{Eisner-Matrai} A typical $T\in\bigl( \mathcal B_1(\ell_2),\emph{\sot}\bigr)$ is unitarily equivalent to $B_\infty$.
\eth

The proof relies on the next two lemmas, which we state in greater generality than what is needed for the proof of Theorem \ref{Eisner-Matrai}.

\blm\label{C0} If $X$ is any separable Banach space, then a typical $T\in (\bbx,\emph{\sot})$ is strongly stable, \mbox{\it i.e.} such that $T^n\xrightarrow{\emph{\sot}}0$ as $n\to\infty$.
\elm
\bpf This is proved in \cite[Lemma 5.9]{EM} or \cite[Proposition 3.7]{GMM2}. However, the proof is so short that we can reproduce it. Let $D$ be a countable dense subset of $X$. Then since $\Vert T^nx\Vert$ is non-increasing for any $T\in\bbx$ and any $x\in X$, an operator $T\in\bbx$ is strongly stable if and only if $\forall z\in D\;:\; \inf_{n\in\N} \Vert T^n z\Vert=0$. It follows that the set of strongly stable operators is $\gd$ in $(\bbx,\sot)$; and this set is also dense since it contains all $T\in\bbx$ such that $\Vert T\Vert<1$.
\epf
\par\smallskip
Recall that an operator $T\in\bx$ is said to be \emph{upper semi-Fredholm} if $T$ has closed range and $\dim\ker(T)<\infty$. Recall also that the Banach space $X$ has the \emph{Metric Approximation Property} if the identity operator $I_X$ can be uniformly approximated on compact sets by finite rank operators in $\bbx$.
\blm\label{Fredholm} Assume that the Banach space $X$ has the \emph{MAP}.
Then a typical $T\in(\bbx, \emph{\sot})$ is not upper semi-Fredholm.
\elm
\bpf 
We need the following fact, which is proved in \cite[Lemma 5.13]{EM} when $X$ is a Hilbert space.
\begin{fact}\label{EMn} A typical $T\in(\bbx,{\sot})$ has the following property: for any $\varepsilon>0$ and any $n\in\N$, there exists a subspace $E\subseteq X$ with $\dim(E)> n$ such that $\Vert T_{| E}\Vert<\varepsilon$. \end{fact}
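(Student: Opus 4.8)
The plan is to express the set of $T\in\bbx$ having the stated property as a countable intersection of $\sot\,$-$\,$open dense subsets of $\bbx$, and then to invoke the Baire Category Theorem (recall that $(\bbx,\sot)$ is Polish since $X$ is separable). Concretely, for $k,n\in\N$ I would set
\[ \mathcal A_{k,n}:=\bigl\{ T\in\bbx\,;\; \exists\,E\subseteq X \text{ with }\dim E=n+1\text{ and }\Vert T_{|E}\Vert<1/k\bigr\}.\]
Replacing the arbitrary $\varepsilon>0$ by $1/k$ is harmless, so the set of operators enjoying the property is exactly $\bigcap_{k,n\in\N}\mathcal A_{k,n}$, and the whole argument reduces to checking that each $\mathcal A_{k,n}$ is $\sot\,$-$\,$open and $\sot\,$-$\,$dense in $\bbx$.

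For the openness of $\mathcal A_{k,n}$, the key point is that for a \emph{fixed} finite-dimensional subspace $E$ the restriction norm $T\mapsto\Vert T_{|E}\Vert$ is $\sot\,$-$\,$continuous: choosing a basis $x_1,\dots,x_{n+1}$ of $E$, a routine equivalence-of-norms estimate bounds $\Vert S_{|E}-T_{|E}\Vert$ by a constant $C=C(E)$ times $\max_i\Vert Sx_i-Tx_i\Vert$. Hence $\{S\in\bbx\,;\,\Vert S_{|E}\Vert<1/k\}$ is $\sot\,$-$\,$open, and $\mathcal A_{k,n}$ is a union of such sets over all admissible $E$, so it is $\sot\,$-$\,$open. (This is precisely why one should work with finite-dimensional witnessing subspaces rather than arbitrary, possibly infinite-dimensional, ones.)

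For density I would use the MAP. Given $T_0\in\bbx$ and a basic $\sot\,$-$\,$neighbourhood of $T_0$ determined by vectors $z_1,\dots,z_r\in X$ and $\delta>0$, the MAP provides a finite-rank operator $R\in\bbx$ with $\Vert Rz_j-z_j\Vert<\delta$ for $j=1,\dots,r$. Then $S:=T_0R$ lies in $\bbx$ and in the prescribed neighbourhood, since $\Vert Sz_j-T_0z_j\Vert=\Vert T_0(Rz_j-z_j)\Vert<\delta$. On the other hand $\ker S\supseteq\ker R$, and $\ker R$ has finite codimension in $X$ because $R$ has finite rank; as $X$ is infinite-dimensional, $\ker S$ is therefore infinite-dimensional and contains a subspace $E$ with $\dim E=n+1$, on which $\Vert S_{|E}\Vert=0<1/k$. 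Thus $S\in\mathcal A_{k,n}$, so $\mathcal A_{k,n}$ meets every basic $\sot\,$-$\,$neighbourhood of every operator, i.e. it is $\sot\,$-$\,$dense. Applying the Baire Category Theorem to $(\bbx,\sot)$ then finishes the proof.

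I do not expect a genuine obstacle here: the only non-formal input is the MAP, and it enters exactly to keep the finite-rank approximant $R$, and hence $T_0R$, inside the unit ball while still approximating the identity on the finite set $\{z_1,\dots,z_r\}$; without norm control one could not conclude that $S=T_0R$ is itself a contraction. The mildly delicate step is the openness of $\mathcal A_{k,n}$, handled as indicated above.
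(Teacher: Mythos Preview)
Your proof is correct and follows essentially the same approach as the paper. The paper's proof is simply more terse: it observes that the set in question is $\sot$-$G_\delta$ (by the same openness argument you give) and $\sot$-dense because it contains all finite rank contractions, which are $\sot$-dense in $\bbx$ thanks to the MAP; your construction $S=T_0R$ with $R$ a finite-rank MAP approximant of the identity is exactly an explicit verification of this last density claim.
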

\begin{proof}[\it Proof of Fact \ref{EMn}] For any fixed finite-dimensional subspace $E\subseteq X$ and any $\varepsilon >0$, the set $\{ T\in\bbx;\; \Vert T_{| E}\Vert<\varepsilon\}$ is easily seen to be \sot$\,$-$\,$open in $\bbx$. So the set of all $T\in\bbx$ with the above property is $\sot\,$-$\,\gd$. Moreover, this set is also \sot$\,$-$\,$dense in $\bbx$ since it contains all finite rank contraction operators (and the latter are dense in $\bbx$ thanks to the MAP).
\end{proof}

For any $n\in\Z_+$, let us set \[ \mathcal M_n:=\bigl\{ T\in \bbx;\; \hbox{$T$ has closed range and}\;\dim\ker(T)=n\bigr\}.\]

\noindent If $T\in\mathcal M_n$, then there is a subspace $F\subseteq X$ such that ${\rm codim}(F)=n$ and $T_{| F}$ is an embedding. By Fact \ref{EMn}, it follows that each set $\mathcal M_n$ is \sot$\,$-$\,$meager in $\bbx$; which proves the lemma.
\epf

\begin{remark}\label{zarbi} Lemma \ref{Fredholm} has the following rather curious consequence: if a typical $T\in(\bbx,{\sot})$ has closed range, then a typical $T\in\bbx$ has an infinite-dimensional kernel.
\end{remark}

\begin{proof}[\it Proof of Theorem \ref{Eisner-Matrai}] In this proof, the word ``typical'' refers to the topology \sot.
By the Wold Decomposition Theorem, an operator $T\in\mathcal B_1(\ell_2)$ is unitarily equivalent to $B_\infty$ if and only if $T$ is a co-isometry with infinite-dimensional kernel and $T^n\xrightarrow{\sot} 0$ as $n\to \infty$. So, by the Baire Category Theorem, it is enough to prove separately the following three facts: a typical $T\in\mathcal B_1(\ell_2)$ is a  co-isometry; a typical $T\in\mathcal B_1(\ell_2)$ is such that $T^n\xrightarrow{\sot} 0$ as $n\to\infty$; and a typical $T\in\mathcal B_1(\ell_2)$ is such that $\dim\ker(T)=\infty$. The first fact follows from part (3) of Proposition \ref{CPH} and Corollary \ref{CPCom}. The second fact is Lemma \ref{C0}. As to the third fact, note that a typical $T\in\mathcal B_1(\ell_2)$ is surjective since $T^*$ is an isometry, and hence a typical $T\in\mathcal B_1(\ell_2)$ has closed range. By Remark \ref{zarbi}, it follows that a typical $T\in\mathcal B_1(\ell_2)$ is such that $\dim\ker(T)=\infty$.
\epf

\begin{remark} As shown in \cite[Proposition 2.7 and Corollary 6.2]{EM}, the situation is quite the opposite if one considers the topologies \wot\ and \sote\ rather than \sot; namely, all orbits of unitary equivalence are \wot$\,$-$\,$meager and \sote -$\,$meager.
\end{remark}

\smallskip To conclude this section, we show that Theorem \ref{Eisner-Matrai} fails on $\ell_p$ for any  $p\neq 2$. 
\bpr\label{non-EM} Let $X=\ell_p$, $1\leq p\neq 2<\infty$. Denote by ${\rm Iso}(X)$ the group of all surjective  linear isometries of $X$. For any $A\in\bbx$, the set $\bigl\{ J^{-1} AJ;\; J\in{\rm Iso}(X)\bigr\}$ is meager in $\bigl(\bbx,\emph{\sot}\bigr)$.
\epr

\smallskip The proof of Proposition \ref{non-EM} relies on the following lemma.
\blm\label{nonsense} Let $X$ be a {\rm (}separable{\rm )} Banach space, and let $\mathcal A\subseteq \bbx$. Assume that there exists some vector $e\neq 0$ in $X$ such that the set $\{ Te;\; T\in\mathcal A\}$ is meager in $X$. Then $\mathcal A$ is meager in $\bigl(\bbx,\tau\bigr)$ for any topology $\tau$ on $\bbx$ finer than $\emph{\sot}$ and weaker than the operator norm topology.
\elm
\bpf We may assume that $\Vert e\Vert =1$. Let us denote by $B_X$ the closed unit ball of $X$ and by $\delta_e:\bbx\to B_X$ the evaluation map defined by $\delta_e(T):=Te$.
\begin{claim}\label{vraiment?} The map $\delta_e:\bbx\to B_X$ is
is continuous on $(\bbx,\tau)$ and, for any non-empty $\tau\,$-$\,$open set $\mathcal O\subseteq \bbx$, the set $\delta_e(\mathcal O)$ has non-empty interior in $B_X$.
\end{claim}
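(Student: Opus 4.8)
The continuity statement should be essentially immediate. By the very definition of $\sot$, the evaluation map $T\mapsto Te$ is continuous from $(\bbx,\sot)$ into $(X,\Vert\,\cdot\,\Vert)$: the preimage of a norm ball $\{y\in X;\ \Vert y-y_0\Vert<\varepsilon\}$ is $\{T\in\bbx;\ \Vert Te-y_0\Vert<\varepsilon\}$, which is $\sot\,$-$\,$open. Since $\tau$ is finer than $\sot$, this set is also $\tau\,$-$\,$open, so $\delta_e:(\bbx,\tau)\to B_X$ is continuous. No real work is needed here.

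For the second assertion, the plan is to fix a non-empty $\tau\,$-$\,$open set $\mathcal O\subseteq\bbx$, pick $T_0\in\mathcal O$, and use that $\tau$ is \emph{weaker} than the operator-norm topology to obtain $r\in(0,1]$ such that every $T\in\bbx$ with $\Vert T-T_0\Vert<r$ lies in $\mathcal O$. The key observation is that $T_0$ may have operator norm exactly $1$, leaving no room to perturb it within $\bbx$; I would remedy this by passing to $T_1:=(1-r/2)T_0$, which satisfies $\Vert T_1-T_0\Vert=(r/2)\Vert T_0\Vert<r$ (hence $T_1\in\mathcal O$) and $\Vert T_1\Vert\le 1-r/2$, so in particular $\Vert T_1 e\Vert\le 1-r/2$. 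I expect this normalisation step to be the only point requiring a moment's thought; everything after it is routine norm bookkeeping.

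Next I would use Hahn--Banach to pick $e^*\in X^*$ with $\Vert e^*\Vert=1$ and $\pss{e^*}{e}=\Vert e\Vert=1$, and then realise every vector $y$ near $T_1 e$ as $\delta_e(T)$ for a suitable $T\in\mathcal O$ via a rank-one perturbation of $T_1$. Concretely, for $y\in X$ with $\Vert y-T_1 e\Vert<r/2$ I would set $T_y x:=T_1 x+\pss{e^*}{x}\,(y-T_1 e)$. Then $T_y e=T_1 e+\pss{e^*}{e}(y-T_1 e)=y$, while $\Vert T_y-T_1\Vert=\Vert e^*\Vert\,\Vert y-T_1 e\Vert<r/2$. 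Consequently $\Vert T_y\Vert\le\Vert T_1\Vert+r/2\le 1$, so $T_y\in\bbx$, and $\Vert T_y-T_0\Vert\le\Vert T_y-T_1\Vert+\Vert T_1-T_0\Vert<r/2+r/2=r$, so $T_y\in\mathcal O$. Hence $y=\delta_e(T_y)\in\delta_e(\mathcal O)$.

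Finally I would note that the open ball $\{y\in X;\ \Vert y-T_1 e\Vert<r/2\}$ is entirely contained in $B_X$, since $\Vert y\Vert\le\Vert T_1 e\Vert+r/2\le 1$ for such $y$; by the previous paragraph it is contained in $\delta_e(\mathcal O)$, so $\delta_e(\mathcal O)$ has non-empty interior in $B_X$, which completes the proof of the claim. The argument uses nothing beyond Hahn--Banach, the norm estimates on rank-one operators, and the two comparability hypotheses relating $\tau$ to $\sot$ and to the operator-norm topology.
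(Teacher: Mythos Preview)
Your argument is correct and follows essentially the same route as the paper: both use that $\tau$ is coarser than the norm topology to get a norm-ball inside $\mathcal O$, and then realise nearby vectors as images of $e$ under rank-one perturbations $T+e^*\otimes(\cdot)$. The only cosmetic difference is that the paper picks $T_0\in\mathcal O$ with $\Vert T_0\Vert<1$ directly (using that $\mathcal O$ is norm-open and the open unit ball is norm-dense in $\bbx$), whereas you pick an arbitrary $T_0$ and rescale to $T_1=(1-r/2)T_0$; your version is slightly more explicit but amounts to the same thing.
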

\begin{proof}[Proof of Claim \ref{vraiment?}]  
That $\delta_e$ is continuous on $(\bbx,\tau)$ is clear since $\tau$ is finer than \sot. Let $\mathcal O\subseteq \bbx$ be non-empty and $\tau\,$-$\,$open. Since $\tau$ is weaker than the operator norm topology, we can choose $T_0\in\mathcal O$ such that $\Vert T_0\Vert<1$, and then $\delta>0$ such that the open ball $B(T_0,\delta)$ is contained in $\mathcal O$. Let $x_0:= T_0e$. For any $x\in B(x_0,\delta)$, the operator $T:= T_0+e^*\otimes(x-x_0)$ is in $B(T_0,\delta)$, hence in $\mathcal O$, and satisfies $Te=x$. This shows that $\delta_e(\mathcal O)$ has non-empty interior.
\epf
It follows from Claim \ref{vraiment?} that for any meager set $M\subseteq B_X$, the set $\delta_e^{-1}(M)$ is meager in $(\bbx,\tau)$: indeed, if $F\subseteq B_X$ is a closed set with empty interior, then $\delta_e^{-1}(F)$ is closed in $(\bbx,\tau)$, and it must have empty interior because $\delta_e\bigl(\delta_e^{-1}(F)\bigr)\subseteq F$. Taking $M:=\{ Te;\; T\in\mathcal A\}=\delta_e(\mathcal A)$, we immediately get that $\mathcal A$ is meager in $(\bbx,\tau)$.
\epf

\begin{remark} Let $B_\infty$ be the backward shift with infinite multiplicity acting on $H=\ell_2(\Z_+, \ell_2)$. By Theorem \ref{Eisner-Matrai} and Lemma \ref{nonsense}, we see that for any $u\in H\setminus\{ 0\}$, the set $\Omega_u:=\{ J^{-1} B_\infty J u;\; J\in{\rm Iso}(H)\}$ is non-meager in $H$. In fact, one can check directly that $\Omega_u$ contains every vector $v\in H$ such that $\Vert v\Vert\leq \Vert u\Vert$ and $\sqrt{\Vert u\Vert^2-\Vert v\Vert^2}\, \Vert v\Vert \geq \vert \langle u,v\rangle\vert$, and hence that $\Omega_u$ has non-empty interior. Indeed, since $\dim\ker(B_\infty)\geq 2$, these conditions allow one to find $v', z\in\ker(B_\infty) $ such that $\Vert v'\Vert =\Vert v\Vert$, $\Vert z\Vert =\sqrt{\Vert u\Vert^2-\Vert v\Vert^2}$ and $\langle z,v'\rangle=\langle u,v\rangle$. Then $u':= z+B_\infty^* v'$ satisfies $\Vert u'\Vert =\Vert u\Vert$ and $B_\infty u'=v'$ because $B^*_\infty$ is an isometry,  and $\langle u',v'\rangle=\langle u,v\rangle$. Since $\Vert u'\Vert =\Vert u\Vert$, $\Vert v'\Vert=\Vert v\Vert$ and $\langle u',v'\rangle=\langle u,v\rangle$, one can find $J\in{\rm Iso}(H)$ such that $Ju=u'$ and $Jv=v'$, and we have $B_\infty Ju=Jv$.
\end{remark}

\begin{proof}[Proof of Proposition \ref{non-EM}] Let $A\in\mathcal B_1(\ell_p)$. By Lemma \ref{vraiment?}, it is enough to find some vector $e\neq 0$ in $\ell_p$ such that the set $\bigl\{ J^{-1}AJe;\; J\in{\rm Iso}(\ell_p)\bigr\}$ is meager in $\ell_p$. We take $e:=e_0$, the first vector of the canonical basis of $\ell_p$.

It is well known (see \mbox{e.g.} \cite[Proposition 2.f.14]{LT} that if $J\in{\rm Iso}(\ell_p)$, then there exist a permutation $\sigma:\Z_+\to\Z_+$ and scalars $\lambda_j\in\T$, $j\in\Z_+$, such that 
\[ \forall j\in\Z_+\;:\; Je_j=\lambda_j e_{\sigma(j)}.\]
We then have
\[ (J^*)^{-1}e_k^*=\lambda_{k}^{-1} e_{\sigma(k)}^*\qquad\hbox{for all $k\in\Z_+$},\]
and hence
\[ \langle e_0^*, J^{-1}AJe_0\rangle=\langle e_{\sigma(0)}^*, A e_{\sigma(0)}\rangle.\]

So, we see that the set $\bigl\{  \langle e_0^*, J^{-1}AJe_0\rangle;\; J\in{\rm Iso}(\ell_p)\bigr\}$ is countable, and hence meager in $\K$. Since the non-zero linear functional $e_0^*:\ell_p\to \K$ is continuous and open, it follows that $\{ J^{-1}AJe_0;\; J\in{\rm Iso}(\ell_p)\}$ is meager in $\ell_p$.
\epf

\section{Operators which do not attain their norm}\label{NAsection} This section is  motivated by Corollary \ref{KK&meager} and also by the theory of \emph{norm-attaining} operators. 
We recall that an operator $T\in\bbx$ on a (real or complex) separable Banach space $X$ is said to be {norm-attaining} if $\mathcal N(T)\neq\emptyset$, \mbox{\it i.e.} there exists $z\neq 0$ such that $\Vert Tz\Vert =\Vert z\Vert$; equivalently, if there exists $x\in B_X$ such that $\Vert Tx\Vert=\Vert T\Vert$.

\smallskip  Let $\mathbf M:=\bbx$. Assuming that $X$ has the Kadec-Klee property, Corollary \ref{KK&meager} says that if one wants to prove that $(\bbx,\wot)$ and $(\bbx,\sot)$ have the same comeager sets, it is enough to show that there is an \sot$\,$-$\,$dense set of $T\in\bbx$ having many norming vectors. So it may look natural to ask whether a \emph{typical} $T\in\bigl(\bbx,\sot\bigr)$ has many norming vectors. 

Note that if one considers the operator norm topology rather than the topology \sot, then it is well known that the answer is negative. Indeed, it follows from a classical result of Bourgain \cite{B} that if $X$ is a Banach space with the Radon-Nikod\'ym property, then a typical $T\in(\bbx,\Vert\,\cdot\,\Vert)$ is \emph{absolutely strongly exposing}, which means the following: there exists $x\in S_X$ such that, whenever $(x_n)\subseteq B_X$ is a maximising sequence for  $T$, \mbox{\it i.e.} $\Vert Tx_n\Vert\to \Vert T\Vert$, one can find a sequence $(\omega_n)\subseteq\T$ such that $\omega_n x_n\to x$. This implies in particular that $T$ is norm-attaining and attains its norm at a unique point of $S_X$ (up to rotation). We refer to \cite{JMR} and the references therein for much more on this topic.

\smallskip Somewhat surprisingly, our next result will show that for $X=\ell_p$, $1<p<\infty$, and \emph{with respect to the topology} \sote, the typical behaviour is in fact to have no norming vector at all. This, of course, does not exclude the possibility of finding an \sot$\,$-$\,$dense (or even \sote-$\,$dense) set of operators $T$ with many norming vectors. 

\smallskip Before stating the result, let us introduce some terminology. 

\smallskip The Banach space $X$ is said to be \emph{smooth} if its norm is Gateaux differentiable outside the origin; equivalently, if for  every $x\in X\setminus\{ 0\}$, there is only one linear functional $x^*\in X^*$ such that $\Vert x^*\Vert=1$ and $\langle x^*,x\rangle=\Vert x\Vert$.

\smallskip
By a \emph{duality mapping} for $X$ we mean any map $\mathbf J:X\to X^*$ with the following properties:
\be
\item[-] $\mathbf J(0)=0$ and $\mathbf J(x)\neq 0$ for all $x\neq 0$,
\item[-] 
$\langle \mathbf J(x),x\rangle=\Vert \mathbf J(x)\Vert \Vert x\Vert$ for all $x\in X$,
\item[-] $\Vert \mathbf J(x)\Vert$ depends only on $\Vert x\Vert$ and is a non-decreasing function of $\Vert x\Vert$.
\ee

\noindent
Note that if $X$ is smooth, then there is a ``canonical'' duality mapping $\mathbf J_0:X\to X^*$; namely, $\mathbf J_0(0)=0$ and $\mathbf J_0(x)$ is the Gateaux differential of the norm at $x$ if $x\neq 0$, \mbox{\it i.e.} the unique linear functional such that $\Vert \mathbf J_0(x)\Vert =1$ and $\langle \mathbf J_0(x),x\rangle=\Vert x\Vert$. In this case, any duality mapping has the form $\mathbf J(x)=c(\Vert x\Vert) \mathbf J_0(x)$ for some non-decreasing function $c:[0,\infty)\to[0,\infty)$.

\smallskip 
We will say that a projection operator $P\in\mathcal B(X)$ is \emph{$M$-$\,$like} if for every $T\in\mathcal B(X)$, it holds that \[ \Vert PTP+(I-P)T(I-P)\Vert=\max\bigl( \Vert PTP\Vert, \Vert (I-P)T(I-P)\Vert\bigr).\] 
(Note that this forces $\Vert P\Vert =1=\Vert I-P\Vert$.) For example, if $X=\ell_p$ or $c_0$ and $\Lambda\subseteq\Z_+$, the canonical projection $P_\Lambda$ onto $\overline{\rm span}\,\{ e_j;\; j\in \Lambda\}$ is $M$-$\,$like. More generally, any $L_p\,$-$\,$projection, $1\leq p\leq\infty$ is $M$-$\,$like. The terminology is justified by the following fact: denoting by $\mathcal D_P(X)$ the subspace of $\mathcal B(X)$ consisting of all operators which are block-diagonal with respect to the decomposition $X= {\rm ran}(P)\oplus \ker(P)$, the projection $P$ is $M$-$\,$like if and only if the canonical projection $\pi : A\oplus B\mapsto A$ is an  $M$-$\,$projection on $\mathcal D_P(X)$ in the usual sense, \mbox{\it i.e.} $\Vert T\Vert=\max\bigl( \Vert \pi (T)\Vert, \Vert (I-\pi)(T)\Vert\bigr)$ for every $T\in\mathcal D_P(X)$. 

\bth\label{notNA} Let $X$ be a reflexive {\rm (}separable{\rm )} Banach space. Assume that $X$ is smooth, that there exists a duality mapping $\mathbf J:X\to X^*$ which is $(w,w^*)\,$-$\,$continuous on bounded sets, and that there exists a sequence $(P_N)_{N\geq 0}$ of finite rank $M$-$\,$like projections on $X$  such that $P_N\xrightarrow{\emph{\sote}} I$. Then a typical $T\in(\bbx,\emph{\sote})$ does not attain its norm.
\eth

From this result, we deduce
\bco\label{quelquechose} For any $1<p<\infty$, a typical $T\in\bigl(\mathcal B_1(\ell_p),\emph{\sote}\bigr)$ does not attain its norm.
\eco
\bpf We only have to check that there exists a duality mapping $\mathbf J:\ell_p\to \ell_q$ which is $(w,w^*)\,$-$\,$continuous on bounded sets. This is quite well known. For $x=(x_j)_{j\geq 0}\in\ell_p$, define $\mathbf J(x)$ as follows:
\[ \mathbf J(x)_j:= \frac{\vert x_j\vert^p}{x_j},\quad\ j\in\Z_+,\]
with the natural convention $\frac{\vert 0\vert^p}{0}=0$. Then $\mathbf J(x)\in \ell_q$ with $\Vert \mathbf J(x)\Vert_q=\Vert x\Vert_p^{p/q}=\Vert x\Vert_p^{p-1}$ and $\langle \mathbf J(x), x\rangle=\Vert x\Vert_p^p$, so $\mathbf J$ is a duality mapping. Moreover, $\mathbf J$ is $(w,w^*)\,$-$\,$continuous on bounded sets because it is bounded on bounded sets and $w^*$-$\,$convergence is equivalent to coordinate-wise convergence on bounded subsets of $\ell_q$.
\epf

\begin{remark} On can replace the space $\ell_p$ in the statement of Corollary \ref{quelquechose} by any Banach space $X$ which is an $\ell_p\,$-$\,$direct sum of a sequence  $(E_j)_{j\geq 0}$ of smooth finite-dimensional spaces. The duality mapping $\mathbf J:X\to X^*$ with the required continuity property is defined as follows: if $x=(x_j)_{j\geq 0}\in X$, then $\mathbf J(x)=(\mathbf J_j(x_j))_{j\geq 0}$, where $\mathbf J_j(x_j)\in E_j^*$ is the unique linear functional satisfying $\langle \mathbf J_j(x_j),x_j\rangle=\Vert x_j\Vert^p$ and $\Vert \mathbf J_j(x_j)\Vert=\Vert x_j\Vert^{p-1}$.  
\end{remark}

\smallskip The proof of Theorem \ref{notNA} relies on the next two lemmas. In what follows, we denote by $\mathcal S(X)$ the unit sphere of $\mathcal B(X)$. Our first lemma is  well known.
\blm\label{S} If $X$ is any infinite-dimensional Banach space, then $\mathcal S(X)$ is \emph{\sot}$\,$-$\,G_\delta$ and \emph{\sote}-$\,$dense in $\bbx$. 
\elm
\bpf It is clear that $\mathcal S(X)$ is \sot$\,$-$\,G_\delta$ since if $T\in\bbx$, then \[ T\in\mathcal S(X)\iff\forall n\in\N\;\exists x\in B_X\;:\;\Vert Tx\Vert >1-\frac1n\cdot\]

\smallskip In order to prove that $\mathcal S(X)$ is {\sote}-$\,$dense in $\bbx$, it is enough to show the following: for any $A\in\bbx$  and for any $x_1,\dots ,x_N\in X$ and $x_1^*,\dots, x_N^*\in X^*$, one can find $T\in\mathcal S(X)$ such that $Tx_i=Ax_i$ and $T^*x^*_i=A^*x_i^*$ for $i=1,\dots ,N$. Choose $x^*\in X^*\setminus\{ 0\}$ such that $\langle x^*, x_i\rangle=0$ for $i=1,\dots ,N$ and $x\in X\setminus\{ 0\}$ such that $\langle x^*_i, x\rangle=0$ for $i=1,\dots ,N$. Since $\Vert A\Vert\leq 1$, one can find $\varepsilon \geq 0$ such that $T:=A+\varepsilon\, x^*\otimes x$ satisfies $\Vert T\Vert =1$; and $T$ has the required properties.
\epf

Our second lemma is essentially a rephrasing of \cite[Theorem 2.2]{Debm2}.

\blm\label{J} Assume that $X$ is smooth, and let $\mathbf J:X\to X^*$ be any duality mapping. If $T\in\mathcal S(X)$ and $x\in X\setminus\{ 0\}$, then
\[ x\in\mathcal N(T)\iff T^*\bigl(\mathbf J(Tx)\bigr)=\mathbf J(x).\]
\elm
\bpf Assume that $x\in \mathcal N(T)$, \mbox{\it i.e.} $\Vert Tx\Vert = \Vert x\Vert$. Then 
\[ \langle T^*\bigl(\mathbf J(Tx)\bigr), x\rangle=\langle \mathbf J(Tx), Tx\rangle=\Vert \mathbf J(Tx)\Vert \Vert Tx\Vert=\Vert \mathbf J(Tx)\Vert  \Vert x\Vert.\]
Since $\Vert T^*\bigl(\mathbf J(Tx)\bigr)\Vert \leq \Vert \mathbf J(Tx)\Vert$, it follows that in fact $\Vert T^*\bigl(\mathbf J(Tx)\bigr)\Vert = \Vert \mathbf J(Tx)\Vert$ and that $\langle T^*\bigl(\mathbf J(Tx)\bigr), x\rangle=\Vert T^*\big(\mathbf J(Tx)\bigr)\Vert \Vert x\Vert$. So $T^*\big(\mathbf J(Tx)\bigr)$ is a non-negative multiple of $\mathbf J_0(x)$, where $\mathbf J_0$ is the canonical duality mapping of $X$, and hence $T^*\bigl(\mathbf J(Tx)\bigr)= c \mathbf J(x)$ for some $c\geq 0$. Finally, we have $c=\Vert T^*\bigl(\mathbf J(Tx)\bigr)\Vert /\Vert \mathbf J(x)\Vert= \Vert \mathbf J(Tx)\Vert/\Vert \mathbf J(x)\Vert$, hence $c=1$ since $\Vert \mathbf J(u)\Vert$ depends only on $\Vert u\Vert$ and $\Vert Tx\Vert=\Vert x\Vert$.

\smallskip Conversely, assume that $T^*\bigl(\mathbf J(Tx)\bigr)=\mathbf J(x)$. Then
\[ \Vert \mathbf J(x)\Vert \Vert x\Vert =\langle \mathbf J(x),x\rangle= \langle \mathbf J(Tx), Tx\rangle =\Vert \mathbf J(Tx)\Vert \Vert Tx\Vert;\]
and since $\Vert Tx\Vert\leq \Vert x\Vert$ and $\Vert \mathbf J(u)\Vert$ is a non-decreasing function of $\Vert u\Vert$, it follows that we must have $\Vert Tx\Vert =\Vert x\Vert$.
\epf

\begin{proof}[Proof of Theorem \ref{notNA}] Let us denote by ${\rm NA}(X)$ the set of all $T\in\mathcal B(X)$ which attain their norm. By Lemma \ref{S}, it is enough to show that ${\rm NA}(X)\cap \mathcal S(X)$ is meager in $(\mathcal S(X), \sote)$. We are going to show that ${\rm NA}(X)\cap \mathcal S(X)$ is \sote-$\,F_\sigma$ in $\mathcal S(X)$, and that $\mathcal S(X)\setminus{\rm NA}(X)$ is \sote-$\,$dense in $\mathcal S(X)$.

\medskip 
To show that ${\rm NA}(X)\cap \mathcal S(X)$ is \sote-$\,F_\sigma$ in $\mathcal S(X)$, we start with the following observation.
\begin{claim}\label{closedset} For any bounded set $B\subseteq X$, the map $(T,x)\mapsto T^*\bigl(\mathbf J(Tx)\bigr)$ is continuous from $(\bbx,\sote)\times (B,w)$ into $(X^*, w^*)$. 
\end{claim}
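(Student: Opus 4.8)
The plan is to factor the map $(T,x)\mapsto T^*\bigl(\mathbf J(Tx)\bigr)$ through three intermediate maps and check continuity of each piece with respect to the relevant topologies, using the hypotheses that $X$ is reflexive and that $\mathbf J$ is $(w,w^*)$-continuous on bounded sets. First I would fix a bound $\rho$ with $B\subseteq \rho B_X$; since every $T\in\bbx$ is a contraction, all the vectors $Tx$ (for $x\in B$) and hence all the functionals $\mathbf J(Tx)$ stay in a fixed bounded set, so throughout we only ever work on bounded subsets of $X$ and $X^*$, where $w$ and $w^*$ are metrizable (by reflexivity and separability). This lets me argue sequentially: it suffices to show that if $T_k\xrightarrow{\sote}T$ in $\bbx$ and $x_k\xrightarrow{w}x$ in $B$, then $T_k^*\bigl(\mathbf J(T_k x_k)\bigr)\xrightarrow{w^*}T^*\bigl(\mathbf J(Tx)\bigr)$.

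The key steps, in order: (1) $T_k x_k\xrightarrow{w}Tx$. Indeed $T_k x_k - Tx = T_k(x_k-x) + (T_k-T)x$; the second term tends to $0$ in norm (hence weakly) since $T_k\xrightarrow{\sot}T$, and for the first term, for any $x^*\in X^*$ we have $\langle x^*, T_k(x_k-x)\rangle = \langle T_k^* x^*, x_k-x\rangle$, and since $T_k^* x^*\xrightarrow{\|\cdot\|}T^* x^*$ (because $T_k\xrightarrow{\sote}T$ means $T_k^*\xrightarrow{\sot}T^*$) while $(x_k-x)$ is bounded and $x_k-x\xrightarrow{w}0$, a standard ``norm-convergent functional against weakly-convergent bounded sequence'' argument gives $\langle x^*, T_k(x_k-x)\rangle\to 0$. (2) By the $(w,w^*)$-continuity of $\mathbf J$ on bounded sets, $\mathbf J(T_k x_k)\xrightarrow{w^*}\mathbf J(Tx)$; note the $\mathbf J(T_k x_k)$ all lie in a fixed ball of $X^*$ because $\|\mathbf J(u)\|$ depends only on $\|u\|$ and $\|T_k x_k\|\le \rho$. (3) Finally $T_k^*\bigl(\mathbf J(T_k x_k)\bigr)\xrightarrow{w^*}T^*\bigl(\mathbf J(Tx)\bigr)$: writing $y_k^*:=\mathbf J(T_k x_k)$ and $y^*:=\mathbf J(Tx)$, for any $z\in X$ we have $\langle T_k^* y_k^* - T^* y^*, z\rangle = \langle y_k^* - y^*, T_k z\rangle + \langle y^*, (T_k-T)z\rangle$; the second term goes to $0$ since $T_k\xrightarrow{\sot}T$, and for the first term we again have a bounded sequence $(y_k^*-y^*)$ converging $w^*$ to $0$ paired against $T_k z\xrightarrow{\|\cdot\|}Tz$, so the pairing tends to $0$ by the same principle as in step (1).

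The only mild subtlety — the ``main obstacle'' such as it is — is step (1)/(3), namely handling the bilinear-type expressions $\langle T_k^* x^*, x_k - x\rangle$ and $\langle y_k^*-y^*, T_k z\rangle$ where one factor converges in norm and the other is a bounded weakly/weak-$*$ null sequence: one must be slightly careful because neither factor is fixed. This is resolved cleanly by the elementary lemma that if $a_k\to a$ in norm in a Banach space $Y$ and $b_k^*\xrightarrow{w^*}0$ in $Y^*$ with $(b_k^*)$ bounded, then $\langle b_k^*, a_k\rangle\to 0$ (split as $\langle b_k^*, a_k - a\rangle + \langle b_k^*, a\rangle$, bound the first by $\sup_k\|b_k^*\|\cdot\|a_k-a\|$ and let the second tend to $0$ by $w^*$-convergence). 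With metrizability in hand from reflexivity and separability, the sequential argument suffices to conclude joint continuity, and the claim follows.
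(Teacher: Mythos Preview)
Your proof is correct and follows essentially the same route as the paper's: argue sequentially (justified by metrizability), show $T_k x_k\xrightarrow{w}Tx$ via the pairing $\langle T_k^*x^*,x_k\rangle$ using $T_k^*\xrightarrow{\sot}T^*$, apply the $(w,w^*)$-continuity of $\mathbf J$ on bounded sets, and then repeat the same ``norm-convergent paired with bounded $w^*$-null'' trick to pass through $T_k^*$. The paper is terser---it computes $\langle x^*,T_nx_n\rangle=\langle T_n^*x^*,x_n\rangle$ directly rather than splitting, and dispatches the last step with ``as above''---but the substance is identical.
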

\begin{proof}[Proof of Claim \ref{closedset}] Let $(T_n,x_n)$ be a sequence of elements of $\bbx\times B$ such that $T_n\xrightarrow{\sote} T\in\bbx$ and $x_n\xrightarrow{w} x\in B$. Then  $T_nx_n\xrightarrow{w}Tx$: indeed, if $x^*\in X^*$ then $\langle x^*, T_nx_n\rangle=\langle T_n^*x^*,x_n\rangle\to \langle T^*x^*, x\rangle=\langle x^*,Tx\rangle$ because $x_n\xrightarrow{w}x$ and $T^*_nx^*\xrightarrow{\Vert\,\cdot\,\Vert}T^*x^*$. Since $\mathbf J$ is $(w,w^*)\,$-$\,$continuous on bounded sets, it follows that $\mathbf J(T_nx_n)\xrightarrow{w^*}\mathbf J(Tx)$; and since $T_nz\xrightarrow{\Vert\,\cdot\,\Vert}Tz$ for all $z\in X$, this implies (as above) that $T^*_n\bigl( \mathbf J(T_nx_n)\bigr)\xrightarrow{w^*} T^*\bigl( \mathbf J(Tx)\bigr)$.
\end{proof}

\smallskip
Let $(B_q)_{q\in\N}$ be a family of closed balls in $X$ such that $\bigcup_{q\in\N} B_q=X\setminus\{ 0\}$. By Lemma \ref{J}, if $T\in\mathcal S(X)$ then 
\[ T\in {\rm NA}(X)\iff \exists q\in\N\;\exists x\in B_q\;:\; T^*\bigl(\mathbf J(Tx)\bigr)=\mathbf J(x).
\]
By Claim \ref{closedset} and since $\mathbf J$ is $(w,w^*)\,$-$\,$continuous on bounded sets, we see that for each $q\in\N$, the set $\{ (T, x)\in\mathcal S(X);\; T^*\bigl(\mathbf J(Tx)\bigr)=\mathbf J(x)\}$ is closed in $(\mathcal S(X),\sote)\times (B_q,w)$; and since $B_q$ is weakly compact, it follows that the set $\{ T\in\mathcal S(X);\; \exists x\in B_q\;:\; T^*\bigl(\mathbf J(Tx)\bigr)=\mathbf J(x)\}$ is closed in $(\mathcal S(X),\sote)$. Hence, ${\rm NA}(X)\cap\mathcal S(X)$ is $F_\sigma$ in $(\mathcal S(X),\sote)$.

\medskip Let us now show that $\mathcal S(X)\setminus{\rm NA}(X)$ is \sote-$\,$dense in $\mathcal S(X)$. 

First, note that for any $N\geq 0$, the space   $F_N:={\rm ran}(I-P_N)$ is reflexive, infinite-dimensional, and has the Approximation Property (even the Metric Approximation Property) since the sequence of finite rank operators $\bigl( (I-P_N) (P_k)_{| F_N}\bigr)_{k\geq 0}$ converges \sot\ to $I_{F_N}$. By a classical result of Holub \cite{Hol}, it follows that there is an operator $B_N\in\mathcal B(F_N)$ which does not attain its norm; and of course we may assume that $\Vert B_N\Vert=1$.

Now, let $A\in\mathcal S(X)$ be arbitrary. Let $(\varepsilon_N)_{N\geq 0}$ be any sequence of positive numbers tending to $0$. For each $N\geq 0$, considering $B_N$ as an operator from $F_N$ to $X$, define \[ T_N:=\left(1-\varepsilon_N\right) P_NAP_N+ (I-P_N)B_N(I-P_N)\in \mathcal B(X).\]

Since $P_N$ is $M$-$\,$like, we have $\Vert T_N\Vert=1$, \mbox{\it i.e.} $T_N\in\mathcal S(X)$, and $T_N\xrightarrow{\sote} A$ because $P_N\xrightarrow{\sote} I$ and the sequence $(B_N)$ is bounded. So it is enough to show that $T_N\not\in {\rm NA}(X)$. For this, we will use the following fact.
\begin{fact}\label{strict} Let $P\in\mathcal B(X)$ be a $M$-$\,$like projection. If $u, v\in X$ are such that $\Vert Pu\Vert<\Vert Pv\Vert$ and $\Vert (I-P)u\Vert< \Vert (I-P)v\Vert$, then $\Vert u\Vert <\Vert v\Vert$.
\end{fact}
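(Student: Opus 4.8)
The plan is to use the $M$-like hypothesis not via functionals on $X$, but by feeding one cleverly chosen block-diagonal operator into the defining identity. Write $a:=Pu$, $b:=(I-P)u$, $a':=Pv$, $b':=(I-P)v$, so that $u=a+b$ and $v=a'+b'$. Recall that $M$-likeness forces $\Vert P\Vert=\Vert I-P\Vert=1$, so the hypotheses give $\Vert a'\Vert=\Vert Pv\Vert>\Vert Pu\Vert=\Vert a\Vert\geq 0$ and likewise $\Vert b'\Vert>\Vert b\Vert\geq 0$; in particular $a'\neq 0$, $b'\neq 0$, and $\Vert v\Vert\geq\Vert Pv\Vert=\Vert a'\Vert>0$. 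The goal is then to produce a single operator $D\in\mathcal B(X)$, block-diagonal with respect to the decomposition $X={\rm ran}(P)\oplus\ker(P)$, with $\Vert D\Vert<1$ and $Dv=u$; once this is done the conclusion is immediate: $\Vert u\Vert=\Vert Dv\Vert\leq\Vert D\Vert\,\Vert v\Vert<\Vert v\Vert$.

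To build $D$, I would work on the two summands separately. On ${\rm ran}(P)$, use Hahn--Banach to pick a norm-one functional $\phi$ with $\langle\phi,a'\rangle=\Vert a'\Vert$ and set $A:=\Vert a'\Vert^{-1}\,\phi\otimes a$, a rank-$\leq 1$ operator of ${\rm ran}(P)$ into itself with $Aa'=a$ and $\Vert A\Vert=\Vert a\Vert/\Vert a'\Vert<1$; symmetrically, on $\ker(P)={\rm ran}(I-P)$ define $B$ with $Bb'=b$ and $\Vert B\Vert=\Vert b\Vert/\Vert b'\Vert<1$. Put $Dx:=A(Px)+B\bigl((I-P)x\bigr)$. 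A one-line check gives $PDP=A\circ P$ and $(I-P)D(I-P)=B\circ(I-P)$, hence $PDP+(I-P)D(I-P)=D$ with $\Vert PDP\Vert=\Vert A\Vert$ and $\Vert(I-P)D(I-P)\Vert=\Vert B\Vert$. Since $P$ is $M$-like, this yields $\Vert D\Vert=\max\bigl(\Vert A\Vert,\Vert B\Vert\bigr)<1$, while $Dv=Aa'+Bb'=a+b=u$, as required.

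There is essentially a single idea here, so the ``main obstacle'' is one of recognition rather than of technique: one should resist trying to estimate $\Vert u\Vert$ directly through a norming functional and the decomposition $f=P^*f+(I-P)^*f$, because $P^*$ need not be an ``$L$-like'' projection on $X^*$ (for $X=\ell_p$ and $P=P_\Lambda$ it plainly is not), and instead transport $v$ to $u$ by a block-diagonal contraction and invoke the $M$-like identity for that one operator. The $M$-likeness is used exactly once, precisely to pass from $\max(\Vert A\Vert,\Vert B\Vert)$ being the naive ``diagonal'' bound to its being the actual norm $\Vert D\Vert$ --- a step which would fail in general without the hypothesis. Note that no reflexivity, smoothness or separability of $X$ is needed for this fact.
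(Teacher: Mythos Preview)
Your proof is correct and follows essentially the same approach as the paper's own proof: both construct rank-one operators $A,B$ on the two summands sending $Pv\mapsto Pu$ and $(I-P)v\mapsto(I-P)u$ with norms $<1$, form the block-diagonal operator $D=PDP+(I-P)D(I-P)$, and invoke the $M$-like identity to get $\Vert D\Vert<1$ and hence $\Vert u\Vert=\Vert Dv\Vert<\Vert v\Vert$. The only cosmetic difference is that the paper defines $A,B$ on all of $X$ and then compresses, while you define them on the summands and extend; your handling of the degenerate cases $Pu=0$ or $(I-P)u=0$ (where $A$ or $B$ is simply zero) is in fact slightly cleaner.
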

\begin{proof}[Proof of Fact \ref{strict}] We may assume that $Pu\neq 0$ and $(I-P)u\neq 0$. Since $\Vert Pu\Vert <\Vert Pv\Vert$, one can find $x^*\in X^*$ such that $\Vert x^*\Vert<1$ and $\langle x^*, Pv\rangle =\Vert Pu\Vert$. Then the operator 
$A:= x^*\otimes \frac{Pu}{\Vert Pu\Vert}$ satisfies $\Vert A\Vert <1$ and $APv=Pu$. Similarly, one can find $B\in\mathcal B(X)$ such that $\Vert B\Vert <1$ and $B(I-P)v=(I-P)u$. Then $T:= PAP+(I-P)B(I-P)$ satisfies $\Vert T\Vert <1$ because $P$ is $M$-$\,$like, and $Tv=u$. Hence $\Vert u\Vert <\Vert v\Vert$.
\epf

Going back to $T_N$, let us show that $T_N\not\in{\rm NA}(X)$. Let $x\in X\setminus\{ 0\}$: we have to show that $\Vert Tx\Vert <\Vert x\Vert$. If $P_Nx=0$ then $x\in F_N$ and $T_Nx=B_Nx$, so $\Vert T_Nx\Vert <\Vert x\Vert$ because $B_N$ does not attain its norm. If $(I-P_N)x=0$, then $T_Nx=\left(1-\varepsilon_N\right) Ax$, so we have  $\Vert T_Nx\Vert \leq \left(1-\varepsilon_N\right)\Vert x\Vert<\Vert x\Vert$. Finally, assume that $P_Nx\neq 0$ and $(I-P_N)x\neq 0$. Then $\Vert P_NT_Nx\Vert =\Vert\left(1-\varepsilon_N\right)P_NAP_Nx\Vert <\Vert P_Nx\Vert$, and $\Vert (I-P_N)T_Nx\Vert =\Vert (I-P_N)B_N(I-P_N)x\Vert<\Vert (I-P_N)x\Vert$ because $B_N\not\in{\rm NA}(F_N)$.  Hence $\Vert T_Nx\Vert <\Vert x\Vert$ by Fact \ref{strict}.
\epf


\begin{remark} Let $X=\ell_p$, $1<p<\infty$. The proof of Theorem \ref{notNA} has shown that ${\rm NA}(X)\cap \mathcal S(X)$ is $F_\sigma$ in $(\mathcal S(X),\sote)$, hence in $(\mathcal S(X),\Vert\,\cdot\,\Vert)$. On the other hand, by Bourgain's result \cite{B} mentioned above, ${\rm NA}(X)\cap \mathcal S(X)$ is comeager in $(\mathcal S(X), \Vert\,\cdot\,\Vert)$. So, we obtain that $\mathcal S(X)\setminus {\rm NA}(X)$ is in fact \emph{nowhere dense} in $(\mathcal S(X),\Vert\,\cdot\,\Vert)$. In other words, ${\rm NA}(\ell_p)\cap \mathcal S(\ell_p)$ contains a norm-dense open subset of $\mathcal S(\ell_p)$; from which it follows that ${\rm NA}(\ell_p)$ contains a norm-dense open subset of $\mathcal B(\ell_p)$.
\end{remark}

\smallskip It follows from Theorems \ref{notNA} and \ref{samemeager} that if $2<p<\infty$, a typical $T\in\bigl(\mathcal B_1(\ell_p), \sot\bigr)$ does not attain its norm. This leads to the following question:
\begin{question} Let $1<p<2$. Is it true that a typical $T\in\bigl(\mathcal B_1(\ell_p), \sot\bigr)$ does not attain its norm?
\end{question}

\section{Additional results}\label{FredhSection}

\subsection{More on similarity of topologies} Let $\mathbf M$ be an abstract set. We have observed that two topologies on $\mathbf M$ sharing the same comeager sets may not be similar.  In the next proposition, we show that if some extra assumptions on the topologies are added, then the two properties become equivalent. 
 
 \bpr\label{similarcomeagerbis} Let $\tau$ and $\tau'$ be two topologies on $\mathbf M$. Assume that $\tau\subseteq \tau'$, that $\tau'$ is Baire and second-countable, and that the identity map $\mathbf i_{\tau, \tau'}:(\mathbf M,\tau)\to (\mathbf M,\tau')$ is Borel $1$, \textit{i.e.} any $\tau'$-$\,$open subset of $\mathbf M$ is $\tau\,$-$\,F_\sigma$. Then $\tau$ and $\tau'$ are similar if and only if they have the same comeager sets.
 \epr
 \bpf By Lemma \ref{similarcomeager}, we only need to prove the ``if'' part. Assume that $\tau$ and $\tau'$ have the same comeager sets. Since $\mathbf i_{\tau,\tau'}$ is Borel $1$ and $\tau'$ is second-countable, we know that $\mathcal C(\tau,\tau')$ is $\tau\,$-$\,$comeager. So $\mathcal C(\tau,\tau')$ is $\tau'\,$-$\,$comeager by our assumption, and hence $\tau'\,$-$\,$dense in $\mathbf M$ since 
 $\tau'$ is Baire. By Lemma \ref{tropsimple}, it follows that $\tau$ and $\tau'$ are similar. (Note that it is not really necessary to assume that $\tau\subseteq \tau'$: it is enough to know that $\mathcal C(\tau',\tau)$ is $\tau\,$-$\,$dense.)
 \epf
 
  One gets the same conclusion with the following ``symmetric'' assumptions: $\tau$ and $\tau'$ are both Baire and second-countable, and the two identity maps are Borel $1$.

\smallskip We now prove a result that allows to give a few other characterisations of similarity.
\bpr\label{symmetric} Let $\tau$ and $\tau'$ be similar topologies on $\mathbf M$, and assume that $\tau$ and $\tau'$ are second-countable. Then $\mathcal C(\tau,\tau')\cap \mathcal C(\tau',\tau)$ is comeager {\rm (}for both $\tau$ and $\tau'${\rm )}.
\epr
\bpf It is enough to show that $\mathcal A:=\mathcal C(\tau,\tau')\cap \mathcal C(\tau',\tau)$ is $\tau\,$-$\,$comeager; and to this end we use the \emph{Banach-Mazur game} $\mathbf G(\mathcal A)$ associated with $\mathcal A$  (see \mbox{e.g.} \cite[p.\, 51]{Ke}), in the topological space $(\mathbf M,\tau)$. Recall that $\mathbf G(\mathcal A)$ is an infinite game with two players, denoted by I and II. The two players play alternatively non-empty $\tau\,$-$\,$open sets $ U_0\supseteq  U_1\supseteq  U_2\supseteq \cdots$, and player II wins the run if $\bigcap_{n\geq 0}  U_n\subseteq \mathcal  A$. To prove that $\mathcal A$ is $\tau\,$-$\,$comeager, it is enough to show that player II has a winning strategy in $\mathbf G(\mathcal A)$.

Let $(O_k)_{k\geq0}$ and $(O'_k)_{k\geq 0}$ be countable bases for $(\mathbf M,\tau)$ and $(\mathbf M,\tau')$. Assume that player I has just played a $\tau\,$-$\,$open set $U_{2k}\neq \emptyset$. Then II answers as follows. First, II chooses a $\tau'\,$-$\,$open set $W'_k\neq\emptyset$ such that $W'_{k}\subseteq U_{2k}$ and, moreover, $W'_{k}\subseteq U_{2k}\cap O_k$ if $U_{2k}\cap O_k\neq \emptyset$. This is possible since $\tau$ and $\tau'$ are similar. Then II plays a $\tau\,$-$\,$open set $U_{2k+1}\neq \emptyset$ such that $U_{2k+1}\subseteq W'_k$ and, moreover, $U_{2k+1}\subseteq W'_k\cap O'_k$ if $W'_k\cap O'_k\neq\emptyset$. Again, this is possible since $\tau$ and $\tau'$ are similar. Let us check that this strategy is winning for II.

Let $(U_n)_{n\geq 0}$ be a run of the game $\mathbf G(\mathcal A)$ where II has followed the above strategy, and let $x\in \bigcap_{n\geq 0} U_n$. We have to show that $x\in \mathcal A=\mathcal C(\tau,\tau')\cap\mathcal C(\tau',\tau)$ \mbox{\it i.e.} that any $\tau\,$-$\,$neighbourhood of $x$ is a $\tau'\,$-$\,$neighbourhood and vice-versa. Let $V$ be a $\tau\,$-$\,$neighbourhood of $x$. Choose $k\geq 0$ such that $x\in O_k\subseteq V$.  Then $x\in O_k\cap U_{2k}$, so that in particular $O_k\cap U_{2k}\neq\emptyset$. With the above notation, it follows that $W'_k\subseteq O_k\cap U_{2k}$ and hence $W'_k\subseteq V$. Since $x\in U_{2k+1}\subseteq W'_k$, this shows that $V$ is a 
$\tau'\,$-$\,$neighbourhood of $x$. Now, let $V'$ be a $\tau'\,$-$\,$neighbourhood of $x$. Choose $k\geq 0$ such that $x\in O'_k\subseteq V'$. Then $x\in U_{2k+1}\cap O'_k\subseteq W'_k\cap O'_k$, so $W'_k\cap O'_k\neq\emptyset$. Hence $U_{2k+1}\subseteq W'_k\cap O'_k\subseteq V'$; and since $x\in U_{2k+1}$, it follows that $V'$ is a $\tau\,$-$\,$neighbourhood of $x$.  
\epf

\bco Assume that $\tau$ and $\tau'$ are Baire and second-countable. Then the following assertions are equivalent. 
\be
\item[{\rm (i)}] $\tau$ and $\tau'$ are similar.
\item[\rm (ii)] $\mathcal C(\tau,\tau')\cap \mathcal C(\tau,\tau')$ is comeager for both $\tau$ and $\tau'$.
\item[\rm (iii)] $\mathcal C(\tau,\tau')$ and $\mathcal C(\tau,\tau')$ are both $\tau\,$-$\,$dense and $\tau'\,$-$\,$dense.
\item[\rm (iv)] $\mathcal C(\tau,\tau')$ is $\tau'\,$-$\,$dense and $\mathcal C(\tau,\tau')$ is $\tau\,$-$\,$dense.
\ee
\eco
\bpf This follows from Proposition \ref{symmetric} and Lemma \ref{tropsimple}.
\epf

In order to state our second corollary, we need to recall a perhaps not so well known notion. A subset $A$ of a topological space $E$ is said to be \emph{semi-open} if $A$ is contained in the closure of its interior; equivalently, if there exists an open set $O$ such that $O\subseteq A\subseteq\overline{O\,}$. This notion was introduced in \cite{Levine}, and independently in \cite{Norway} (where semi-open sets are called \emph{$\beta\,$-$\,$sets}). The somewhat dual property of being contained in the interior of its closure defines the class of \emph{locally dense} sets introduced in \cite{CM}. There is actually a rather abundant literature concerning various classes of ``generalised open sets''; see \mbox{e.g.} \cite{Csa}.
 \bco\label{equivalence} Assume that $\tau\subseteq\tau'$, that $\tau$ is second-countable, and that $\tau'$ is Baire and second-countable. 
 Then the following assertions are equivalent. 
 \be
 \item[\rm (a)] $\tau$ and $\tau'$ are similar;
 \item[\rm (b)] $\mathcal C(\tau,\tau')$ is $\tau'$-$\,$dense in $\mathbf M$;
  \item[\rm (c)] every $\tau'\,$-$\,$semi-open set is $\tau\,$-$\,$semi-open;
 \item[\rm (d)] every $\tau'\,$-$\,$open set is $\tau\,$-$\,$semi-open.
 \ee
 \eco
 \bpf 
 By Proposition \ref{symmetric} and since $\tau'$ is Baire, we know that (a)$\implies$(b).

 (b)$\implies$(c) Assume that $\mathcal C(\tau,\tau')$ is $\tau'$-$\,$dense in $\mathbf M$. Let $A\subseteq \mathbf M$ be $\tau'\,$-$\,$semi-open, and choose a $\tau'\,$-$\,$open set $O'$ such that $O'\subseteq A\subseteq \overline{O'}^{\,\tau'}$. Denote by $O$ the $\tau\,$-$\,$interior of $O'$, so that $O\subseteq O'\subseteq A$. Since $\mathcal C(\tau,\tau')$ is $\tau'$-$\,$dense in $\mathbf M$ and $O'$ is $\tau'\,$-$\,$open, $\mathcal C(\tau,\tau')\cap O'$ is $\tau'\,$-$\,$ dense in $O'$; and since $\tau\subseteq \tau'$, it follows that $ \overline{O'}^{\,\tau'}\subseteq \overline{\mathcal C(\tau,\tau')\cap O'}^{\,\tau}$ Moreover, $\mathcal C(\tau,\tau')\cap O'$ is contained in $O$ by the definition of $\mathcal C(\tau,\tau')$. So we see that $O\subseteq A\subseteq \overline{O}^{\,\tau}$, and hence $A$ is $\tau\,$-$\,$semi-open.
 
 (c)$\implies$(d) is obvious.
 
 (d)$\implies$(a) Assume (d). Since $\tau\subseteq \tau'$, it is enough to show that any set $A\subseteq \mathbf M$ with non-empty $\tau'\,$-$\,$interior has non-empty $\tau\,$-$\,$interior; which is obvious by (d).
 \epf
 
 \begin{remark} The implications (b)$\implies$(c)$\implies$(d)$\implies$(a) do not require $\tau$ and $\tau'$ to be second-countable or Baire. Moreover, without assuming that $\tau\subseteq\tau'$, the proof of (b)$\implies$(c) shows that if (b) holds true then, for any $\tau'\,$-$\,$semi-open set $A\subseteq \mathbf M$, one can find a $\tau\,$-$\,$open set $O$ such that $O\subseteq A\subseteq \overline{O}^{\,\tau'}$; and the proof of (d)$\implies$(a) shows that if (d) holds true then any $\tau\,$-$\,$dense set is $\tau'\,$-$\,$dense.\end{remark}
 
 \begin{remark} One cannot replace  condition (c) in Corollary \ref{equivalence} above by the condition ``$\tau$ and $\tau'$ have the same semi-open sets''. In fact, having the same semi-open sets should be considered as a much stronger property than similarity. For example, it follows from \cite[Propositions 1 and 8]{Norway}  that if $\tau$ and $\tau'$ are two \emph{regular} topologies with the same semi-open sets, then $\tau=\tau'$. (Recall that a topology is regular if every point of the space has a neighbourhood basis consisting of closed sets.)
 \end{remark}

\subsection{More on points of $\wot\,$-$\,\sot$ continuity} In this sub-section, we add some examples of situations where one can conclude that an operator $T\in\bbx$ belongs to $\mathcal C(\wot,\sot)$. 

\smallskip Our first example concerns isometries. Let us say that a projection operator $P\in\mathcal B(X)$ is \emph{uniformly strictly contractive} if $\Vert P\Vert=1$ and, whenever $(x_n)$ is a sequence in $B_X$ such that $\Vert Px_n\Vert\to 1$, it follows that $\Vert (I-P)x_n\Vert\to 0$. For example, any $L_p\,$-$\,$projection, $1\leq p<\infty$ is uniformly strictly contractive. (A projection $P\in\mathcal B(X)$ is an $L_p\,$-$\,$projection if $X={\rm ran}(P)\oplus_{\ell_p} \ker(P)$; see \cite{B&al}.)

\begin{example} Let $X$ be reflexive. If $T\in\bbx$ admits a left inverse $L\in\bbx$ such that the projection $P:=TL$ is uniformly strictly contractive, then $T\in \mathcal C({\wot},{\sot})$. In particular:
\be
\item[\rm (i)] any surjective isometry of $X$ belongs to $\mathcal C({\wot},{\sote})$;
\item[\rm (ii)] if $E$ is any reflexive space and $1<p<\infty$, the forward shift $S$ on $X:=\ell_p(\Z_+,E)$ belongs to $\mathcal C({\wot},{\sot})$.
\ee
\end{example}
\bpf  Note that the assumption made on $T$ implies that $T$ is an isometry since $\Vert T\Vert\leq 1$ and $\Vert L\Vert\leq 1$.

Since $X$ is reflexive and separable, the set $\mathcal C(w,\Vert\,\cdot\,\Vert)$ is weakly dense in $B_X$; in particular, we have $\overline{\rm span}\; \mathcal C(w,\Vert\,\cdot\,\Vert)=X$. Hence, by Proposition \ref{abstractnonsense}, it is enough to show that $T\bigl( \mathcal C(w,\Vert\,\cdot\,\Vert)\bigr)\subseteq \mathcal C(w,\Vert\,\cdot\,\Vert)$. 

Let $z\in \mathcal C(w,\Vert\,\cdot\,\Vert)$, and let $(x_n)$ be a sequence in $B_X$ such that $x_n\xrightarrow{w}Tz$. Since $LTz=z$ and $z\in\mathcal C(w,\Vert\,\cdot\,\Vert)$, we see that  $Lx_n\xrightarrow{\Vert \,\cdot\,\Vert} z$. Hence $\Vert TL x_n\Vert\to \Vert Tz\Vert=1$. Since $P=TL$ is uniformly strictly contractive, it follows that $\Vert (I-TL)x_n\Vert\to 0$; and since $TLx_n\xrightarrow{\Vert\,\cdot\,\Vert} Tz$, we conclude that $x_n\xrightarrow{\Vert\,\cdot\,\Vert} Tz$, which shows that $Tz\in\mathcal C(w,\Vert\,\cdot\,\Vert)$. 

\smallskip Since the identity operator is a uniformly strictly contractive projection, we see that any surjective isometry belongs to $\mathcal C({\wot},{\sot})$, and (i) follows by duality. As to (ii), take $L:=B$, the backward shift on $X=\ell_p(\Z_+,E)$. 
\epf

\smallskip Our second example follows from Proposition \ref{KK3}. Recall that a point $x\in S_X$ is said to be a \emph{strongly exposed point of $B_X$} if there exists a linear functional $x^*\in S_{X^*}$ such that $\langle x^*,x\rangle=1$ and $x^*_{| B_X}$ has a strong maximum at $x$, \mbox{\it i.e.} every maximising sequence $(x_n)\subseteq B_X$ for $x^*$ converges in norm to $x$.

\begin{example}\label{extreme} Let $X$ be reflexive with the Kadec-Klee property. If $T\in\bbx$ is such that $\Vert T\Vert=1$ and the weak closure of $\mathcal N(T)$ contains the strongly exposed points of $B_X$, then $T\in\mathcal C({\wot},{\sot})$. 
\end{example}
\bpf Since $X$ is reflexive, $B_X$ is the closed convex hull of its strongly exposed points, by \cite[Theorem 4]{Lind}. So the result is clear by Proposition \ref{KK3}.
\epf

\begin{remark*} It is quite possible that $\mathcal N(T)$ contains all extreme points of $B_X$ and yet $T$ is not an isometry. Here is a simple example. Let $1<p<\infty$ and let $X:=X_1\oplus_{\ell_p} X_2$, where $X_1:= (\K^2,\Vert \,\cdot\,\Vert_1)$ and $X_2$ is any reflexive Banach space with the Kadec-Klee property. Then $X$ is reflexive with the Kadec-Klee property, and the extreme points of $B_X$ are the points $x=x_1\oplus x_2$, where $\Vert x_1\Vert^p+\Vert x_2\Vert^p=1$ and $x_j\in {\rm Ext}\bigl(\Vert x_j\Vert B_{X_j}\bigr)$. Let $T:=A\oplus I_{X_2}\in\bbx$, where $A\in \mathcal B_1(X_1)$ is any non-isometric operator such that $\Vert Ae\Vert=\Vert Ae'\Vert=1$, where $(e,e')$ is the canonical basis of $X_1$. Then $T$ is not an isometry but $\mathcal N(T)\supseteq {\rm Ext}(B_X)$.
\end{remark*}

We now prove a variant of Proposition \ref{KK3} involving only ``{approximately norming}'' vectors. Recall that the Banach space $X$ is said to have the \emph{Uniform Kadec-Klee property} if the following holds true: for any $\varepsilon>0$, there exists $\delta>0$ such that, whenever $(x_n)$ is an $\varepsilon\,$-$\,$separated sequence in $B_X$ such that $x_n\xrightarrow{w} x\in X$, it follows that $\Vert x\Vert < 1-\delta$. (This property was introduced by Huff \cite{Hu}.) For example, any uniformly convex space is UKK. Also, if $T\in\mathcal B(X)$, let us say that a vector $z\in X\setminus\{ 0\}$ is \emph{$(1-\delta)\,$-$\,$norming} for $T$ is $\Vert Tz\Vert > (1-\delta) \Vert T\Vert \Vert z\Vert$. We denote by $\mathcal N_\delta(T)$ the set of all $(1-\delta)\,$-$\,$norming vectors for $T$.

\bpr\label{useless} Assume that $X$ is reflexive with the Uniform Kadec-Klee property, and let $T\in \bbx$. Assume that $\Vert T\Vert =1$ and that there exists some constant $c>0$ such that, for every $\delta >0$, the set $\overline{\rm conv}\,\bigl( \mathcal N_\delta(T)\cap S_X\bigr)$ contains the ball $cB_X$. Then, $T\in\mathcal C(\emph{\wot},\emph{\sot})$.
\epr
\bpf Let $(T_n)$ be a sequence in $\bbx$ such that $T_n\xrightarrow{\wot} T$, and let $x\in X$. We have to show that $T_nx\xrightarrow{\Vert\,\cdot\,\Vert} Tx$. So we fix $\alpha >0$, and we intend to show that $\Vert T_nx-Tx\Vert \leq\alpha$ for all large enough $n$. Moreover, we may assume that $\Vert x\Vert\leq 1$.

\begin{claim}\label{yan} Let $\varepsilon >0$, and let $\delta=\delta(\varepsilon)$ be given by the UKK property. For any $z\in\mathcal N_\delta(T)\cap S_X$, we have $\overline{\,\lim}\, \Vert T_n z-Tz\Vert \leq\varepsilon$.  
\end{claim}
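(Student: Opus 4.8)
I would prove Claim~\ref{yan} by contradiction, using the Uniform Kadec--Klee property precisely at the parameter $\varepsilon$. Fix $z\in\mathcal N_\delta(T)\cap S_X$; then $\Vert z\Vert=1$, and since $\Vert T\Vert=1$ we have $\Vert Tz\Vert>(1-\delta)\Vert T\Vert\Vert z\Vert=1-\delta$. Recall that $T_n\xrightarrow{\wot}T$, so $T_nz\xrightarrow{w}Tz$; and since $T$ and all $T_n$ lie in $\bbx$, the vectors $Tz$ and $T_nz$ all lie in $B_X$. Suppose, towards a contradiction, that $\overline{\,\lim}\,\Vert T_nz-Tz\Vert>\varepsilon$. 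Extracting a subsequence we may assume $\Vert T_{n_k}z-Tz\Vert\to L$ with $L>\varepsilon$; setting $\rho:=(L-\varepsilon)/2>0$ and discarding finitely many terms, we get $\Vert T_{n_k}z-Tz\Vert\geq\varepsilon+\rho$ for all $k$.

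The crucial step is to extract from $(T_{n_k}z)_k$ a further subsequence which is $\varepsilon$-separated. This rests on the elementary observation that if $(y_k)$ is a sequence with $y_k\xrightarrow{w}y$ and $\Vert y_k-y_m\Vert<\varepsilon$ for all $k$ in some infinite set $S$ (with $y_m$ fixed), then weak lower semicontinuity of the norm forces $\Vert y-y_m\Vert\leq\liminf_{k\in S}\Vert y_k-y_m\Vert\leq\varepsilon$. Applied with $y=Tz$, $y_k=T_{n_k}z$ and $y_m=T_{n_{k_0}}z$ (for which $\Vert y-y_m\Vert\geq\varepsilon+\rho>\varepsilon$), this shows that for every $k_0$ and every infinite $S$, the set $\{k\in S;\ \Vert T_{n_k}z-T_{n_{k_0}}z\Vert\geq\varepsilon\}$ is infinite. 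A routine diagonal construction (with $S_0:=\mathbb N$, and at step $j$ choosing $k_{j+1}>k_j$ in the infinite reservoir $S_j:=\{k\in S_{j-1};\ \Vert T_{n_k}z-T_{n_{k_j}}z\Vert\geq\varepsilon\}$) then yields indices $k_1<k_2<\cdots$ with $\Vert T_{n_{k_i}}z-T_{n_{k_j}}z\Vert\geq\varepsilon$ for all $i\neq j$.

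To finish, consider the sequence $Tz,\,T_{n_{k_1}}z,\,T_{n_{k_2}}z,\dots$. Its terms lie in $B_X$, they are pairwise at distance $\geq\varepsilon$ (the $T_{n_{k_j}}z$ among themselves by construction, and each of them from $Tz$ since that distance is $\geq\varepsilon+\rho$), and the sequence converges weakly to $Tz$. By the UKK property at parameter $\varepsilon$ --- which is exactly the one that produced $\delta$ --- this forces $\Vert Tz\Vert<1-\delta$, contradicting $\Vert Tz\Vert>1-\delta$ obtained above. Hence $\overline{\,\lim}\,\Vert T_nz-Tz\Vert\leq\varepsilon$, which is Claim~\ref{yan}.

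The only non-routine ingredient is the extraction of the $\varepsilon$-separated subsequence; this is really the (well-known) passage between the ``separated-sequence'' formulation of UKK used in the statement and the equivalent formulation ``$x_n\xrightarrow{w}x$ in $B_X$ with $\Vert x\Vert>1-\delta$ implies $\limsup_n\Vert x_n-x\Vert\leq\varepsilon$''. Once that is in place, everything else is bookkeeping with $\Vert T\Vert=1$, $\Vert T_n\Vert\leq1$ and $T_nz\xrightarrow{w}Tz$.
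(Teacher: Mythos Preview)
Your proof is correct and follows essentially the same idea as the paper's: both hinge on the fact that since $\Vert Tz\Vert>1-\delta$, the UKK property forbids any $\varepsilon$-separated subsequence of $(T_nz)$. The only organizational difference is that the paper argues contrapositively---no $\varepsilon$-separated subsequence, hence (via an implicit Ramsey step) a subsequence with all pairwise distances $<\varepsilon$, hence $\underline{\lim}\,\Vert T_nz-Tz\Vert\leq\varepsilon$ by weak lower semicontinuity, and then passes to $\overline{\lim}$ by applying this to every subsequence---whereas you go straight for the contradiction, using weak lower semicontinuity to build the $\varepsilon$-separated subsequence directly. Your route is arguably a bit more transparent, as it avoids the Ramsey-type extraction; but the two arguments are really the same proof read in opposite directions.
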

\begin{proof}[Proof of Claim \ref{yan}]  Since $z\in\mathcal N_\delta(T)\cap S_X$, we have $\Vert Tz\Vert > 1-\delta$; and since $T_nz\xrightarrow{w} Tz$, it follows that the sequence $(T_nz)$ has no 
$\varepsilon\,$-$\,$separated subsequence. From that, it is not hard to deduce that one can find an increasing sequence of integers $(n_k)_{k\geq 0}$ such that $\Vert T_{n_k}z-T_{n_{k'}}z\Vert <\varepsilon$ for every $k,k'\geq 0$. Fixing $k$ and letting $k'\to\infty$, we get $\Vert T_{n_k} z-Tz\Vert\leq \varepsilon$ for all $k\geq 0$. So we have $\underline{\,\lim}\, \Vert T_n-Tz\Vert \leq \varepsilon$; and since this can be applied to any subsequence of $(T_n)$, the result follows. 
\epf
Now, let us choose $\varepsilon :=\alpha c/3$, and let $\delta$ be given by the UKK property.  By assumption on $T$, one can find  $u\in X$ which is a convex combination of vectors $z_1,\dots ,z_N\in \mathcal N_\delta(T)\cap S_X$ such that $\Vert u- cx\Vert<\alpha c/3$. By Claim \ref{yan}, we have $\Vert T_nz_i- Tz_i\Vert \leq \varepsilon=\alpha c/3$ for $i=1,\dots ,N$ if $n$ is large enough; and then $\Vert T_nu-Tu\Vert \leq \alpha c/3$ by convexity. By the triangle inequality and since $\Vert T\Vert, \Vert T_n\Vert\leq 1$, it follows that $\Vert T_n(cx)-T(cx)\Vert \leq \alpha c$ for all large enough $n$; so $\Vert T_nx -Tx\Vert \leq \alpha$, as required.
\epf

\begin{remark*} By the Hahn-Banach theorem and since $\mathcal N_\delta(T)$ is balanced for any $\delta>0$, the assumption in Proposition \ref{useless} can be written as follows: 
\[ \inf_{x^*\in S_{X^*}} \inf_{\delta >0}  \sup_{ z\in\mathcal N_\delta(T)\cap S_X}  \vert \langle x^*, z\rangle\vert >0.\]
\end{remark*}

\smallskip
We conclude this sub-section with a rather natural question. As observed in \cite[Proposition 2.6]{Kan}, if the Banach space $X$ is strictly convex then any operator $T\in\bbx$ such that $\Vert T\Vert=1$ and $\overline{{\rm span}}\bigl( \mathcal N(T)\bigr)=X$ (in particular, any isometry) has to be an \emph{extreme point} of $\bbx$. In fact, without any assumption on $X$, any $T\in\bbx$ such that $\Vert T\Vert=1$,  $\mathcal N(T)\cap S_X\subseteq {\rm Ext}(B_X)$ and  $\overline{{\rm span}}\bigl( \mathcal N(T)\bigr)=X$ is an extreme point of $\bbx$. By Proposition \ref{CPH}, it follows that for $X=\ell_2$, any $T\in\mathcal C(\wot,\sot)$ is an extreme point of $\bbx$. Also, it is easy to see that $\mathcal C(\wot,\sot)$ is an extreme subset of $\bbx$, for any $X$: if $A,B\in\bbx$ and $\frac{A+B}2\in\mathcal C(\wot,\sot)$, then $A\in \mathcal C(\wot,\sot)$ and $B\in \mathcal C(\wot,\sot)$. This motivates the following question.\begin{question} For which reflexive spaces $X$ is it true that every $T\in \mathcal C(\wot,\sot)$ is an extreme point of $\bbx$?
\end{question}

\subsection{Typical properties related to Fredholm theory} In this final sub-section, we gather a few remarks and some questions related to Lemma \ref{Fredholm}. Since we consider spectral properties, we assume that $\K=\C$. We begin by observing that the \sote\ version of {Lemma \ref{Fredholm} holds true if $X$ is reflexive and has the MAP}. In other words:
\begin{fact} If $X$ is reflexive and has the MAP then a typical $T\in(\bbx,\sote)$ is not upper semi-Fredholm.
\end{fact}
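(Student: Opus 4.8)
The plan is to imitate the proof of Lemma~\ref{Fredholm}, the only missing ingredient being the $\sote$-density (rather than merely the $\sot$-density) of the finite-rank contractions in $\bbx$. Granting this density, the argument runs exactly as before: for a finite-dimensional $E\subseteq X$ and $\varepsilon>0$ the set $\{T\in\bbx:\Vert T_{|E}\Vert<\varepsilon\}$ is $\sot$-open — as observed in the proof of Fact~\ref{EMn} — hence $\sote$-open since $\sot\subseteq\sote$, so
\[
\mathcal G:=\bigl\{T\in\bbx:\ \forall k\in\N\ \forall n\in\N\ \exists E\ \text{with}\ \dim E>n\ \text{and}\ \Vert T_{|E}\Vert<1/k\bigr\}
\]
is $\sote$-$G_\delta$; it is $\sote$-dense because it contains every finite-rank contraction $R$ (as $\ker R$ has finite codimension, hence is infinite-dimensional, we get $\Vert R_{|E}\Vert=0$ for subspaces $E\subseteq\ker R$ of arbitrarily large finite dimension). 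Since $(\bbx,\sote)$ is Polish ($X^*$ being separable), $\mathcal G$ is $\sote$-comeager. Finally no $T\in\mathcal G$ is upper semi-Fredholm: if such a $T$ had closed range and $\dim\ker T=n<\infty$, then, writing $X=\ker T\oplus G$, one would have $\Vert Tx\Vert\ge c\Vert x\Vert$ on $G$ for some $c>0$; picking $E$ with $\dim E>n$ and $\Vert T_{|E}\Vert<c$ forces $E\cap G\neq\{0\}$ (since ${\rm codim}\,G=n<\dim E$), whence $c\Vert x\Vert\le\Vert Tx\Vert<c\Vert x\Vert$ for a nonzero $x\in E\cap G$, a contradiction.

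So everything comes down to the $\sote$-density of the finite-rank contractions. The plan is to establish first, using only that $X$ is reflexive, separable and has the MAP, that there is a sequence $(R_n)$ of finite-rank operators with $\Vert R_n\Vert\le1$ such that $R_n\xrightarrow{\sot}I_X$ \emph{and} $R_n^*\xrightarrow{\sot}I_{X^*}$. Once this is available, for any $T\in\bbx$ the operators $R_nTR_n$ are finite-rank contractions converging to $T$ in $\sote$: indeed $\Vert R_nTR_nx-Tx\Vert\le\Vert TR_nx-Tx\Vert+\Vert R_n(Tx)-Tx\Vert\to0$ for every $x\in X$, and symmetrically $(R_nTR_n)^*x^*=R_n^*T^*R_n^*x^*\to T^*x^*$ for every $x^*\in X^*$.

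To build $(R_n)$, I would start from a sequence $(R_n^0)$ of finite-rank contractions on $X$ with $R_n^0x\to x$ for every $x$, furnished by the MAP. For each $x^*\in X^*$ one has $\langle (R_n^0)^*x^*,x\rangle=\langle x^*,R_n^0x\rangle\to\langle x^*,x\rangle$, so $(R_n^0)^*x^*\to x^*$ in the weak$^*$ topology of $X^*$, which coincides with the weak topology since $X$ is reflexive. Now fix countable dense sets $(x_i)_{i\ge1}$ in $X$ and $(x_j^*)_{j\ge1}$ in $X^*$ and construct finite convex combinations $R_k:=\sum_{n\in F_k}\lambda_n^{(k)}R_n^0$ inductively: taking $\min F_k$ larger than $\max F_{k-1}$ and so large that $\Vert R_n^0x_i-x_i\Vert<1/k$ for all $n\in F_k$ and all $i\le k$ automatically gives $\Vert R_kx_i-x_i\Vert<1/k$ for $i\le k$, while applying Mazur's theorem in $(X^*)^k$ to the weakly null family $\bigl(((R_n^0)^*x_j^*-x_j^*)_{j\le k}\bigr)_{n}$ (restricted to a high enough tail) lets one choose $F_k$ and the weights so that also $\Vert R_k^*x_j^*-x_j^*\Vert<1/k$ for $j\le k$. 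Each $R_k$ is a finite-rank contraction, and density of $(x_i)$, $(x_j^*)$ together with $\Vert R_k\Vert=\Vert R_k^*\Vert\le1$ yields $R_k\xrightarrow{\sot}I_X$ and $R_k^*\xrightarrow{\sot}I_{X^*}$.

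The crux is exactly this construction of a \emph{single} approximating sequence whose adjoints also converge strongly: from the MAP of $X$ alone one only gets weak convergence of the $(R_n^0)^*$, and naive compositions of two approximating sequences (one controlling the values, the other the adjoints) fail because each controls its own side only weakly — the Mazur/diagonal passage is precisely what repairs this. Everything else is a routine transcription of the $\sot$ argument of Lemma~\ref{Fredholm}.
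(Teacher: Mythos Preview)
Your proof is correct and follows essentially the same route as the paper: reduce to the $\sote$-density of finite-rank contractions, and obtain this by producing a sequence $(R_n)$ of finite-rank contractions with $R_n\xrightarrow{\sote}I_X$ via a Mazur argument applied to tails of a MAP approximating sequence. The paper's version is slightly more compact (it applies Mazur once in $(\mathcal B_1(X^*),\sot)$ rather than in each $(X^*)^k$, and simply notes that $R_nT$ already suffices in place of $R_nTR_n$), but the substance is identical.
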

\bpf  By the proof of Lemma \ref{Fredholm}, it is in fact enough to show that the finite rank contraction operators are \sote-$\,$dense in $\bbx$; and this will follow if we can show that there exists a sequence of finite rank operators $(R_n)\subseteq \bbx$ such that $R_n\xrightarrow{\sote} I_X$. 

\smallskip
Since $X$ has the MAP, there is a sequence of finite rank operators $(A_k)\subseteq\bbx$ such that $A_k\xrightarrow{\sot} I_X$. Then $A_k\xrightarrow{\wot} I_X$, hence $A_k^*\xrightarrow{\wot}I_{X^*}$ since $X$ is reflexive. By Mazur's Theorem, it follows that $I_{X^*}$ belongs to the \sot$\,$-$\,$closure of the set $\bigl\{ A_k^*;\; k\geq n\bigr\}$, for any $n\in\N$. So, by metrizability of $\bigl( \mathcal B_1(X^*),\sot\bigr)$, one can find a sequence $(S_n)$ of convex combinations of the $A_k^*$ such that $S_n\xrightarrow{\sot} I_{X^*}$ and $S_n\in {\rm conv}\,\bigl\{ A_k^*;\; k\geq n\bigr\}$ for all $n\in\N$. The sequence of finite-rank operators $(R_n)$ defined by $R_n:=S_n^*$ is then such that $R_n\xrightarrow{\sot} I_X$ (because $A_k\xrightarrow{\sot} I_X$) and $R_n^*\xrightarrow{\sot} I_{X^*}$.
\epf

\smallskip The next proposition shows in particular that Lemma \ref{Fredholm} can be improved when $X$ is an $\ell_p\,$-$\,$direct sum or a $c_0\,$-$\,$direct sum of finite-dimensional spaces..
\bpr\label{sigmae} Assume that there exists a sequence $(P_N)_{N\geq 0}$ of finite rank $M$-$\,$like projections on $X$ such that $P_N\xrightarrow{\emph{\sot}}I$. Then a typical $T\in(\bbx,\emph{\sot})$ is such that $T-\lambda I$ is not upper semi-Fredholm for any $\lambda\in\overline{\,\D}$. In particular, the essential spectrum of a typical $T\in\bbx$ is equal to $\overline{\,\D}$.
\epr
\bpf We use an improved version of Fact \ref{EMn}.
\begin{fact}\label{EMnbis} Let $\lambda\in\overline{\,\D}$. A typical $T\in\bbx$ has the following property: for any $\varepsilon>0$ and any $n\in\N$, there exists a subspace $E\subseteq X$ with $\dim(E)> n$ such that $\Vert (T-\lambda I)_{| E}\Vert<\varepsilon$. 
\end{fact}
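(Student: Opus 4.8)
The plan is to run the proof of Fact~\ref{EMn} again, dragging the spectral parameter $\lambda$ along; this causes no difficulty precisely because $|\lambda|\le 1$. We work under the standing hypothesis of Proposition~\ref{sigmae}, namely that there is a sequence $(P_N)_{N\ge 0}$ of finite rank $M$-$\,$like projections with $P_N\xrightarrow{\sot}I$. Fix $\lambda\in\overline{\,\D}$ and let $\mathcal A_\lambda\subseteq\bbx$ be the set of operators with the stated property. First I would observe that $\mathcal A_\lambda$ is $\sot$-$G_\delta$: since $\Vert(T-\lambda I)_{| E'}\Vert\le\Vert(T-\lambda I)_{| E}\Vert$ whenever $E'\subseteq E$, one has
\[
\mathcal A_\lambda=\bigcap_{n\in\N}\ \bigcap_{k\in\N}\ \bigcup_{E\,:\,\dim E=n+1}\{\,T\in\bbx;\ \Vert(T-\lambda I)_{| E}\Vert<1/k\,\},
\]
and for each fixed finite-dimensional $E$ the set $\{T\in\bbx;\ \Vert(T-\lambda I)_{| E}\Vert<1/k\}$ is $\sot$-open in $\bbx$, exactly as in Fact~\ref{EMn}: if $f_1,\dots,f_d$ spans $E$, then $T\mapsto(T-\lambda I)_{| E}$ is continuous from $(\bbx,\sot)$ to $\mathcal B(E,X)$ since $E$ is finite-dimensional, so $T\mapsto\Vert(T-\lambda I)_{| E}\Vert$ is $\sot$-continuous. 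A union of open sets is open and a countable intersection of open sets is $G_\delta$, which gives the claim.

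The substance is the $\sot$-density of $\mathcal A_\lambda$. Let $A\in\bbx$ and let $\mathcal U$ be a basic $\sot$-neighbourhood of $A$, determined by $x_1,\dots,x_m\in X$ and $\varepsilon_0>0$. Using $P_N\xrightarrow{\sot}I$, choose $N$ so large that $\Vert P_Nx_i-x_i\Vert<\varepsilon_0/3$ and $\Vert P_N(Ax_i)-Ax_i\Vert<\varepsilon_0/3$ for $i=1,\dots,m$. Put $B:=AP_N+\lambda(I-P_N)$ and
\[
T:=P_NBP_N+(I-P_N)B(I-P_N)=P_NAP_N+\lambda(I-P_N).
\]
Since $P_N$ is $M$-$\,$like, $\Vert T\Vert=\max\bigl(\Vert P_NAP_N\Vert,|\lambda|\bigr)\le 1$, so $T\in\bbx$. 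For each $i$, writing $Tx_i-Ax_i=P_NA(P_Nx_i-x_i)+(P_N(Ax_i)-Ax_i)+\lambda(x_i-P_Nx_i)$ and using $\Vert P_N\Vert=1=\Vert I-P_N\Vert$ and $|\lambda|\le 1$, one gets $\Vert Tx_i-Ax_i\Vert<\varepsilon_0$, i.e. $T\in\mathcal U$. Finally $T-\lambda I=P_N(A-\lambda I)P_N$, so $(T-\lambda I)_{|(I-P_N)X}=0$; as $X$ is infinite-dimensional and $P_N$ has finite rank, $(I-P_N)X$ is infinite-dimensional, hence for every $\varepsilon>0$ and every $n\in\N$ any subspace $E\subseteq(I-P_N)X$ with $\dim E>n$ witnesses the property. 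Thus $T\in\mathcal A_\lambda\cap\mathcal U$, so $\mathcal A_\lambda$ is $\sot$-dense, and the Baire Category Theorem gives that $\mathcal A_\lambda$ is comeager in $(\bbx,\sot)$.

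The argument is essentially routine once the $M$-$\,$like projections are available; the only step that deserves care is the density statement, where the $M$-$\,$like property is exactly what keeps the block-diagonal operator $T$ inside the unit ball, and where the hypothesis $\lambda\in\overline{\,\D}$ is used in an essential way (for $|\lambda|>1$ this construction leaves $\bbx$, as it must, since then $\lambda$ cannot belong to the spectrum of any contraction).
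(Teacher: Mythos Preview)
Your proof is correct and follows essentially the same approach as the paper: both construct the approximant $T=P_NAP_N+\lambda(I-P_N)$, use the $M$-$\,$like property of $P_N$ to ensure $\Vert T\Vert\le 1$, and observe that $T\equiv\lambda I$ on the infinite-dimensional space $\ker(P_N)=(I-P_N)X$. The paper is more terse, simply asserting that the set is $\sot$-$G_\delta$ and that $T_N\xrightarrow{\sot}A$, whereas you spell out both the $G_\delta$ structure and the $\sot$-approximation estimate explicitly; the intermediate operator $B$ you introduce is unnecessary but harmless, as it yields the same $T$.
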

\begin{proof}[\it Proof of Fact \ref{EMnbis}] Let us denote by $\mathcal G$ the set of all $T\in\bbx$ with the above property. The set $\mathcal G$ is \sot$\,$-$\,\gd$; so we just have to check that $\mathcal G$ is \sot$\,$-$\,$dense in $\bbx$. Let $A\in\bbx$ be arbitrary, and for any $N\geq 0$, set $T_N:= P_NAP_N+\lambda (I-P_N)$. Since the projection $P_N$ is $M$-$\,$like, we see that $T_N\in\bbx$; and $T_N\xrightarrow{\sot} A$. Moreover, we have $T_N\equiv \lambda I$ on $E:=\ker(P_N)$, so $T\in\mathcal G$.
\epf

It follows from Fact \ref{EMnbis} that for any fixed $\lambda\in\overline{\,\D}$, a typical $T\in\bbx$ is such that $T-\lambda I$ is not upper semi-Fredholm. Now, let $\Lambda$ be a countable dense subset of $\overline{\,\D}$. By the Baire Category Theorem, a typical $T\in\bbx$ is such that $T-\lambda I$ is not upper semi-Fredholm for any $\lambda\in\Lambda$. Since the set of all $\lambda\in\C$ such that $T-\lambda I$ is not upper semi-Fredholm is closed in $\C$, the result follows.
\epf

\begin{remark} The \sote\ version of Proposition \ref{sigmae} holds true (with exactly the same proof) if one assumes that $P_N\xrightarrow{\sote} I$. This applies in particular to  $X=c_0$ or $\ell_p$, $1<p<\infty$.
\end{remark}

\begin{remark} Let $X=\ell_1$. It is proved in \cite{GMM2} that a typical $T\in(\bbx, \sot)$ has the following two properties: $T^*$ is an isometry, and every $\lambda\in\D$ is an eigenvalue of $T$ of infinite multiplicity. The proof of the  first property is rather simple, but the proof of the second property given in \cite{GMM2} is a bit technical. By Proposition \ref{sigmae}, we now see that the typicality of the second property actually \emph{follows} from the typicality of the first one. Indeed, if $T^*$ is an isometry, then $T-\lambda I$ is surjective for every $\lambda\in\D$ since $(T-\lambda I)^*$ is an embedding, and hence $\dim \ker(T-\lambda I)=\infty$ if $T-\lambda I$ is not Fredholm.
\end{remark}

\smallskip Assume that the Banach space $X$ has the MAP. As already pointed out, it follows from Lemma \ref{Fredholm} that if a typical $T\in\bbx$ has closed range, then a typical $T\in\bbx$ has an infinite-dimensional kernel. As the next remark shows, saying that a typical $T\in\bbx$ has closed range is in fact the same as saying that  a typical $T\in\bbx$ is surjective.
\begin{remark} If $X$ has the MAP, then a typical $T\in\bigl(\bbx, \sot\bigr)$ has dense range. Consequently, a typical $T\in\bbx$ has closed range if and only if a typical $T\in\bbx$ is surjective.
\end{remark}
\bpf  For any $z\in X$, set $ \mathcal G_z:=\bigl\{ T\in\bbx;\; z\in \overline{\,{\rm ran}(T)}\bigr\}$. Let also $D$ be a countable dense subset of $X$. Then an operator $T\in\bbx$ has dense range if and only if $T\in\bigcap_{z\in D} \mathcal G_z$. Moreover, each set $\mathcal G_z$ is easily seen to be \sot$\,$-$\,\gd$ in $\bbx$. So it is enough to show that $\mathcal G_z$ is \sot$\,$-$\,$dense in $\bbx$, for any $z\in X$.

Let $\mathcal U$ be a non-empty open set in $(\bbx, \sot)$. Since $X$ has the MAP, one can find a finite rank operator $A$ such that $A\in\mathcal U$ and $\Vert A\Vert <1$. Since $\dim(X)=\infty$, we can choose $x\in \ker(A)$ with $x\neq 0$, and then $x^*\in X^*$ such that $\pss{x^*}{x}=1$. Let $\varepsilon >0$, and set $T:= A+\varepsilon\, x^*\otimes z$. If $\varepsilon$ is small enough, then $\Vert T\Vert\leq 1$ and $T\in\mathcal U$. Moreover, since $Tx=\varepsilon z$, we have $z\in{\rm ran}(T)$ and hence $T_N\in\mathcal G_z$.
\epf

We finish the paper with the following two questions:

\begin{question}\label{qsurject} {Is a typical $T\in(\bbx,\sot)$ surjective when $X=\ell_p$, $1<p<2$?}
\end{question}

We have stated Question \ref{qsurject} for $1<p<2$ only because the answer is already known for all other values of $p$. Indeed, the answer is ``Yes'' for $X=\ell_2$ by \cite{EM}, and also ``Yes'' for $X=\ell_1$ by \cite[Theorem 4.1]{GMM2}. On the other hand, since a typical $T\in (\bbx,\sot)$ is not invertible (by \cite[Proposition 2.1]{GMM2}, see also Proposition \ref{sigmae}), the answer is ``No'' if $X=\ell_p$, $p>2$ or $c_0$, because a typical $T\in(\bbx,\sot)$ is one-to-one by \cite[Theorem 2.3]{GMM2}. The answer is also ``No'' for the topology \sote\ and any $1<p<\infty$, for the same reason. It follows from Theorem \ref{toutcapourca} that the answer is ``No'' as well for the topology \sot\ if $\K=\R$ and $p=3/2$.

\smallskip
\begin{question} Assume that $X$ has the MAP. Is it true that a typical $T\in\bigl(\bbx,\sot\bigr)$ is such that $T-\lambda I$ has dense range  for every $\lambda\in\C$?
\end{question}

%
%
%
%
%
%
%
%
%
%
%
%
%
%
\bigskip\noindent
\[ \hbox{{\bf Competing interests}: The authors declare none.}\]

\begin{bibdiv}
  \begin{biblist}
  
 \bib{Andri}{article}{

AUTHOR = {D. Andrijevi\'c},
     TITLE = {A note on $\alpha\,$-$\,$equivalent topologies},
   JOURNAL = {Math. Vesnik},
    VOLUME = {45},
      YEAR = {1993},
     PAGES = {65--69},

}

\bib{AH}{article}{
   author={Argyros, Spiros A.},
   author={Haydon, Richard G.},
   title={A hereditarily indecomposable $\scr L_\infty$-space that solves
   the scalar-plus-compact problem},
   journal={Acta Math.},
   volume={206},
   date={2011},
   number={1},
   pages={1--54},
   
}

 \bib{Lodz}{article}{

AUTHOR = {A. Bartoszewicz},
AUTHOR = {M. Filipczak},
AUTHOR = {A. Kowalski},
AUTHOR = {M. Terepeta},
     TITLE = {On similarity between topologies},
   JOURNAL = {Cent. Eur. J. Math.},
    VOLUME = {124},
      YEAR = {2014},
     PAGES = {603--610},

}

\bib{B&al}{book}{
AUTHOR = {E. Behrends},
AUTHOR = {R. Danckwerts},
AUTHOR = {R. Evans},
AUTHOR = {S. G\"obel},
AUTHOR = {K. Meyfarth},
AUTHOR = {W. M\"uller},
     TITLE = {$L_p$-structure in real Banach spaces},
    SERIES = {Lecture Notes in Mathematics},
    VOLUME = {613},
 PUBLISHER = {Springer},
      YEAR = {1977},

}

\bib{B}{article}{
 author={J. Bourgain},
    title={On dentability and the Bishop-Phelps property},
   journal={Israel J. Math.},
   volume={28},
   date={1977},
   pages={265--271},
}

\bib{BCP}{article}{
  author={Brown, S.},
  author={Chevreau, B.},
  author={Pearcy, C.},
  title={Contractions with rich spectrum have invariant subspaces},
  journal={J. Operator Theory},
 volume={1},
  date={1979},
  number={1},
  pages={123--136},
  issn={0379-4024},
  review={\MR{526294}},
}

\bib{CM}{article}{
 author={H. H. Corson},
 author={E. Michael},
    title={Metrizability of certain countable unions},
   journal={Illinois J. Math.},
   volume={8},
   date={1964},
   pages={351--360},
}

\bib{Csa}{article}{
 author={{\'A}. Cs{\' a}sz{\' a}r},
    title={Generalized open sets},
   journal={Acta Math. Hungar.},
   volume={75},
   date={1997},
   pages={65--87},
}

\bib{E}{article}{
AUTHOR = {T. Eisner},
     TITLE = {A ``typical'' contraction is unitary},
   JOURNAL = {Enseign. Math. (2)},
  JOURNAL = {L'Enseignement Math\'{e}matique. Revue Internationale. 2e S\'{e}rie},
    VOLUME = {56},
      YEAR = {2010},
     PAGES = {403--410},

}

 \bib{EM}{article}{

AUTHOR = {T. Eisner and T. M\'{a}trai},
     TITLE = {On typical properties of {H}ilbert space operators},
   JOURNAL = {Israel J. Math.},
  JOURNAL = {Israel Journal of Mathematics},
    VOLUME = {195},
      YEAR = {2013},
     PAGES = {247--281},

}

  \bib{Enflo}{article}{
   author={Enflo, Per},
   title={On the invariant subspace problem for Banach spaces},
   journal={Acta Math.},
   volume={158},
   date={1987},
   number={3-4},
   pages={213--313},
   
}

\bib{GM}{article}{
   author={Grivaux, S.},
   author={Matheron, \'{E}.},
   title={Local Spectral Properties of Typical Contractions on
   $\ell_p$-Spaces},
   journal={Anal. Math.},
   volume={48},
   date={2022},
   number={3},
   pages={755--778},
}

  \bib{GMM1}{article}{
    author={Grivaux, S.},
   author={Matheron, \'{E}.},
   author={Menet, Q.},
   title={Linear dynamical systems on Hilbert spaces: typical properties and
   explicit examples},
   journal={Mem. Amer. Math. Soc.},
   volume={269},
   date={2021},
   number={1315},
   pages={v+147},
}

\bib{GMM2}{article}{
   author={Grivaux, Sophie},
   author={Matheron, \'{E}tienne},
   author={Menet, Quentin},
   title={Does a typical $\ell_p$-space contraction have a non-trivial
   invariant subspace?},
   journal={Trans. Amer. Math. Soc.},
   volume={374},
   date={2021},
   number={10},
   pages={7359--7410},

}

\bib{GR}{article}{
   author={Grivaux, Sophie},
   author={Roginskaya, Maria},
   title={A general approach to Read's type constructions of operators
   without non-trivial invariant closed subspaces},
   journal={Proc. Lond. Math. Soc. (3)},
   volume={109},
   date={2014},
   number={3},
   pages={596--652},
   
}

\bib{Hol}{article}{
 author={J. R. Holub},
    title={Reflexivity of $L(E,F)$},
   journal={Proc. Amer. Math. Soc.},
   volume={34},
   date={1972},
   number={1},
   pages={175--177},
}
\bib{Hu}{article}{
 author={R. Huff},
    title={Banach spaces which are nearly uniformly convex},
   journal={Rocky Mountain J. Math.},
   volume={10},
   date={1980},
   number={4},
   pages={743--749},
}

\bib{JMR}{article}{
 author={M. Jung},
 author={M. Mart\'in},
 author={A. Rueda Zoca},
    title={Residuality in the set of norm-attaining operators between Banach spaces},
   journal={Preprint},
   date={2022},
}

\bib{Kan}{article}{
 author={C.-H. Kan},
    title={A class of $L_p$ contractions, $p\neq 1,2,\infty$, and real $2\times 2$ extreme matrices},
   journal={Illinois J. Math.},
   volume={30},
   date={1986},
   number={4},
   pages={612--635},
}

\bib{Ke}{book}{
AUTHOR = {A. S. Kechris},
     TITLE = {Classical descriptive set theory},
    SERIES = {Graduate Texts in Mathematics},
    VOLUME = {156},
 PUBLISHER = {Springer},
      YEAR = {1995},

}

 \bib{Levine}{article}{

AUTHOR = {N. Levine},
     TITLE = {Semi-open sets and semi-continuity in topological spaces},
   JOURNAL = {Amer. Math. Monthly},
    VOLUME = {70},
      YEAR = {1963},
   NUMBER = {1},
     PAGES = {36--41},
}

 \bib{Lind}{article}{

AUTHOR = {J. Lindenstrauss},
     TITLE = {On operators which attain their norm},
   JOURNAL = {Israel J. Math.},
    VOLUME = {1},
      YEAR = {1963},
     PAGES = {139--148},
}

\bib{LT}{book}{
AUTHOR = {J. Lindenstrauss},
AUTHOR = {L. Tzafriri},
     TITLE = {Classical Banach spaces I},
    SERIES = {Ergebnisse der Mathematik und ihrer Grenzgebiete},
    VOLUME = {92},
 PUBLISHER = {Springer},
      YEAR = {1977},

}

\bib{Debm}{article}{
 author={K. Mandal},
 author={D. Sain},
author={A. Mal},
author={K. Paul},
 title={Norm attainment set and symmetricity of operators on $\ell_p^2$},
   journal={Adv. Oper. Theory},
   volume={7},
   date={2022},
   pages={article 3},
}

 \bib{Norway}{article}{

AUTHOR = {O. Nj$\mathring{\rm a}$stad},
     TITLE = {On some classes of nearly open sets},
   JOURNAL = {Pacific J. Math.},
    VOLUME = {15},
      YEAR = {1965},
   NUMBER = {3},
     PAGES = {961--970},
}

\bib{R1}{article}{
   author={Read, C. J.},
   title={A solution to the invariant subspace problem},
   journal={Bull. London Math. Soc.},
   volume={16},
   date={1984},
   number={4},
   pages={337--401},
}

\bib{R2}{article}{
   author={Read, C. J.},
   title={A solution to the invariant subspace problem on the space $l_1$},
   journal={Bull. London Math. Soc.},
   volume={17},
   date={1985},
   number={4},
   pages={305--317},
   
}

\bib{R3}{article}{
   author={Read, C. J.},
   title={The invariant subspace problem on some Banach spaces with
   separable dual},
   journal={Proc. London Math. Soc. (3)},
   volume={58},
   date={1989},
   number={3},
   pages={583--607},
   
}	

\bib{Debm2}{article}{
 author={D. Sain},
 title={On the norm attainment set of a bounded linear operator and semi-inner products in normed linear spaces},
   journal={Indian J. Pure Appl. Math},
   volume={51},
   date={2020},
   pages={179--186},
}

  \end{biblist}
\end{bibdiv}

\end{document}